\newenvironment{@abs}[1]{%
  \vspace{4pt}%\footnotesize
  \parindent 15pt {\bfseries #1. }\ignorespaces
     }
     {\par\vspace{7pt}}
\renewenvironment{abstract}{\begin{@abs}{{Abstract}}}{\end{@abs}}
\newenvironment{keywords}{\begin{@abs}{Key words}}{\end{@abs}}
\newenvironment{AMS}{\begin{@abs}{AMS subject classifications}}{\end{@abs}}
\newtheorem{theorem}{Theorem}% [section]
\newtheorem{corollary}{Corollary}
\newtheorem{lemma}[theorem]{Lemma}
\newtheorem{algo}[theorem]{Algorithm}
\newtheorem{testexample}{Test model specification}
\theoremstyle{remark}
\newtheorem{remark}{Remark}%[section]
\newcommand{\phik}{\varphi^{(k)}}
\newcommand{\Bk}{B^{(k)}}
\newcommand{\Bcjk}{B^{c,j,(k)}}
\newcommand{\Fk}{F^{(k)}}
\newcommand{\Fcjk}{F^{c,j,(k)}}
\newcommand{\deltax}{\delta_H}
\newcommand{\dx}{\text{ d}x}
\newcommand{\dy}{\text{ d}y}
\renewcommand{\dh}{\text{ d}h}
\newcommand{\dH}{\text{ d}\mathcal{H}^{d-1}}
\newcommand{\intO}{\int_{\Omega}}
\newcommand{\intG}{\int_{\Gamma_{N}}}
\newcommand{\dphi}{\dfrac{\text{d}}{\text{d}\varphi}}
\newcommand{\dphitwo}{\dfrac{\text{d}^2}{\text{d}\varphi^2}}
\newcommand{\Pad}{\Phi_{\text{ad}}}
\newcommand{\Hzero}{H^1_{0}(\Omega,\mathbb{R}^N)}
\newcommand{\R}{\mathbb{R}}
\newcommand{\N}{\mathbb{N}}
\newcommand{\HON}{H^1(\Omega)^N }
\newcommand{\LinfN}{L^\infty(\Omega)^N}
\newcommand{\mass}{\mathfrak{m}}
\newcommand{\G}{\mathbf{G}}
\def\fint#1{\strokedint#1}
\title{Multi-material structural optimization for additive manufacturing based on a phase field approach}
\author[1]{Luise Blank}
\author[1]{Maximilian Urmann}
\affil[1]{Fakultät für Mathematik, Universität Regensburg, 93040 Regensburg, Germany, email: \href{mailto:luise.blank@ur.de}{luise.blank@ur.de}, \href{mailto:maximilian.urmann@ur.de}{maximilian.urmann@ur.de}}
\date{}
\begin{document}

\maketitle

\begin{keywords}
  Shape and topology optimization,
  Phase field,
  Additive manufacturing,
  Numerical methods,
  Numerical simulations,
  Korn's inequality
\end{keywords}

\begin{AMS} 
  49J20, 
  49M41, 
  65K10, 
  74P05, 
  90C48, 
  90C53 
\end{AMS}

\begin{abstract}
A topology optimization problem in a phase field setting is considered to obtain rigid structures, which are resilient to external forces and constructable with additive manufacturing. 
Hence, large deformations of unsupported overhangs due to gravity shall be avoided during construction. 
The deformations depend on the stage of the construction and are modelled by linear elasticity equations on growing domains with height-dependent stress tensors and forces. 
Herewith, possible hardening effects can be included. 
Analytical results concerning the existence of minimizers and the differentiability of the reduced cost functional are presented in case of a finite number of construction layers. 
By proving Korn's inequality with a constant independent of the height, it is shown that the cost functional, formulated continuous in height, is well-defined. 
The problem is numerically solved using a projected gradient type method in function space, for which applicability is shown. 
Second order information can be included by adapting the underlying inner product in every iteration. Additional adjustments enhancing the solver's performance, such as a nested procedure and subsystem solver specifications, are stated. 
Numerical evidence is provided that for all discretization levels and also for any number of construction layers, the iteration numbers stay roughly constant. 
The benefits of the nested procedure as well as of the inclusion of second order information are illustrated. 
Furthermore, the choice of weights for the penalization of deformation during the construction is discussed. 
For various problem settings, results are presented for one or two materials and void in two as well as in three dimensions.
\end{abstract}

%\newpage

\section{Introduction}
\label{sec:Intro}
Structural shape and topology optimization aims to find an optimal material distribution in a fixed domain, such that a given cost functional is minimized while satisfying physical or geometrical constraints. 
Multiple different mathematical frameworks have been developed to approach these kinds of problems.
We refer to \cite{SigmundReview} for a general overview of this topic.
SIMP and RAMP methods, as described e.g. in \cite{BenSig03,RAMP}, use a homogenization approach introduced in \cite{bendsoe1988HomogenizationApproach} to alter a density variable to a 0--1 solution. 
In level set methods, the boundary of the material is tracked by a sharp interface given by the level set of a function, see e.g. \cite{Burger2003LevelSet,Wang2003LevelSet}. 
Topology changes are possible by including the topological derivative as performed in \cite{AllaireTopologicalDeriv,Burger2004TopologicalDerivative}. 
Instead of sharp transitions from one material to another, phase field approaches use diffuse interfaces.
This results in an optimization problem posed in a vector space.
Consequently, the phase field ansatz is capable of topology changes and of nucleation of new holes.
Limitations given for shape or topological derivatives are absent.
Also remeshing is not required.
Originally, phase field models are introduced to describe the phase evolution in binary alloys in \cite{AllenCahn,CahnHilliard}. 
For structural optimization, this approach is proposed in \cite{BourdinChambolle} and is widely used and extended to multi-material structural topology optimization, see e.g. \cite{ABCHRR,BFGS14,BurgerStainko,Dede2012,Garcke4DPrinting,Takezawa2010,Wang2004PhaseField} and references therein.
For numerical simulations, pseudo time stepping (discretized gradient flow) is typically employed, see e.g. \cite{CHEN2026SemiImpl,WANG2026Uncert}.
Nevertheless, for this method no convergence result is known for these kinds of problems.

In this article, we aim to optimize the shape and topology of rigid structures, which are resilient to external forces and also constructable with additive manufacturing techniques.
Additive manufacturing, shortly AM, refers to a set of construction methods that fabricate a component by decomposing it into layers which are produced one after the other.
For a general overview of different techniques, we refer to \cite{ReviewAM}.
A wide range of applications can be found in \cite{ReviewAMNgo} and in the references therein.
These techniques enable cost-effective production and the construction of topologies with higher complexity than it is possible with traditional manufacturing methods.
Hence, greater design flexibility and improved product quality is gained.
With AM, novel constraints arise, e.g. the producibility of overhangs.
A collection of challenges for the shape and topology optimization concerning AM is gathered in \cite{TrendsAM}.
A common strategy to ensure constructability is to penalize large overhang angles by including a suitable anisotropy. 
Here, side effects, such as the dripping effect or stretched interfaces, are observed, e.g. in \cite{AllaireOverview2,GLNS}.
A strategy to counter the dripping effect is proposed in \cite{PreventDripping} using additional filter techniques.
In \cite{gaynor2016topology}, a combination of RAMP and overhang projection is studied to avoid overhang angles exceeding a pre-set limit. 
A different approach that ensures stability during the manufacturing process is to construct removable supports alongside the main structure, e.g. see \cite{ReviewSupports}.
Removable supports are successfully incorporated in different manners:
In \cite{AllaireBogoselSupports1,Kuo2018support}, for a given desired structure, the optimal removable support is requested, while in \cite{AllaireBogoselSupports2,langelaar2018combined,MIRZENDEHDEL20161}, main structure and supports are optimized simultaneously.
One drawback of supports is the amount of additional material, which increases production costs.
Furthermore, the removal process is labour-intensive and carries the risk of damaging the fabricated component.

Although the work in this paper also includes the possibility of external supports, we aim to obtain structures that support their own weight throughout the production phase.
Rather than penalizing the geometry of overhangs itself, we penalize the deformations caused by gravity during construction.
That means
  we assess constructability by examining the
  deformations that occur while building the structure.
This method is introduced in a sharp interface setting in \cite{AllaireOverview1}, and numerically solved with a level set approach in \cite{AllaireOverview2}.
The ansatz is adapted to a diffuse interface framework and solved via discretized gradient flow, see \cite{WIAS}.

We generalize both approaches in a diffuse interface setting by considering multiple materials and by allowing height-dependent elasticity tensors.
Furthermore, we discuss possible choices of the employed weights in the cost functional.
As solver, a projected gradient type method is applied, for which a convergence result is given in \cite{VMPT}.

\medskip
This paper is organized as follows:
In the following section, we introduce the structural optimization problem including additive manufacturing.
The phase field ansatz and the corresponding linear elasticity equations for the displacement fields in a multiphase setting are presented.
To include possible hardening of the structure, the elasticity tensors may depend on the construction level and also on the spatial location.
The construction of the structure is considered with continuous growth in height, while the layer-by-layer ansatz is viewed as its discretization. 
For both cases, the optimization problem is formulated.
In Section \ref{sec:AnalysisOCP}, we provide analytical results for the optimization problem.
First, we show that the cost functional, which is reduced to the phase field variable, is well-defined for the layer-by-layer approach, and, under some mild additional assumptions, also for the formulation that is continuous in height.
Therefor, the necessary version of Korn's inequality is proved in the appendix. 
This also provides the boundedness of the cost functional independent of the number of employed layers.
Fréchet differentiability of the control-to-state operators for any construction level, and thereby of the reduced cost functional for the layer-by-layer approach, is shown.
The derivatives are listed for application in the numerical solver.
The numerical solution method is presented and discussed in Section \ref{sec:VMPT}.
More precisely, we cite the variable metric projection type method, shortly VMPT method, given in \cite{VMPT} and verify the assumptions on the optimization problem and the applied metrics for its convergence.
Further computational aspects enhancing the solver's performance, e.g. the use of second order information, the solver for the quadratic subproblem, and a nested algorithm, are discussed. Also the spatial discretization is given.
In Section \ref{sec:NumExp}, we present various numerical experiments.
The capabilities of the VMPT method are demonstrated with the independence of the number of layers and of the discretization level, as well as with the use of second order information.
Furthermore, the drastic speed-up of utilizing the nested algorithm is presented.
A parameter study investigates the impact of the choice of weights on the obtained solution. 
Additional variations of the model, such as building plate locations and hardening materials, are considered.
Structural optimization with three materials is performed.
Using this multiphase setting, also the inclusion of removable supports into the optimization process is studied.
Finally, solutions in three dimensions are presented and discussed.
\section{The optimization problem}
\label{sec:optProb}
The structure is modelled using a phase field approach with diffuse interfaces between the materials, as this allows for topological changes during the optimization process.
The use of phase fields in topology optimization was introduced in \cite{BourdinChambolle}.
In a design domain $\Omega\subseteq \R^d$, $d\in \{2,3\}$, with a piecewise Lipschitz boundary $\Gamma = \partial\Omega $, a phase field $\varphi:\Omega\to\R^N$ with $\varphi\geq 0$ and $\sum_{i=1}^N \varphi_i = 1$ is employed to describe the distribution of $N$ materials. 
The $i-$th component $\varphi_i(x)$ denotes the fraction in which the $i-$th material is present at $x\in\Omega$. Here, the $N-$th component corresponds to the void region.
The amount of each material is prescribed by a vector $\mass \in \R^N$.

The interface thickness is controlled by a parameter $\varepsilon>0$.
Moreover, the perimeter of the interface can be approximated by the Ginzburg-Landau energy 
\begin{align}
\label{Ginz}
	E(\varphi):=\int_\Omega\left\{ \frac{\varepsilon}{2}|\nabla \varphi|^2 + \frac{1}{\varepsilon}\Psi(\varphi)\right\}\dx
\end{align}
with an appropriate potential $\Psi$.
Consequently, the phase field lies in the set
\begin{align}
\label{eq:PhiAD}
  \Phi_{ad} = \Big\{ \varphi\in \HON \mid \varphi(x) \in \mathbf{G} \text{ a.e.; }
  \fint_{\Omega} \varphi \dx = \mathfrak{m} \Big\},
\end{align}
where $\mathbf{G} = \{ v\in\mathbb{R}^N \mid \sum_{j=1}^N v_j = 1,\ v_j\geq 0  \ \forall j=1,\ldots , N \}$ is given by the Gibbs simplex.
Based on the results in \cite{BFGRS14}, we choose an obstacle potential by setting $\Psi(x) =\tfrac12 x^T Ax$ for $x\in G$ and $\Psi(x) =\infty$ elsewhere for some symmetric matrix $A$ having at least one negative eigenvalue, cf. \cite{BFGS14,EL91}.
In our numerical experiments, we choose
\begin{align}\label{eq:multipot2}
	A = \begin{pmatrix}0 & 0.1 & \cdots & 0.1 & 1 \\ 0.1 & 0 & \cdots & 0.1 & 1\\ \vdots & & \ddots & & \vdots \\ 0.1 & 0.1 & \cdots & 0 & 1\\ 1 & 1 & \cdots & 1 & 0\end{pmatrix}\in \R^{N\times N}.
\end{align}
The minima of $\Psi$ are attained at the unit vectors, corresponding to pure materials. To only penalize impure regions, the diagonal entries are chosen as $0$, while $A_{ij}$ for $i\neq j$ controls the penalization of mixtures of material $i$ and $j$, where mixtures with the void phase get penalized more. This choice ensures a straightened boundary between material and void and prevents 120° angles, which may lead to the formation of cracks, see \cite{BFGS14} and references therein.%
An internal force $f\in L^2(\G\times\Omega)^d$ may act on the finally constructed structure, e.g. gravitational effects and tension forces within the material, and an external force $g\in L^2(\G\times\Gamma_N)^d$ on $\Gamma_N\subseteq \Gamma$.
With the dependence of $f$ and $g$ on values in $\G $ it is possible to prevent e.g. the influence of gravity and tension forces on void regions of the structure.
The forces lead to a displacement field $u$ of the structure.
Given that the structure shall be fixed on $\Gamma_D\subseteq\partial\Omega$, we have $u \in H^1_{D}(\Omega)^d:= \{v \in H^1(\Omega)^d \mid v|_{\Gamma_D}=0 \}$, and $u$ is given as the solution of the linear elasticity equations (also called {\em mechanical system} \eqref{eq:WeakFormLinElast}, see e.g. \cite{AllaireOverview2,WIAS}):
\begin{align}
\label{eq:WeakFormLinElast}
  \intO \mathcal{C}(\varphi)\mathcal{E}(u):\mathcal{E}(\xi) \dx =& \intO f(\varphi,.) \cdot \xi \dx % \nonumber \\ % Overfull Badbox
  + %&
  \intG g(\varphi,.) \cdot \xi\dH\ %\qquad
 \forall\xi\in H^1_{D}(\Omega)^d, \tag{MS}
\end{align}
where $\mathcal{E}(u) := \frac 1 2 (\nabla u + \nabla u^T)$ is the linearized strain tensor and the tensor valued mapping $\mathcal{C}$ is a suitable interpolation, given in \eqref{eq:StressTensor}, of the stiffness tensors $\mathcal{C}( e_i)=\mathcal{C}^i$, $i=1,\ldots , N$,  of the different materials. The material is assumed to be isotropic, i.e., the material properties are uniquely determined by the Lamé parameters $\lambda_i,\mu_i$ by $\mathcal{C}^iM = \lambda_i \text{tr}(M) + 2\mu_i M$ for all matrices $M \in\mathbb{R}^{d\times d}$.
Here, we model the void, in accordance to a result in \cite{BFGS14}, as a very soft ersatz material and hence set
$\mathcal{C}^N  = \varepsilon^2 \tilde{\mathcal{C}}^N$, where $\tilde{\mathcal{C}}^N$
is a fixed elasticity tensor.
To make impure regions less favourable in the optimization process, we choose in particular as in \cite{BFGRS14} 
\begin{align}
\label{eq:StressTensor}
\mathcal{C}(\varphi) = \sum_{i,j=1}^N \varphi_i\varphi_j\mathcal{C}^{max(i,j)},
\end{align}
where the indices of the materials are ordered from stiffest to weakest.
A similar kind of interpolation is used in case of two phases in \cite{BC06,PRW12,WR12} and for the SIMP approach for one material and void in \cite{BenSig03}. 
The additive manufacturing (construction) process  starts
on the so-called building plate
$\Gamma_B\subset\Gamma$.
The building direction shall be $e_d = (0,\dots, 0,1)^T$.
Let $H$ be the maximal height of $\Omega$ in building direction, and
$h$ with $0<h\leq H$ denote the height of the
currently manufactured structure.
This intermediate structure is then contained within $\overline{\Omega_h}$, where
\begin{align}
\label{eq:InterStruc}
\Omega_h &:= \Omega \cap 
\{x\in\mathbb{R}^d\ \mid 0 < x_d < h \},
\end{align}
and $x_d$ denotes the $d-$th component of $x$. 
On the intermediate structure --given by $\varphi|_{\Omega_h}$-- only gravity $f^c(h, \varphi, .)$ acts as force.
The resulting displacement fields
$u^c(h,.)\in H^1_B(\Omega_{h})^d$ with $H^1_B(\Omega_{h})^d:= \{v \in H^1(\Omega_h)^d \mid v|_{\Gamma_B}=0 \}$ change
with the current construction height $h$
(see e.g. Figure \ref{fig:Displacements})
and
fulfil the linear elasticity equations (also called {\em additive manufacturing system} (AMS), see, e.g., \cite{AllaireOverview2,WIAS}) 
\begin{align}
\label{eq:WeakFormAMCS}
\int_{\Omega_{h}}\mathcal{C}^c(h,\varphi,.)
\mathcal{E}(u^c(h,.)):\mathcal{E}(\xi) \dx
= \int_{\Omega_{h}}f^c(h,\varphi,.) \cdot \xi \dx 
\quad \forall \xi\in H^1_B(\Omega_{h})^d \; . \tag{AMS}
\end{align}
This formulation extends the existing models by allowing the stress tensor of (MS) to differ from (AMS). Moreover, we allow $\mathcal{C}^c$ to depend on $h$ and also on the space variable. This can be used e.g. to model the hardening of the material during the construction process.
The {\em self-weight} of the structure up to height $h$ denotes, in accordance to \cite{AllaireOverview2}, the mean compliance with respect to gravity $f^c$, i.e.
\begin{align}
\label{selfweight}
F^c(h,\varphi, u^c(h,.)) :=
\int_{\Omega_h} f^c(h,\varphi,.) \cdot u^c(h,.) \dx.
\end{align}
To avoid collapsing overhangs or large deformations during the construction, one penalizes the self-weights with
\begin{align}
\label{eq:LayerComp1}
W(\varphi, u^c) :=
\int_0^H
\omega(h) 
F^c(h,\varphi, u^c(h,.)) 
\dh,
\end{align}
where $\omega \in C((0,H])$ is a nonnegative weight function.
Then, using the regularization with the Ginzburg-Landau energy as an approximation of the perimeter, the problem of distributing $N$ materials in such a way that, on one hand, the mean compliance of the mechanical system
\begin{align}
\label{eq:MeanComp}
F(\varphi, u):=\intO f(\varphi,.) \cdot u \dx + \intG g(\varphi,. ) \cdot u\dH
\end{align}
is as low as possible, while on the other hand, large deformations during the construction are avoided, can be formulated by
\begin{align}
\tag{$P$}
\label{eq:ContCostFunc}
\min_{\varphi \in \Pad} & J(\varphi, u, u^c) =
F(\varphi, u) 
+ \beta_{1}  W(\varphi,u^c)
+ \beta_{2}E(\varphi) \\ 
\text{s.t.} \quad
& u \text{ is the solution to } \eqref{eq:WeakFormLinElast} \text{, } \nonumber  \\
& u^c(h,.) \text{ is the solution to }
\eqref{eq:WeakFormAMCS}
\text{ for all } h \in (0,H] , \nonumber
\end{align}
with parameters $\beta_1 \geq 0$ and $\beta_2>0$.
For the layer-by-layer manufacturing with $M$ layers of thickness $\delta:=H/M$, it is assumed that each new layer is produced instantaneously. 
Discretizing the integral over $(0,H)$ using a Riemann sum yields the penalization
of deformations during construction
for the layer-by-layer approach by
\begin{align}
\label{eq:LayerCompDisc}
W_\Delta (\varphi, u^c_\Delta) :=
\delta
\sum_{k=1}^M \omega(k\delta) F^c(k\delta,\varphi, u^c_k) .
\end{align}
Here we use the notations $u^c_k:=u^c(k\delta,.)$ and $u^c_\Delta:=(u^c_1,\ldots,u^c_M)^T$.
Consequently, the optimization problem for the layer-by-layer manufacturing reads as
\begin{align}
\tag{$P_\Delta$}
\label{eq:OptimProb}
\min_{\varphi \in \Pad} & J_\Delta(\varphi, u, u^c_\Delta)=
F(\varphi, u) +
\beta_{1}  W_\Delta(\varphi,u^c_\Delta) +
\beta_{2}E(\varphi)
\\ 
\text{s.t.} \quad
& u \text{ is the solution to } \eqref{eq:WeakFormLinElast} \text{, } \nonumber  \\
& u^c_k \text{ is the solution to }
\eqref{eq:WeakFormAMCS}
\text{ with } h=k\delta \quad \forall k=1,\ldots , M .
\nonumber
\end{align}

Let us mention that the above approach incorporates the ansatz of \cite{Allaire2,AllaireOverview2} in their practical applications by setting $\omega(h) \equiv 1$ and using independently of $h$ for $ f^c$ the gravity force $f_{grav}(\varphi, .):= -(1-\varphi_N) e_d$.
It also includes the approach of \cite{WIAS} for the layer-by-layer model by setting $\omega (h) = 1/h$ and $f^c(h,\varphi ,.):= f_{grav}(\varphi) \chi_{\Omega_h/ \Omega_{h-\delta}}$ where $\chi$ denotes the indicator function.
We favour the inclusion of gravity on the whole currently constructed domain $\Omega_h$ and the scaling of the self-weight by $ \omega(h):= \tfrac{1}{|\Omega_h|}$ in order to penalize deformations more evenly throughout the construction process.
The different choices are discussed in the following sections.

When only two phases are present, e.g. one material and void, the system can be reduced to one phase field variable $\tilde \varphi := \varphi_1-\varphi_2 \in [-1,1]$ (and vice versa, it holds $\varphi=\tfrac12(1+\tilde \varphi,1-\tilde \varphi)^T$).
Then the Gibbs simplex is substituted by $[-1,1]$ and the mass of $\tilde \varphi $ is given by
$\tilde{\mathfrak{m}}:= \fint_\Omega \tilde \varphi \dx = \mathfrak{m}_1-\mathfrak{m}_2$.
While $\tilde f(\tilde \varphi) := f(\varphi) $, etc. is employed for the reformulated problem, the Ginzburg-Landau energy is not changed, i.e. 
$\tilde{E}(\tilde{\varphi}) = \int_{\Omega} \frac{\varepsilon}{2} |\nabla \tilde{\varphi}|^2 + \frac{1}{\varepsilon} \tilde{\psi}(\tilde{\varphi}) dx
= 2E(\varphi)$
with $\tilde{\psi}(x)=\tfrac12 (1-x^2) $ on $[-1,1]$ and $\infty$ otherwise.
Thus it holds $\tilde\beta_2:=\tfrac12\beta_2$. 

\newpage
\section{Analytical results for the optimization problem}
\label{sec:AnalysisOCP}
Previous results for the phase field approach to structural topology optimization without the construction phase can be found, e.g., in \cite{BFGS14,VMPT,BurgerStainko,Dede2012,PRW12}. 
In the thesis \cite{DissRupprecht}, Chapter 6, this approach is extended, such that the cost functional and the right-hand side of the linear elasticity equation may depend, in addition to the space variable, also on $\varphi$.
Therein, the existence of global minimizers, differentiability of first and second order, as well as $\Gamma-$convergence results are shown.
In our paper, the model is extended by the construction phase, i.e. by $W$ and $u^c$.
In the numerical experiments, we also allow that the elasticity tensors depend, in addition to $\varphi$, on the spatial variable $x$ and on the current construction state $h$.
However, for the following analysis, which is based on \cite{DissRupprecht}, we assume
$C^c(h,\varphi,.)\equiv C^c(\varphi)$.
We refer to this literature for the precise assumptions on the involved functions and operators, which are fulfilled for our class of examples and omitted for shortness.We like to emphasize that
the coercivity constant $\theta $
of
 $\int_{\Omega}\mathcal{C}(\varphi)
 \mathcal{E}(\eta):\mathcal{E}(\xi) \dx$
 and the  coercivity constants
 $\theta^c(\Omega_h)$
of $\int_{\Omega_{h}}\mathcal{C}^c(\varphi)
 \mathcal{E}(\eta):\mathcal{E}(\xi) \dx$,
 are independent of 
 $\varphi$.
We obtain the following results concerning the control-to-state operators corresponding to the equations \eqref{eq:WeakFormLinElast} and \eqref{eq:WeakFormAMCS}.
\begin{theorem}
The control-to-state operators
 $S:\HON \cap \LinfN \rightarrow H^1_D(\Omega)^d $,
 where \\ 
 $S(\varphi):=u$ is the solution of \eqref{eq:WeakFormLinElast},
 and
 $S^c(h,.):\HON \cap \LinfN \rightarrow  H^1_B(\Omega_{h})^d$,
 where $S^c(h,\varphi):=u^c(h,.)$ is the solution of \eqref{eq:WeakFormAMCS},
are well defined and twice continuously
Fréchet differentiable
for all $h\in (0,H]$. 
Furthermore, the a priori bounds 
\begin{align}
\label{apriori1}
  \| u \| _{ H^1_D(\Omega)^d} &\leq
                                c(\Omega) (\| f(\varphi,.)  \| _{L^2(\Omega)} + \| g(\varphi,.)  \| _{L^2(\Gamma_N)}, \\
  \label{apriori}
\| u^c(h,.) \| _{ H^1_B(\Omega_{h})^d} &\leq c^c(\Omega_h) \| f^c(h,\varphi ,.)  \| _{L^2(\Omega_h)},
\end{align}
hold with constants $c$ and $c^c$ independent of $\varphi$.
\end{theorem}
\begin{proof}
The application of the Lax-Milgram Theorem provides the well posedness
of the control-to-state operators and the a priori bounds. Details can be found in \cite{BFGS14}.
Theorem 6.11 in \cite{DissRupprecht} provides the continuous Fr\'echet differentiability of second order of the operator\\
 $S:\HON \cap \LinfN \rightarrow H^1_D(\Omega)^d $ as well as of each map $\varphi|_{\Omega_h} \mapsto u^c(h,.)$ given by \eqref{eq:WeakFormAMCS} from $ H^1(\Omega_h)^N\cap L^\infty(\Omega_h)^N$ to $H^1_B(\Omega_{h})^d$.
Considering the restriction map $\varphi \mapsto \varphi|_{\Omega_h}$ yields immediately that $S^c(h,.):\HON \cap \LinfN \rightarrow  H^1_B(\Omega_{h})^d $
has continuous Fr\'echet derivatives of second order for all $h\in (0,H]$.
\end{proof}
Consequently,
  we obtain the following result for the mean compliance and the self-weights.
\begin{corollary}
The functionals $\bar F, 
\bar F^c(h),
\bar W_\Delta: \HON \cap \LinfN \rightarrow \R$ with 
\begin{align}
\label{FcDelta}
&\bar F(\varphi):=
F(\varphi, u(\varphi))
,                    \; %\text { and }\quad
\bar F^c(h) (\varphi):=
F^c(h, \varphi, u^c(h,\varphi))
,                  \;
                 % \text { and }\quad 
\bar W_\Delta(\varphi):=
W_\Delta(\varphi, u^c_\Delta(\varphi))
\end{align}
are well-defined and twice
continuously Fréchet differentiable.
\end{corollary}

Further results can be obtained for rectangular design domains $\Omega $. 

\begin{theorem}\label{Wc}
 Let $\Omega=\Omega_B\times (0,H)$ with $\Omega_B=(0,l_1)\times \ldots \times (0,l_{d-1})$
  and $ \Gamma_B=\Omega_B\times \{ 0 \}$.
Moreover, let
$\int_0^H\omega(h)|\Omega_h| \dh $ exist,
and $\| f^c(h,\varphi ) \|_{ L^\infty(\Omega_h)}$
be uniformly bounded for 
$h\in (0,H]$ and $\varphi \in \Pad$.
Then, $\bar W: \Pad \rightarrow \R $ with $ \bar W(\varphi):= W(\varphi, u^c(\varphi)) $ is well-defined,
and $\bar W_\Delta (\varphi)$ is bounded independently of the number of layers $M$
for $\varphi \in \Pad$.
\end{theorem}
\begin{proof}
  Due to \eqref{apriori}, it holds for all $h\in (0,H]$
\begin{align}
\label{Wapriori}
| F^c(h,\varphi, u^c(h))|
&\leq c^c(\Omega_h)
\| f^c(h,\varphi )  \|^2 _{L^2(\Omega_h)} .
\end{align}
For the given $\Omega$, the
constants in the Poincar\'e inequality and in Korn's inequality can be chosen independently of $h$ according to Lemma \ref{PoinIE} and Theorem \ref{kornIE} in the Appendix. 
Then the above mentioned coercivity 
constants $\theta^c(\Omega_h)$ are
independent of $h$.
Hence, there exists $C>0$ with $c^c(\Omega_h) \leq C<\infty $
for all $0<h\leq H$.
With $\| f^c(h,\varphi )  \|^2 _{L^2(\Omega_h)} \leq |\Omega_h| \| f^c(h,\varphi ) \|^2_{ L^\infty(\Omega_h)}$, this yields the remaining assertions under the additional assumptions on $f^c$ and $\omega$.
\end{proof}
Typically $f^c$ is the gravitational force, i.e. $f^c(h, \varphi,.)=-(1-\varphi_N)c_{grav}e_d$, and therefore satisfies the condition on $f^c$ for $\varphi \in \Pad$. 
The assumption on $\omega$
  in Theorem \ref{Wc}
  motivates our choice of $\omega(h)=1/|\Omega_h|$.
In addition, this choice intends to
balance
the dependence of  the self-weights
$F^c(h,\varphi, u^c(h))$
on the size of the domain $\Omega_h$
in 
$ W(\varphi, u^c(\varphi))  $, such that the
deformations on smaller regions are penalized
the same as deformations on larger regions.
With $\omega(h)\equiv 1$ as in \cite{AllaireOverview1,Allaire2}, the deformations at lower construction stages are penalized less than for later stages.
For the choice $\omega(h)=1/|\Omega_h|$
with $f^c(h,\varphi ,.):= -(1-\varphi_N)c_{grav} \chi_{\Omega_h/ \Omega_{h-\delta}}e_d$ as in \cite{WIAS}
and $\Omega$ as in Theorem \ref{Wc},
we obtain
$\lim_{M\to \infty} \max_{k=1,\ldots,M}\| u^c_k \| _{ H^1_B(\Omega_{h})^d} =0 $
using the estimate \eqref{apriori}.
Hence, this model assumes that using thinner layers
leads to less deformations during the construction.
Also the penalization of the self-weights converges to zero when the number of layers $M$ tends to infinity,
i.e. $\lim_{M\to\infty}\bar W_\Delta (\varphi)=0$.
Numerical evidence is given in Subsection \ref{sec:SolverSpec}.

\begin{theorem}
  \label{Analysis}
There exists a global minimizer for the given additive manufacturing structural optimization problem \eqref{eq:OptimProb}.
Moreover, the cost functional of the reduced problem formulation
\begin{align}
\label{eq:RedCostFunc}
j_\Delta(\varphi) &:= J_\Delta(\varphi, u(\varphi), u^c_\Delta(\varphi )) 
\end{align}
is twice continuously  Fr\'echet differentiable in $\HON \cap \LinfN $ and $j_\Delta \in C^{1,1}(\Pad) $.
\end{theorem}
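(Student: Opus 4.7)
My plan is to split the assertion into its two parts—existence of a global minimizer and $C^{1,1}$-regularity of the reduced cost functional—and to lean on the machinery already assembled in the excerpt and in the cited thesis. For existence I would apply the direct method of the calculus of variations; for differentiability the chain rule suffices, given the smoothness of the control-to-state maps quoted from Rupprecht's Theorem~6.11.

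For existence, I first note that $\Pad$ is nonempty, convex and closed in $\HON$, hence weakly sequentially closed. The constraint $\varphi(x)\in \mathbf{G}$ provides a uniform $L^\infty$-bound, and on any minimizing sequence $\{\varphi_n\}$ the value of the Ginzburg-Landau energy is bounded, which together with Poincaré's inequality and the prescribed mean $\mass$ yields boundedness in $\HON$. After passing to a subsequence, $\varphi_n \rightharpoonup \varphi^\star \in \Pad$, and by Rellich-Kondrachov the convergence is strong in every $L^p(\Omega)^N$ with $p<\infty$. Weak lower semicontinuity of $E$ follows from the convexity of the gradient part together with continuity of the quadratic obstacle potential on $\mathbf{G}$. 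For the compliance terms I would prove that $\bar F$ and $\bar W_\Delta$ are in fact weakly continuous on $\Pad$: the polynomial dependence of $\mathcal{C}$, $f$, $g$, $f^c$ on $\varphi$ combined with the strong $L^p$-convergence of $\varphi_n$ allows passage to the limit in \eqref{eq:WeakFormLinElast} and \eqref{eq:WeakFormAMCS}; using the a~priori bound \eqref{apriori}, which is uniform on $\Pad$, one extracts weak $H^1$-limits of $u(\varphi_n)$ and $u^c_k(\varphi_n)$ and identifies them with $u(\varphi^\star)$ and $u^c_k(\varphi^\star)$. Lower semicontinuity of $j_\Delta$ at $\varphi^\star$ follows, which combined with $\varphi^\star \in \Pad$ gives a global minimizer.

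For differentiability, Rupprecht's theorem already supplies twice continuous Fréchet differentiability of $S$ and of every $\varphi \mapsto S^c(k\delta,\varphi)$ as maps $\HON \cap \LinfN \to H^1_D(\Omega)^d$, resp.\ $H^1_B(\Omega_{k\delta})^d$. The functionals $F$, $F^c$ and the Ginzburg-Landau term are polynomial in $\varphi$ and (bi)linear in the displacement, hence $C^\infty$ in their arguments on $\HON \cap \LinfN$. The chain rule then yields $j_\Delta \in C^2(\HON \cap \LinfN)$. To obtain $j_\Delta \in C^{1,1}(\Pad)$, I use that $\Pad$ is bounded and convex in $\HON \cap \LinfN$: for any $\varphi_1,\varphi_2 \in \Pad$ the segment $[\varphi_1,\varphi_2]$ remains in a fixed bounded subset on which the continuous second derivative is uniformly bounded, so the mean-value inequality delivers the Lipschitz estimate for $j_\Delta'$.

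The main obstacle is the weak-lower-semicontinuity step: one must transfer weak $\HON$-convergence of the controls, through the nonlinear coupling in $\mathcal{C}(\varphi)$ and the $\varphi$-dependent forces, into convergence of the associated states and hence of the compliance values. The $L^\infty$-bound from $\mathbf{G}$, the Rellich compactness, and the $\varphi$-uniform a~priori bound \eqref{apriori} are exactly what is needed here. Everything else—existence of weak limits, smoothness of the reduced objective and the passage from $C^2$ on the ambient space to $C^{1,1}$ on the bounded convex set $\Pad$—is routine once this step is settled.
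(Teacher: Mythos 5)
Your existence argument (direct method, weak sequential closedness of $\Pad$, Rellich compactness plus the uniform a~priori bound \eqref{apriori} to pass to the limit in the states) and your chain-rule argument for twice continuous Fr\'echet differentiability follow the same route as the paper, which simply defers both steps to Section~6 of the cited thesis after composing the control-to-state maps with the restriction $\varphi\mapsto\varphi|_{\Omega_h}$. Those parts are fine.

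There is, however, a genuine gap in your final step. You derive $j_\Delta\in C^{1,1}(\Pad)$ from the claim that $\Pad$ is \emph{bounded} in $\HON\cap\LinfN$, so that the continuous second derivative is uniformly bounded there and the mean-value inequality applies. But $\Pad$ is not bounded in $\HON$: the Gibbs-simplex constraint only bounds $\varphi$ in $\LinfN$ (hence in every $L^p$), while the $H^1$-seminorm of admissible phase fields is unconstrained, so continuity of $j_\Delta''$ alone does not give a uniform bound on $\Pad$ and your Lipschitz estimate does not follow as stated. The Lipschitz continuity of $j_\Delta'$ on $\Pad$ instead comes from the specific structure of the second derivatives \eqref{Fsec}, \eqref{Esec} (and their analogues for $F^c$): $\mathcal{C}''$ and $\nabla^2_{\varphi,\varphi}\Psi$ are constant because $\mathcal{C}$ and $\Psi$ are quadratic, the forces and their $\varphi$-derivatives are bounded on the simplex, the coercivity constants are independent of $\varphi$, and the a~priori bounds for $u(\varphi)$, $u^c_k(\varphi)$ and the linearized states $w_\zeta$ depend only on $\|f(\varphi)\|_{L^2}$, $\|g(\varphi)\|$, $\|f^c(k\delta,\varphi)\|_{L^2}$, which are uniformly bounded for $\varphi\in\Pad$ by the $L^\infty$-bound. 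This yields $\sup_{\varphi\in\Pad}\|j_\Delta''(\varphi)\|<\infty$ as a bilinear form on $\HON\cap\LinfN$, which is the estimate the mean-value argument actually needs (and is how the thesis the paper cites obtains $j_\Delta\in C^{1,1}(\Pad)$); replace your boundedness claim for $\Pad$ by this uniform bound and the step is repaired.
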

%%%
These results also hold true for the case of two phases reduced to one phase variable $\tilde \varphi $. 

In the following, the derivatives are stated
as they are needed for the numerics in Section \ref{sec:VMPT}.
The existence and uniqueness of the adjoints $p$ to \eqref{eq:WeakFormLinElast} and of $p^c(h,.)$ to \eqref{eq:WeakFormAMCS} for each $h\in (0,H]$ follows directly due to the symmetry of the left hand side of the state equations and the linearity of the cost functional with respect to $u$ and $u^c(h,.)$, respectively.
For $p$, the right-hand side of the corresponding state equation coincides with $F$
in the cost functional,
and hence $p=u$, while for $p^c(h,.)$ it scales with $\beta_1 \omega(h)$ which results in $p^c(h,.)= \beta_1 \omega(h) u^c(h,.)$, respectively for $p^c_k$ we have $p^c_k= \delta \beta_1\omega(k\delta ) u^c_k$ for $k=1,\ldots ,M$
(see e.g. \cite{BFGS14,DissRupprecht}
  for $p$).
This simplifies the expressions of the derivatives. We obtain the first order Fréchet derivatives for $\zeta\in\HON\cap\LinfN$:
\begin{align}
&\dphi F(\varphi,S(\varphi))[\zeta] 
 =
 \intO 2 f_\varphi (\varphi,\cdot)[\zeta] \cdot u - \mathcal{C}'(\varphi)[\zeta]\mathcal{E}(u):\mathcal{E}(u) \dx
 \label{Fdiff}\\
& \qquad  + \intG 2 g_\varphi(\varphi,\cdot)[\zeta] \cdot u\dH \ ,
\nonumber\\
&\dphi E(\varphi)[\zeta]  =
\intO \varepsilon \nabla\varphi:\nabla\zeta + \tfrac{1}{\varepsilon} \nabla \Psi(\varphi) \cdot \zeta\dx \ ,
\label{Ediff}
\end{align}
and the derivative of the self-weight $F^c$ with respect to $\varphi $ has the same structure as for $F$ given in \eqref{Fdiff}. 
Positive definite parts of the second derivative of the reduced cost functional, which induces an inner product, are employed in the numerics. 
Hence, we list the formulae here, again omitting $F^c$ due to the same structure as $F$.
To compute the second order derivatives, the linearized state equations for
\eqref{eq:WeakFormLinElast}
for a $\zeta\in\HON\cap\LinfN$
\begin{align}
\label{eq:LinMS}
\intO  \mathcal{C}(\varphi)\mathcal{E}(
w_{\zeta}):\mathcal{E}(\xi) \dx =
&- \intO \mathcal{C}'(\varphi)[\zeta]\mathcal{E}(u):\mathcal{E}(\xi) \dx\\
 &+ \intO f_{\varphi} (\varphi,.)[\zeta] \cdot \xi \dx + \intG g_{\varphi}(\varphi, .)[\zeta] \cdot \xi\dH
\; 
\nonumber
\end{align}
and the corresponding linearized state equation for
\eqref{eq:WeakFormAMCS}
have to be solved providing \\
$w_\zeta= S'(\varphi)[\zeta]$ and $w^c_\zeta(h)= (S^c(h))'(\varphi)[\zeta]$ (and with them, the linearized adjoint states $\hat p =w_\zeta$, $\hat p^c(h)= \beta_1\omega(h) w^c_\zeta(h)$, $\hat p^c_k= \delta \beta_1\omega(k\delta) w^c_\zeta(k\delta)$.)
Existence and a priori estimates are given in Theorem 6.9 in \cite{DissRupprecht}, while in Theorem 6.44 the following representations of the second derivatives for $\zeta,\eta\in\HON\cap\LinfN$ are rigorously shown:
\begin{align}
&\dphitwo F(\varphi, S(\varphi))[\zeta,\eta] 
= \intO 2 f_{\varphi,\varphi} (\varphi,x)[\zeta,\eta] \cdot u  \dx
+ \intG 2 g_{\varphi,\varphi}(\varphi,x)[\zeta,\eta] \cdot u \dH
\label{Fsec} \\
&- \intO \mathcal{C}''(\varphi)[\zeta,\eta]\mathcal{E}(u):\mathcal{E}(u) \dx
+ 2 \intO \mathcal{C}(\varphi)\mathcal{E}(w_\zeta):\mathcal{E}(w_\eta) \dx \; , \nonumber \\
&\dphitwo E(\varphi)[\zeta,\eta] 
= \intO \varepsilon \nabla\zeta : \nabla\eta + \tfrac{1}{\varepsilon} \nabla^2_{\varphi,\varphi} \Psi(\varphi) [\zeta,\eta]\dx  \; . \label{Esec}
\end{align}

\section{Numerical method for the optimization problem}
\label{sec:VMPT}
In a sharp interface setting, similar shape and topology optimization problems concerning additive manufacturing have been solved using the topological derivative numerically via a level set approach in \cite{AllaireOverview1,AllaireOverview2,Allaire2}. 
In a scalar phase field setting, pseudo time stepping with a fixed number of time steps has been applied in \cite{WIAS} to obtain a numerical solution.
However, there exists no convergence analysis for this method.
Given that the reduced cost functional $j_\Delta$ is only Fréchet differentiable in the Banach space $\HON\cap\LinfN$, we apply the variable metric projection type (VMPT) method introduced and analyzed in \cite{VMPT}. 
This is an extension of the classical projected gradient method in Hilbert spaces as presented, e.g. in \cite{Bertsekas,GruverSachs1981,KelleySachs1992}.
We briefly review the method and fix the notation used in the following:
\begin{algo}[VMPT method]\label{algorithm1}
\quad 
\begin{algorithmic}[1]
\STATE Choose $0<\tau < 1$, $0<\sigma<1$, $\varphi^{(0)} \in \Pad$ and set $k := 0$
\WHILE{$k \leq k_{\textrm{max}}$}
\STATE Choose$\rho_k> 0 $.
        \STATE Choose a symmetric positive definite bilinear form $a_k$ on
$\HON$.
	\STATE \label{choiceofoverlinevarphi} Calculate the minimizer $\hat \varphi ^{(k)} $
        of the subproblem
\begin{align}
\label{eq:ProjProb}
\min_{y\in\Pad} q_k(y) := \dfrac{1}{2} \Vert y-\phik \Vert^2_{a_k} + \rho_k j_\Delta'(\phik) [ y-\phik ]
\end{align}
	\STATE 
        $v^{(k)} := \hat \varphi ^{(k)} - \phik$
\STATE if  $\| v^{(k)} \|_{\HON} \leq \textrm{tol}$ then return
	\STATE Determine  $\alpha_k:= \tau^{m_k}$ with minimal $m_k\in\N _0$ such that 
\begin{align}
\label{eq:armijo}
j_\Delta(\phik + \alpha_k v^{(k)}) \leq j_\Delta(\phik) + \alpha_k \sigma  j_\Delta'(\phik)[v^{(k)}]  .
\end{align}
	\STATE
        $\varphi^{(k+1)} := \phik + \alpha_k v^{(k)}$; 
	$k:=k+1$
\ENDWHILE
\end{algorithmic}
\end{algo}
Let us mention that the convergence of the algorithm in function space indicates that the iteration numbers of the discretized algorithm are bounded independently of the discretization level.
In the following, we discuss possible choices of $a_k$ and $\rho_k$
and verify the assumptions for convergence given in \cite{VMPT}. We start with the assumptions on the given optimization problem:
The mass constraint is included in the space $\HON $ by considering the shift
$\varphi -\mathfrak{m}$.
We denote by $\Hzero$ the functions in $\HON$ with zero mass.
That the space $\Hzero \cap\LinfN$, as well as the set $\Pad$, fulfil the assumptions on the Banach space and the admissible set is shown in \cite{VMPT}. 
Theorem \ref{Analysis} ensures that also the requirements on $j_\Delta $ are fulfilled.
\newpage
\begin{remark}[Choice of the bilinear forms $a_k$.]
\label{choiceAK}
{\rm
\hfill \\
Choosing for all $k$ the scalar product
\begin{align}
a^1_k(p,y):= \beta_2\varepsilon \intO \nabla p:\nabla y \dx
\end{align}
on the Hilbert space $\Hzero$, the assumptions on the bilinear form listed in \cite{VMPT} are fulfilled.
Here, $a^1_k$ includes the scaling with $\beta_2\varepsilon$, which appears in the second derivative of the Ginzburg-Landau energy.
However, to be able to include more second order information and thereby speed up the error reduction, the bilinear forms $a_k$ are allowed to vary with the iterations:
\begin{align}
\label{eq:SOI1} a^2_k(p,y) :=&
\beta_2\varepsilon \intO \nabla p:\nabla y \dx + 2\intO \mathcal{C}(\phik) \mathcal{E}(w_p) : \mathcal{E}(w_y) \dx \\
= &\beta_2\varepsilon \intO \nabla p:\nabla y \dx
- 2\intO \mathcal{C}' (\phik)[p] \mathcal{E}(u(\phik)) : \mathcal{E}(w_y) \dx
\label{eq:MetricTransformation}
\\
&+ 2\intO (f)_\varphi(\phik,.)[p] \cdot w_y \dx
+ 2\int_{\Gamma_N}(g)_\varphi(\phik,.)[p] \cdot w_y \dx
, \nonumber
\end{align}
where $w_\zeta:=S'(\phik)[\zeta]$ is given by \eqref{eq:LinMS}.
In \eqref{eq:SOI1} one can see that
  $a_k^2$ is symmetric positive definite on $\HON$.
  By testing \eqref{eq:LinMS} for $w_p$ with $w_y$, the calculation of $w_p$ is dispensable. Hence,
 \eqref{eq:MetricTransformation} is used for numerical implementation.
The bilinear forms $a^2_k$ include symmetric positive parts of $j_\Delta''(\phik)$, namely those of $F''(\phik)$ and $E''(\phik)$ (see \eqref{Fsec} and \eqref{Esec}).

Along this line, the inclusion of corresponding parts of $W_\Delta$ also meets the requirements, which can be shown by extending the proof for $F$. 
Therefore, in the numerical experiments, we also test
\begin{align}
\label{eq:SOI2} a^3_k(p,y) :=& a^2_k(p,y) +
2\beta_1 \delta \sum_{j=1}^M \omega(j\delta)
\left\{
\int_{\Omega_{j\delta}} (f^c)_{\varphi} (j\delta,\phik,\cdot)[p] \cdot w^c_{j\delta,y} \dx \right. \\
&\left. - \int_{\Omega_{j\delta}} \mathcal{C}^c_\varphi(j\delta,\phik,\cdot )[p]\mathcal{E}(u_j^c(\phik )):\mathcal{E}(w^c_{j\delta ,y} ) \dx
\right\}, \nonumber
\end{align}
where $w^c_{h,\zeta} :=(S^c(h))'(\phik)[\zeta]$.

Although fewer iterations until termination of the method are expected when $a_k$ includes more second order information, the computational cost of determining the corresponding search direction is increased since the projection type subproblem \eqref{eq:ProjProb} has multiple linearized state equations as constraints in these cases. 
The solver for the subproblem is discussed in Remark \ref{DiscPdas}.
}\end{remark}
%%%%%%%%%%%%%%%%%%%%%%%%%
\begin{remark}[Choice of the scaling $\rho_k$ of $j'(\varphi^{(k)})$.]
\label{choiceLAM}
{\rm \hfill \\
  Although the parameter $\rho_k$ can be included in $a_k$, we treat it as a separate parameter.
Herewith, we can incorporate the idea of the search along the
projection arc.
Namely, a full step ($\alpha_k=1$) is done, and the scaling $\rho_k$
of the gradient is chosen such that an Armijo-type inequality holds \cite{Bertsekas,KelleySachs1992}.
This would be an expensive algorithm, given that the subproblem \eqref{eq:ProjProb} has to be solved in each Armijo step. 
Furthermore, the existence of such a $\rho_k$
is not guaranteed.
However, to be close to the boundary of $\Pad$ in each iteration, where the solution is expected in general, we apply the following rule suggested in \cite{BFGRS14,VMPT}:
For a given $0<c<1$ set
\begin{align}\label{lambdaRule}
\rho_{k+1} :=
\left\{
\begin{array}{l c l }
\max (c \rho_k, \rho_{\min})&\text { if } &
\alpha_k<1,\\
\min (\frac{1}{c} \rho_k, \rho_{\max})&\text { if } & \alpha_k=1.
\end{array}
\right. 
\end{align}
(As in \cite{VMPT}, we choose $\rho_0=0.005$, $\rho_{\min}=10^{-10}$,
$\rho_{\max}=10^{10}$ and $c=0.75$ in the numerical experiments.)
Herewith, the requirement $0<\rho_{\min}\leq \rho_k\leq \rho_{\max}\ \forall k\in\N_0$ for convergence of the method is fulfilled.
In the numerical experiments, $\rho_k$
increased as long as no clear interface emerged in $\phik$. 
After the formation of the interface, one can observe that $\rho_k$ stabilizes, as observed in Figure 1 in \cite{BFGRS14}.
}\end{remark}

\medskip
Given that with these choices, the assumptions of Theorem 2.2 in \cite{VMPT} are fulfilled, we obtain the following result:
\begin{theorem}
\label{VMPTprop}
Let $\{\phik \}_k \in \HON \cap \LinfN$ be the sequence generated by the VMPT
Algorithm \ref{algorithm1}
applied to the optimization problem
\eqref{eq:OptimProb} while
using $\rho_k$
and $a_k$ as described in Remark \ref{choiceAK} and \ref{choiceLAM}.
Then $j_\Delta(\phik )$ converges with $k\to \infty$ and every accumulation point of $\{\phik \}_k$ is a stationary point.
Furthermore, for the search directions $v^{(k)}$ it holds $j^\prime_\Delta(\phik )[v^{(k)}] \rightarrow 0$ and $\| v^{(k)}\|_{\HON}\rightarrow 0$ for $k \to \infty$.
\end{theorem}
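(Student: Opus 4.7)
The plan is to reduce the claim to a direct application of Theorem 2.2 of \cite{VMPT}, so the proof amounts to a bookkeeping verification that all four blocks of hypotheses of that master theorem are satisfied: (i) the functional-analytic setup (ambient Banach space, closed convex admissible set), (ii) regularity of the reduced cost functional, (iii) admissibility of the sequence of metrics $\{a_k\}$, and (iv) admissibility of the sequence of step-scalings $\{\lambda_k\}$. Since the paper has already collected the main ingredients, I would structure the proof as a checklist and spend effort only on the non-trivial item (iii).

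First, for (i) I would shift by the mass vector $m$ and work with $\Pad - m \subset \Hzero \cap \LinfN$; the requisite closedness, convexity, and boundedness properties of this set inside the Banach space $\Hzero \cap \LinfN$ have already been verified in \cite{VMPT}, so a single sentence referencing that suffices. For (ii), Theorem \ref{Analysis} delivers exactly what the master theorem needs: $j_\Delta$ is twice continuously Fréchet differentiable on $\HON \cap \LinfN$ and, crucially, $j_\Delta \in C^{1,1}(\Pad)$, which supplies the Lipschitz continuity of $j_\Delta'$ on $\Pad$ used to produce the Armijo step in finitely many backtracking trials. For (iv), the update rule \eqref{lambdaRule} gives by construction $\lambda_{\min} \le \lambda_k \le \lambda_{\max}$ for all $k$, which is precisely the hypothesis on the scaling sequence in \cite{VMPT}.

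The only nontrivial step is (iii): I must show that each of the admissible choices $a^1_k, a^2_k, a^3_k$ is symmetric and uniformly $H^1$-equivalent on $\Hzero$, i.e. there exist constants $0 < c_1 \le c_2 < \infty$, independent of $k$, such that $c_1 \|y\|_{\HON}^2 \le a_k(y,y) \le c_2 \|y\|_{\HON}^2$ for all $y \in \Hzero$. Symmetry is immediate from the definitions. For coercivity, I would note that all three candidates contain the leading term $\beta_2 \varepsilon \int_\Omega \nabla y : \nabla y\, dx$, and that the additional quadratic-in-$w$ contributions in $a^2_k$ and $a^3_k$ are non-negative once one rewrites them (as indicated after \eqref{eq:MetricTransformation}) as $2 \int_\Omega \mathcal{C}(\phik)\mathcal{E}(w_y):\mathcal{E}(w_y)\,dx$ plus the corresponding self-weight contribution; combined with the Poincaré inequality on $\Hzero$ this gives coercivity with a constant depending only on $\beta_2$, $\varepsilon$, and $\Omega$. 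For the upper bound, I would use the a priori estimate \eqref{apriori} applied to the linearized systems for $w_y$ and $w^c_{h,y}$: because $\phik$ takes values in the compact Gibbs simplex, $\mathcal{C}(\phik)$ and $\mathcal{C}'(\phik)$ are uniformly bounded in $L^\infty$, and so is $u(\phik)$ by the a priori bound together with boundedness of $f, g$; this yields $\|w_y\|_{H^1_D} \le C \|y\|_{\HON \cap \LinfN}$ with $C$ independent of $k$, which gives the desired upper bound.

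The main obstacle I anticipate is precisely this uniform $k$-independence for $a^2_k, a^3_k$: one must argue that the coefficient fields $\mathcal{C}(\phik), \mathcal{C}'(\phik), u(\phik), u^c_k(\phik)$ stay in a bounded set as $k$ varies, which rests on the compactness of $\mathbf{G}$ and on the uniform coercivity of the elasticity operator on $\Omega_h$ for all $h \in (0,H]$ — exactly the $h$-independent Korn/Poincaré bounds established in the Appendix. Once (i)--(iv) are in place, Theorem 2.2 of \cite{VMPT} directly yields monotone convergence of $j_\Delta(\phik)$, $j'_\Delta(\phik)[v_k] \to 0$, $\|v_k\|_{\HON} \to 0$, and the stationarity of accumulation points, which is the full statement of the theorem.
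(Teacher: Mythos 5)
Your proposal follows essentially the same route as the paper: the result is obtained by verifying the hypotheses of Theorem 2.2 in \cite{VMPT} — the Banach space and admissible set requirements (already established in \cite{VMPT} for $\Hzero\cap\LinfN$ and $\Pad$), the regularity of $j_\Delta$ from Theorem \ref{Analysis}, the bounds $\lambda_{\min}\leq\lambda_k\leq\lambda_{\max}$ from the update rule, and the admissibility of $a^1_k,a^2_k,a^3_k$ — and then invoking that theorem, with your checklist matching the paper's verification. Your extra detail on the metrics is sound, up to the small caveat that the upper bound for $a^2_k,a^3_k$ is naturally with respect to the $\HON\cap\LinfN$ norm (as your own estimate $\|w_y\|_{H^1_D}\leq C\|y\|_{\HON\cap\LinfN}$ shows), rather than a pure $H^1$-equivalence, which is consistent with the boundedness assumption in \cite{VMPT}.
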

The convergence of the whole sequence $|| v^{(k)}||_{\HON}$ to zero reasons the stopping criteria used in the Algorithm \ref{algorithm1}.
\medskip
\medskip

For the numerical efficiency of the VMPT method, a fast solution of the quadratic subproblem \eqref{eq:ProjProb} is essential.
We employ the primal-dual active set (PDAS) method (see, e.g., \cite{HIK02}).
This method is discussed with respect to the constraints $\Pad$ e.g. in \cite{BGSS13b,BSS12}.
In the literature, it is shown that the method is applicable only to the fully discretized problem.
Hence, next we provide details of the employed spatial discretization.
\begin{remark}[Spatial discretization]\label{Discretization}{\rm  \hfill \\
We consider rectangular domains $\Omega:= \Omega_B \times (0,H)$ and choose a Friedrich-Keller triangulation $\mathcal{T}_{\deltax}$ with fixed mesh sizes in each direction. 
In particular, the mesh size $\deltax$ in direction $e_d$ is chosen such that $\bar K :=\delta / \deltax \in \N$, reminding the definition $\delta=H/M$.
This guarantees that the mesh resolves each layer in the same way.
Furthermore, to resolve the interface appropriately at least 8 mesh points should lie across the interface, i.e. $\delta \leq \varepsilon/4$ is required.
Let us mention that adaptive meshes, as performed in \cite{AdaptiveMeshStructOptim}, influence possible changes in the topology.
Also, uniform meshes avoid mesh updates in each iteration and allow possible reuse of assembled matrices.
The same mesh is used for all involved functions.
All functions are approximated using standard piecewise linear $P1$ finite elements.}
\end{remark}
Since the PDAS method is performed in every VMPT step, we give some details that have a large computational impact and enhance numerical efficiency in the following. 
\begin{remark}[Some details on the solver for the quadratic subproblem]
  \label{DiscPdas}{\rm  \hfill \\
  %%%%%%%%%%%%%%%%%%%%%%%%%
The PDAS method can be viewed as a semismooth Newton method applied to the discretized first-order condition formulated as $G(x)=0$ \cite{HIK02}.
We globalize this method by applying the classical damped Newton method to $\min \frac{1}{2}\|G(x)\|^2$. That means, the Newton step provides the descent direction $d$  and the step-length $t$ is determined with backtracking using the factor $0.75$ in the Armijo condition,
which reads in this case as $\| G(x+td)\|^2 \leq (1 - t/4 ) \| G(x) \|^2 $.
As an initial guess for the PDAS algorithm in the $k$-th VMPT iteration, we choose the solution (including the Lagrange multipliers) of the PDAS algorithm of the $(k-1)$-th VMPT iteration. 
This approach significantly reduced the necessary mean PDAS iteration.

While the remaining main steps of the PDAS algorithm can be found in the given references, we now like to give details on how the arising linear equations are solved.
Let $\bar \varphi$ denote the vector consisting of the coefficients of the numerical finite element solution for $\varphi$. For the other functions, the notation is used respectively. Then the discretized elasticity equations
  \eqref{eq:WeakFormLinElast} and
  \eqref{eq:WeakFormAMCS}
  for a given $\phik$ result into linear systems 
\begin{align}
  \Bk \bar u^{(k)} = \bar r^{(k)} , \qquad
  \Bcjk \bar u^{c,j,(k)} = \bar r^{c,j,(k)}  \text{ for } j=1,\ldots , M ,
\end{align}
with symmetric positive definite matrices $\Bk$ and
$\Bcjk$.
The corresponding discretized linearized state equations \eqref{eq:LinMS} together with the corresponding version for \eqref{eq:WeakFormAMCS} take the form
\begin{align}
  \Bk \bar w_\zeta^{(k)} = \Fk \bar \zeta ,  \qquad
  \Bcjk \bar w^{c,j,(k)} = \Fcjk  \bar \zeta  \text{ for } j=1,\ldots , M .
\end{align}
The cost functional $q$ of the subproblem \eqref{eq:ProjProb} involves a bilinear form $a_k$.
The discretization of the bilinear form, e.g. $a^3_k$ given in \eqref{eq:SOI2}, yields
$  a^3_k(p_{disc},y_{disc})=
\bar p^T A_3^{(k)} \bar y $, where
\begin{align}\label{eq:A4}
\tfrac12 A_3^{(k)}=
\tfrac{\beta_2\varepsilon}{2} L +
  (\Fk )^T (\Bk )^{-1} \Fk
  + \beta_1 \delta \sum_{j=1}^M \omega (j\delta) (\Fcjk )^T  (\Bcjk )^{-1} \Fcjk 
\end{align}
with the stiffness matrix $L$ corresponding to
$\int_\Omega \nabla p : \nabla y \dx $.
In each iteration of the PDAS algorithm, we use MINRES to solve the arising linear saddle point problem, where the 1-1-block consists of the part of $A_3 ^{(k)}$ which corresponds to the current inactive set.
Hence, in each MINRES iteration of one PDAS iteration these parts of $A_3 ^{(k)}$ have to be applied to a vector.
Therefore, we assemble the matrix $L$ once, and in the $k-$th VMPT step  also the matrices $\Bk,\Bcjk, \Fk, \Fcjk $ are assembled and for $\Bk$ and $ \Bcjk$ sparse LU-decomposition is performed.
These are then used in the outer loop and in all inner loops of the $k-$th VMPT step. 
Furthermore, to accelerate the computation, $multiprocessing$ is employed.
}\end{remark}
\begin{remark}[Nested approach for the VMPT method]
\label{sec:nesting} {\rm \hfill \\
When we do not study the dependence of the iteration numbers on the mesh resolution or the number of layers, we employ a nested approach by successively increasing the number of layers while simultaneously decreasing the mesh width until the given number $M$ of layers is reached and $\delta_H$ is small enough. 
We proceed as follows.
First, the optimization problem is solved for a given minimal number $M_0$ of layers and a given number
$\hat K$ determining the mesh size
$\delta_H:= \tfrac{H}{M_0 \hat K}$ in direction $e_d$.
  Here $\hat K$ has to be such large that $\delta_H \leq \varepsilon$
  (we use $M_0=4$ and $\hat K=10$).
This solution $\varphi^{(k)}$
is used as an initial guess for the refined problem, where the number of layers is increased by a fixed factor, e.g., by $2$, and the mesh size is halved, such that it still holds $\hat K \delta_H=\tfrac{H}{M_i}$. 
This process is iterated as long as $M_i\leq M$.
When the required number $M$ of layers is reached, the mesh size is further halved successively until the interface is resolved appropriately, i.e., until at least $\delta_H\leq \varepsilon /4 $.
While the scaling parameter
  $\rho_k$
  is kept for the refined problem,
  the current displacement fields are not utilized
  given -amongst other reasons- that
  the number of displacement fields grow
  with increasing the number of layers.
  However,
  the first PDAS iteration for the quadratic subproblem
  \eqref{eq:ProjProb} of the refined problem
  is initialized with the last PDAS iterate
  of the current problem $\hat \varphi^{(k)}$ and
  its corresponding Lagrange multipliers.
}
\end{remark}

\section{Numerical Experiments}
\label{sec:NumExp}
In this section, numerical results are discussed. The VMPT method and all experiments are implemented in $Python$ using libraries such as $scipy$, $FEniCS$ \cite{Fenics}, and $multiprocessing$. Every experiment is performed on an office computer with an \\ $Intel(R)\ Core(TM)\ i7-14700K\ 3.40 GHz$ processor and $64 GB$ of RAM.
In the first subsection, the performance of the VMPT method on the problem \eqref{eq:OptimProb} is discussed and analyzed.
Afterwards, the influence of the model parameters, as well as of the general setting on the obtained solutions, is studied. Furthermore, examples with three phases and in 3D are presented.

\begin{testexample} \label{testEx} 
\hfill   \\
{\rm
  As a test example, we employ the cantilever beam with the following specifications, which are used throughout the numerical experiments if not mentioned otherwise.
A scalar phase field $\tilde{\varphi}$ is used to model the distribution of material and void in $\Omega= [0,3] \times [0,1]$, with the fixed mass constant $\tilde{\mathfrak{m}}=-0.25$, which corresponds to $37.5\%$ material and $62.5\%$ void. 
In the mechanical system, only the outer force $\tilde g(\tilde{\varphi},x) = -e_d$ is applied on $\Gamma_N = [2.75,3] \times \{0\}$, while the structure is fixed on $\Gamma_D=\{0\} \times [0,1]$. 
The building plate is located on the bottom side, i.e. $\Gamma_B = [0,3]\times\{0\}$. The gravitational force during construction is chosen as $\tilde{ f}^c(\tilde{\varphi},h,x)= -0.5(1+\tilde{\varphi})e_d$. 
All forces chosen in this example are consistent with the choices in  \cite{AllaireBogoselSupports1,AllaireOverview2}. 
If not mentioned otherwise, the Lamé parameters are set to $\lambda_1=\mu_1=44$ for the material, and $\lambda_2=\mu_2=\varepsilon^2$ for the void, following \cite{AllaireBogoselSupports1,AllaireOverview2,VMPT}.
To account for reduced stiffness of the material during the construction process, the Lamé parameters are lowered to $\mu_1^c=\lambda_1^c = 32$, while $\mu_2^c=\lambda_2^c = \varepsilon^2$ remain unchanged.
Further, we set the parameters in the cost functional to $\beta_1 = 48$, $\beta_2 = 0.02$ and choose $M=10$. The weighting $\omega$ is chosen as $\omega(h) = \frac{1}{h}= \frac{3}{|\Omega_h|}$ as suggested in Section \ref{sec:AnalysisOCP}. 
The parameter corresponding to the interface width is set to $\varepsilon= 0.025$. 
The termination criterion is given by $\sqrt{\varepsilon\beta_2}|v^{(k)}|_{H^1}\leq 10^{-3}$ throughout the numerical results. 
As mentioned before, to properly resolve the interface, the mesh width should not exceed $\frac{\varepsilon}{4}$. 
To show the full potential of the VMPT method, we drive the maximal mesh width down to $\frac{\varepsilon}{8}$, i.e. to a uniform mesh with $308~481$ nodes.
By DOFs, we refer to the number of unknowns that are involved in the optimization problem, i.e. the number of unknowns for $\tilde{\varphi}$, $u$ and all $u^c_k$. For the introduced example, an optimization problem with $4~010~253$ DOFs has to be solved. 
The initial data for the solver
is chosen such that no pure phases are present and the mass constraint is roughly fulfilled. To be more precise, we set
$\tilde{\varphi}_0=\tilde{\mathfrak{m}} /|\Omega |$
with an additional normally distributed disturbance with standard deviation $0.05$.
Using this setting, the topology changes several times during the optimization process.
}
\end{testexample}

\subsection{Numerical analysis of the optimization solver}
\label{sec:SolverSpec}

\subsubsection{\texorpdfstring{Iteration numbers for increasing mesh sizes and for increasing the number of layers $M$}{Iteration numbers for increasing mesh sizes and number of layers M}}
Since we apply the VMPT method in function space, the iteration count is expected to be independent of the spatial discretization level and of the number of layers $M$.
Here, the layer number $M$ can be seen as the degree of approximation of $W$ by $W_{\Delta}$.
Numerically, this  is examined by solving the test problem using the $a_k^1$ inner product without nesting.
In Table \ref{tab:MeshDependence}, the results for decreasing mesh widths are given, while in Table \ref{tab:MDependence}, the results for increasing $M$ are presented.
When varying $M$, we set $\varepsilon=0.04$ to allow for a coarser mesh, while keeping interface resolutions sufficient. This ensures that even for $M=100$, multiprocessing on the given office computer remains feasible.

\begin{table}[th!]
\centering
\begin{tabular}{|c|c|c|c|c|}
\hline
\textbf{Spacial Nodes} & 4961 & 19521 & 77441 & 308481 \\
\hline
\hline
\textbf{VMPT iterations} & 2477 & 2589 & 2490 & 2561 \\
\hline
\textbf{Mean PDAS iterations} & 3.13 & 3.49 & 3.65 & 4.02 \\
\hline
\end{tabular}
\caption{Test example computed unnested on different spatial grids}
\label{tab:MeshDependence}
\end{table}

\begin{table}[th!]
\centering
\begin{tabular}{|c|c|c|c|c|c|c|}
\hline
\textbf{M} & 10 & 20 & 40 & 50 & 100 \\
\hline
\hline
\textbf{DOFs} & 1.570M & 2.778M & 5.194M & 6.402M & 12.442M \\
\hline
\textbf{VMPT iterations} & 1198 & 736 & 712 & 698 & 741 \\
\hline
\textbf{Mean PDAS iterations} & 3.93 & 4.34 & 4.40 & 4.26 & 4.17 \\
\hline
\end{tabular}
\caption{Test example computed unnested with different numbers of layers}
\label{tab:MDependence}
\end{table}

When increasing the number of mesh points or the number of layers, the necessary VMPT iterations stay roughly the same, which confirms the expected behaviour.
It is worth noting that due to the good initial values for the PDAS subroutine, as described in Section \ref{DiscPdas}, that the average number of PDAS steps to compute a search direction stays roughly bounded. Especially in later VMPT iterations, no more than 3 PDAS steps are taken.

\subsubsection{Benefits of the nested ansatz and of variable metrics}
We conclude this subsection by demonstrating the potential benefits of nesting and variable metrics on the performance of the VMPT method. To this end, we compare the nested and unnested approaches with $a_k^1$ as well as the nested VMPT method with inner products
$a_k^2$ and $a_k^3$ 
containing second order information given in Section \ref{sec:VMPT}. The results are displayed in Table \ref{tab:SolverTest} and Figure \ref{fig:conv-j}.

\begin{table}[ht]
\centering
\begin{tabular}{|r |r |r |r |r |r |r |r |r |r |}
\hline \multicolumn{3}{|c|}{}& \multicolumn{1}{|c|}{unnested}& \multicolumn{3}{|c|}{nested} \\
\hline
\multicolumn{1}{|c|}{$M$}&\multicolumn{1}{|c|}{$\delta_H$} &  \multicolumn{1}{|c|}{DOFs}& \multicolumn{1}{|c|}{ $a^1_k$} & \multicolumn{1}{|c|}{ $a^1_k$} & \multicolumn{1}{|c|}{ $a^2_k$} & \multicolumn{1}{|c|}{ $a^3_k$} \\
\hline
\hline
$4$& ${1}/{40}$& 34~727  & - & 2242 & 1539 & 1433 \\
\hline
$8$& ${1}/{80}$& 214~731  & - & 90 & 68 & 88  \\
\hline
$10$& ${1}/{100}$& 395~213 & - & 80 & 118 & 76  \\
\hline
$10$& ${1}/{200}$& 1~570~413 & - & 12 & 28 & 12  \\
\hline
$10$& ${1}/{320}$& 4~010~253 & 2561 & 2 & 3 & 2  \\
\hline
\hline
\multicolumn{3}{|c|}{\textbf{VMPT iterations}} & 2561 & 2426 & 1748 & 1612   \\
\hline
\multicolumn{3}{|c|}{\textbf{Mean PDAS steps}} & 4.02 & 3.81 & 3.29 & 3.20   \\
\hline
\multicolumn{3}{|c|}{\textbf{CPU time}} & 36h 34min & 35min
& 1h 48min & 6h 7min   \\
\hline
\hline
\multicolumn{3}{|c|}{\textbf{$\tilde{E}$}} & 12.363875 & 13.238477 & 13.230681 & 13.234126  \\
\hline
\multicolumn{3}{|c|}{\textbf{$F$}} & 0.153840 & 0.148654 & 0.148824 & 0.148681 \\
\hline
\multicolumn{3}{|c|}{\textbf{$W$}} & 0.001505 & 0.001377 & 0.001375 & 0.001377   \\
\hline  
\end{tabular}
\caption{Test of the (nested) VMPT method with different inner products
  \label{tab:SolverTest}}
\end{table}

\begin{center}
\begin{figure}[th!]
\centering
  \begin{subfigure}{0.4\textwidth}
    \centering \includegraphics[width =\linewidth]{
      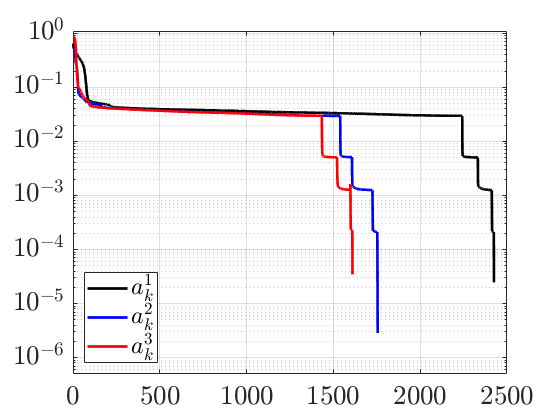}
    \caption{Error $|j(\varphi_k)-j(\varphi^\star)| $ for different $a_k$}
    \label{fig:conv-jmetrics}
  \end{subfigure}
  \qquad
  \centering
  \begin{subfigure}{0.4\textwidth}
    \centering \includegraphics[width =\linewidth]{
      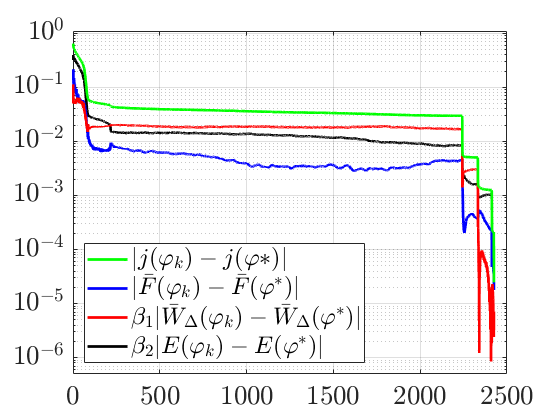}
    \caption{Errors for $j$, $E$, $\bar F$ and $\bar W$ using $a_k^1$}
    \label{fig:conv-ja1}
  \end{subfigure}
\caption{Error reductions in the scalar case}
\label{fig:conv-j}
\end{figure}
\end{center}

Nesting yields a clear speed-up in computation time compared to the unnested approach. 
Although roughly the same number of VMPT iterations
are necessary, in the nested algorithm most of these are performed on the coarsest grid.
The introduction of variable metrics shows a clear advantage in terms of necessary iteration numbers which nearly halved.
The average of 3 to 4 PDAS iterations remains similar in all cases. 
In contrast to the use of $a_k^1$, a quadratic problem with several linear PDEs as constraints has to be solved when 
a variable metric is employed.
For example using $a_k^3$, the arising QPs have
$M+1$ linearized state equations as constraints.
Hence, up to now no speed-up in CPU time is achieved. More sophisticated (PDE-) solvers may help to overcome this issue.

For all further numerical experiments, we employ the nested VMPT 
method with $a_k^1$ inner product.
In Figure \ref{fig:conv-j},
  the convergence behaviours for the reduced cost functional $j$ as well as
for $F$, $W$ and $E$  are plotted.
As reference solution the final iteration, denoted by $\varphi^\star$, is employed.
The progression of the error in $j$ behaves as is known for gradient methods.
Large decreases occur in the first steps of each nesting iteration.
While the error in $j$ is decreasing in accordance to the method monotonically, this does not hold for the summands $F$, $W$ and $E$ as they compete with each other.

\subsection{\texorpdfstring{Impact of the choice of $\beta_{1}$, $\beta_{2}$, $W$, $C^c$ and $\Gamma_B$ on the solutions}{Impact of the choice of beta1, beta2, Fc, Cc and GammaB on the solutions}}
\label{sec:ParameterStudy}

\subsubsection{Impact of the parameters \texorpdfstring{$\beta_{1}$}{beta1}, \texorpdfstring{$\beta_{2}$}{beta2} }
The impact of reducing the weighting $\beta_2$ of the perimeter regularization $E$ is shown in Table \ref{tab:Beta2ParameterStudy}, where $\beta_2$ is reduced from $0.08$ to $0.0002$ while keeping  $\beta_1 = 48$ fixed. As expected, both mean compliances $F$ and $W$ profit from the increase in perimeter $E$.
Figure \ref{fig:Beta2ParameterStudy} shows the emergence of internal filigree structures that increase resilience to the applied force $g$ and stabilize upper regions during construction.

\begin{table}[th!]
\centering
\begin{tabular}{|c|c|c|c|c|c|c|}
\hline
\textbf{$\beta_2$} & 0.08 & 0.02 & 0.01 & 0.002 & 0.0002 \\
\hline
\hline
\textbf{$\tilde{E}$} & 10.065890 & 13.238477 & 15.872777 & 26.981015 & 77.635747 \\
\hline
\textbf{$F$} & 0.185044 &0.148654 & 0.146341 & 0.133387 & 0.128814 \\
\hline
\textbf{$W$} & 0.001488 &0.001377 & 0.001304 & 0.001194 & 0.001172 \\
\hline
\end{tabular}
\caption{Parameter study for $\beta_2$}
\label{tab:Beta2ParameterStudy}
\end{table}
\vspace{-1cm}
\begin{center}
\begin{figure}[th!]
\centering
  \begin{subfigure}{0.32\textwidth}
    \centering \includegraphics[width=\linewidth,trim={0cm 6cm 0cm 5cm},clip]{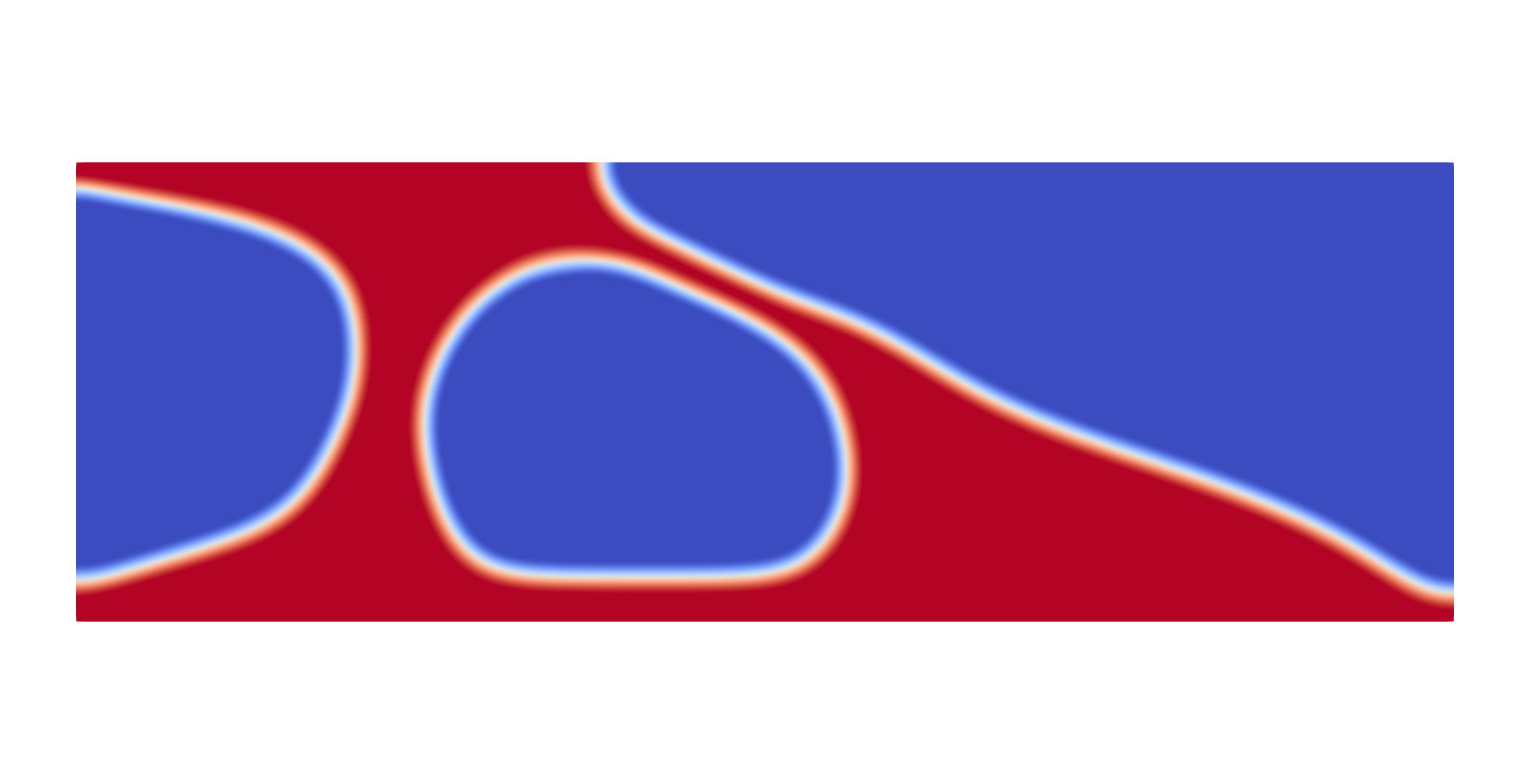}
    \caption{$\beta_2=0.08$}
    \label{fig:Beta2High}
  \end{subfigure}
  \begin{subfigure}{0.32\textwidth}
    \centering \includegraphics[width=\linewidth,trim={0cm 6cm 0cm 5cm},clip]{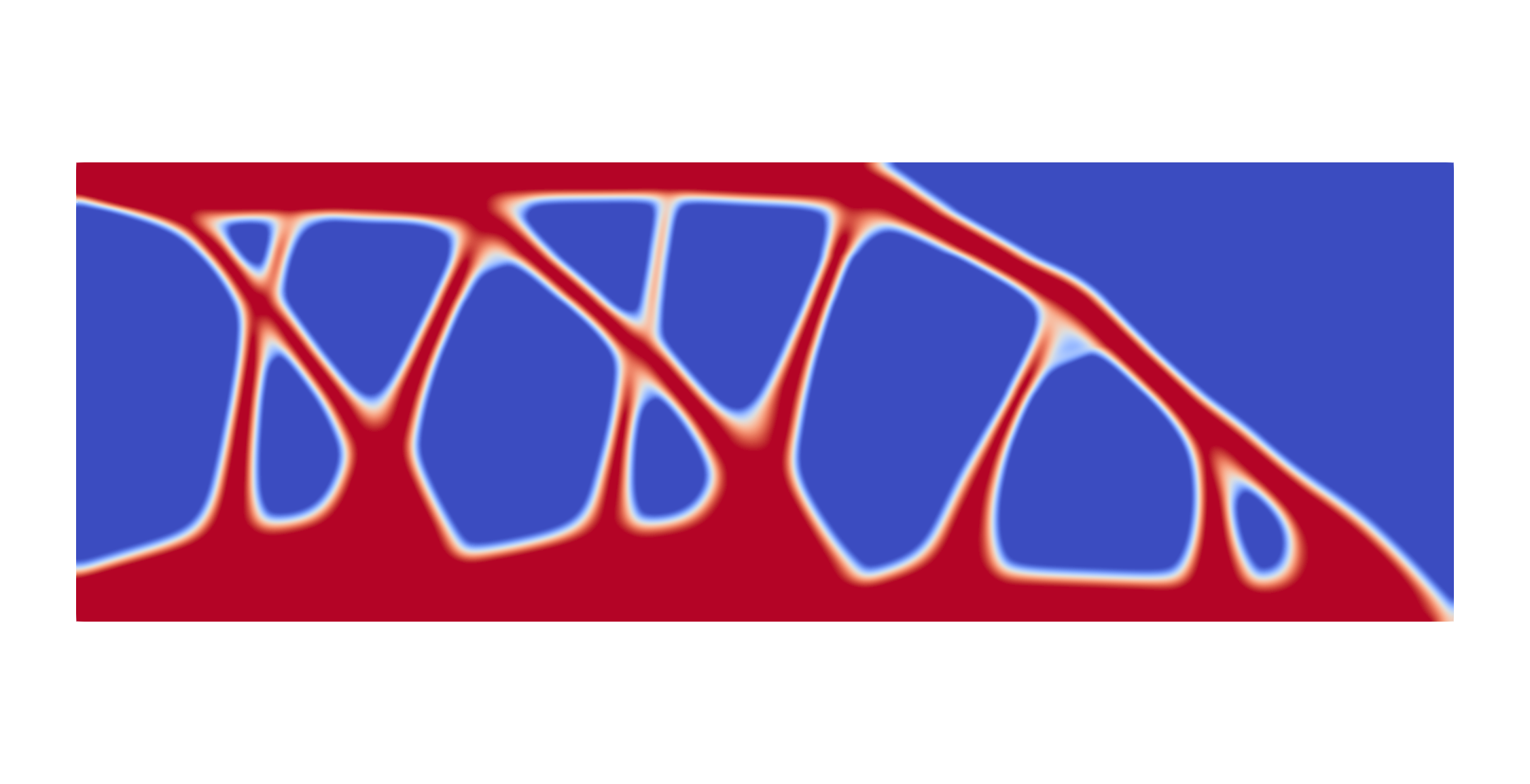}
    \caption{$\beta_2=0.002$}
    \label{fig:Beta2Low}
  \end{subfigure}
\caption{Test example with low and high perimeter}
\label{fig:Beta2ParameterStudy}
\end{figure}
\end{center}
In Table \ref{tab:Beta1ParameterStudy} and Figure \ref{fig:Beta1ParameterStudy},
the results are listed for increasing $\beta_{1}$,
  which is the parameter for  the penalization of the
  deformations during construction.
The parameter $\beta_2$ is fixed to $0.02$.
\begin{table}[th!]
\centering
\begin{tabular}{|c|c|c|c|c|c|c|}
\hline
\textbf{$\beta_1$} & 0                   & 48 & 96 & 384  \\
\hline
\hline
\textbf{$\tilde{E}$} & 11.728154 &13.238477 & 14.179275 & 16.229090  \\
\hline
\textbf{$F$} & 0.127871 &0.148654 & 0.163775 & 0.221231  \\
\hline
\textbf{$W$} & 0.011106 & 0.001377 & 0.001171 & 0.000871  \\
\hline
\end{tabular}
\caption{Parameter study for $\beta_1$}	
\label{tab:Beta1ParameterStudy}
\end{table}
\vspace{-1cm}
\begin{center}
\begin{figure}[th!]
\centering
  \begin{subfigure}{0.32\textwidth}
    \centering \includegraphics[width=\linewidth,trim={0cm 6cm 0cm 5cm},clip]{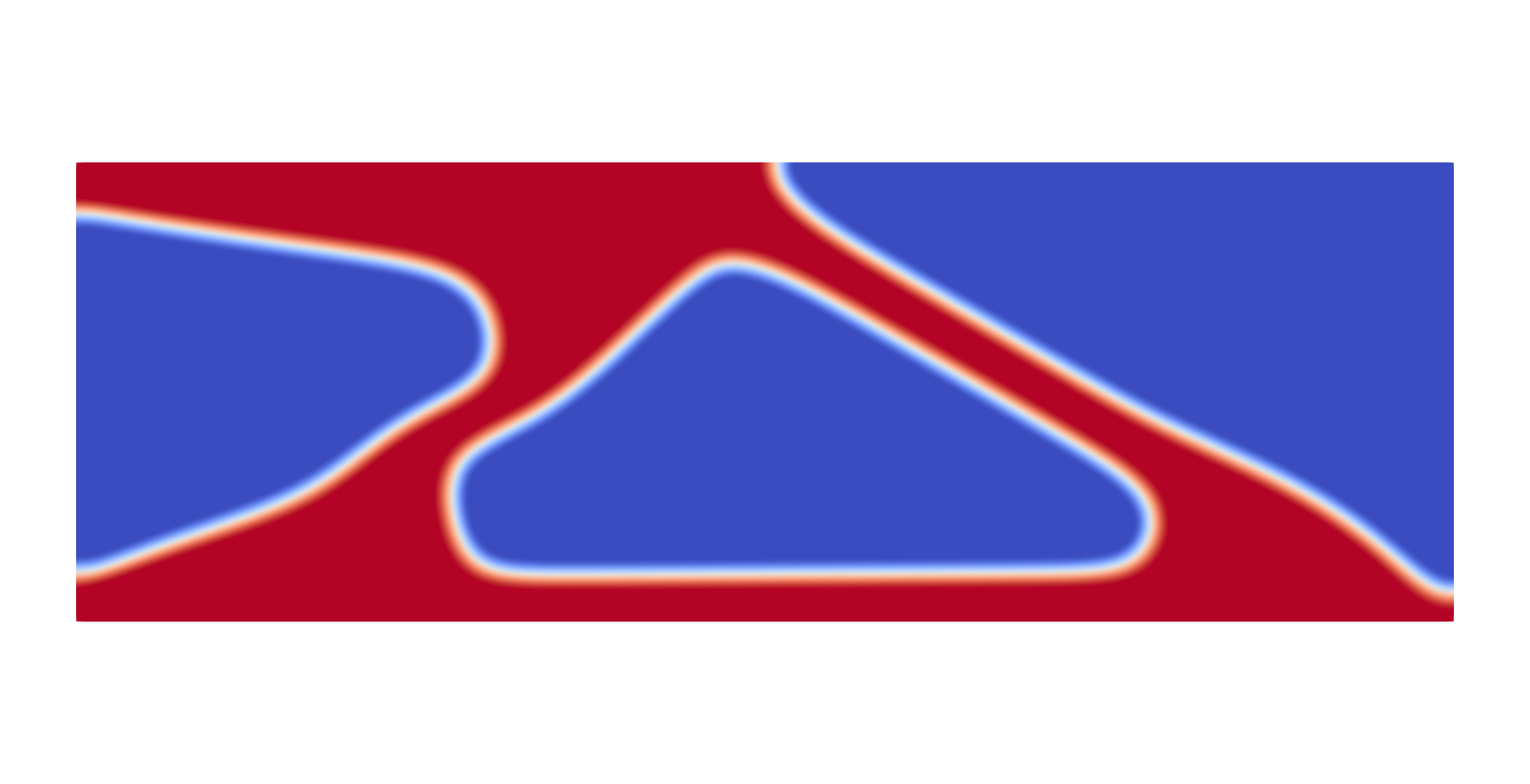}
    \caption{$\beta_1=0$}
    \label{fig:Beta1Zero}
  \end{subfigure}
  \begin{subfigure}{0.32\textwidth}
    \centering \includegraphics[width=\linewidth,trim={0cm 6cm 0cm 5cm},clip]{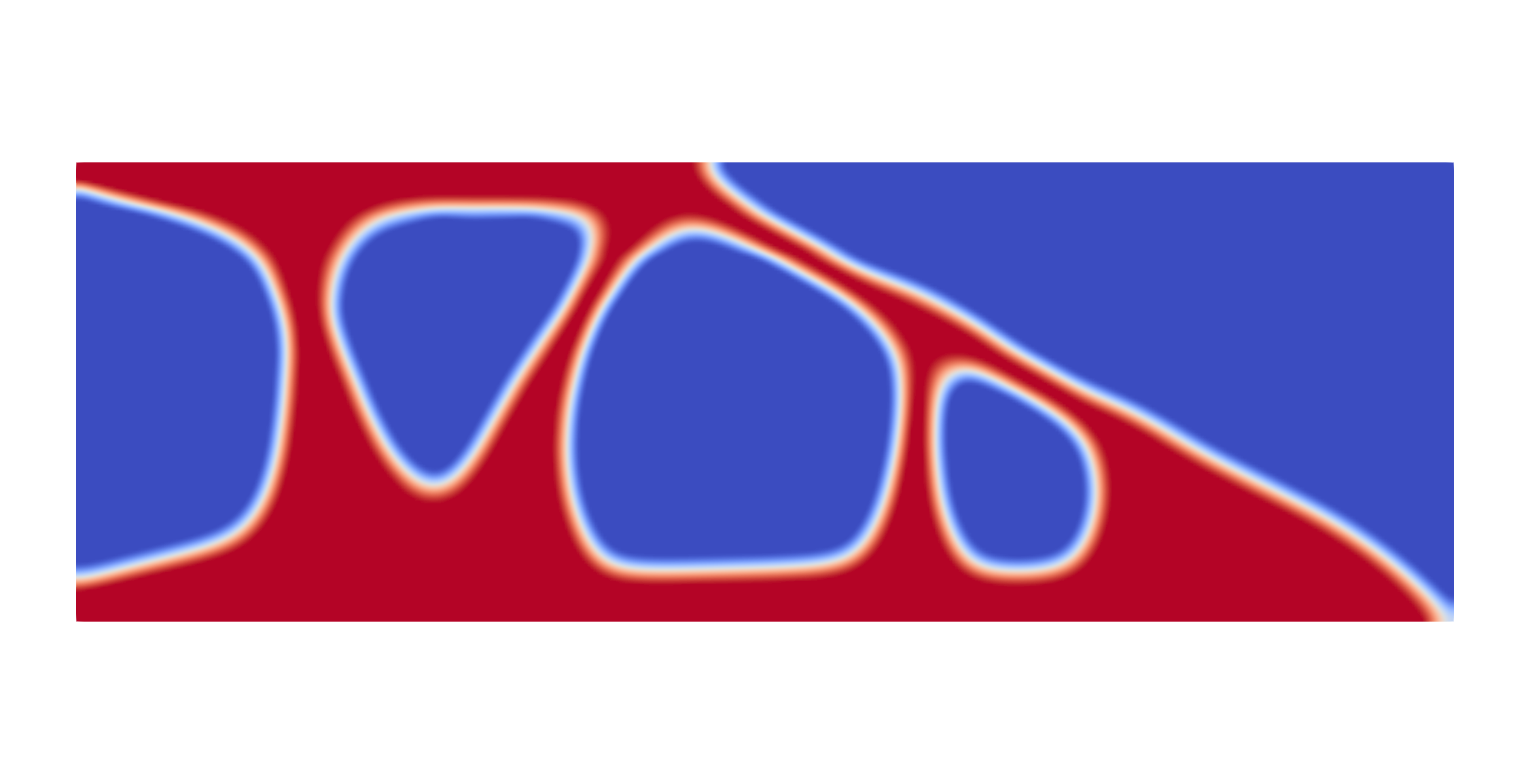}
    \caption{$\beta_1=96$}
    \label{fig:Beta1Low}
  \end{subfigure}
  \begin{subfigure}{0.32\textwidth}
    \centering \includegraphics[width=\linewidth,trim={0cm 6cm 0cm 5cm},clip]{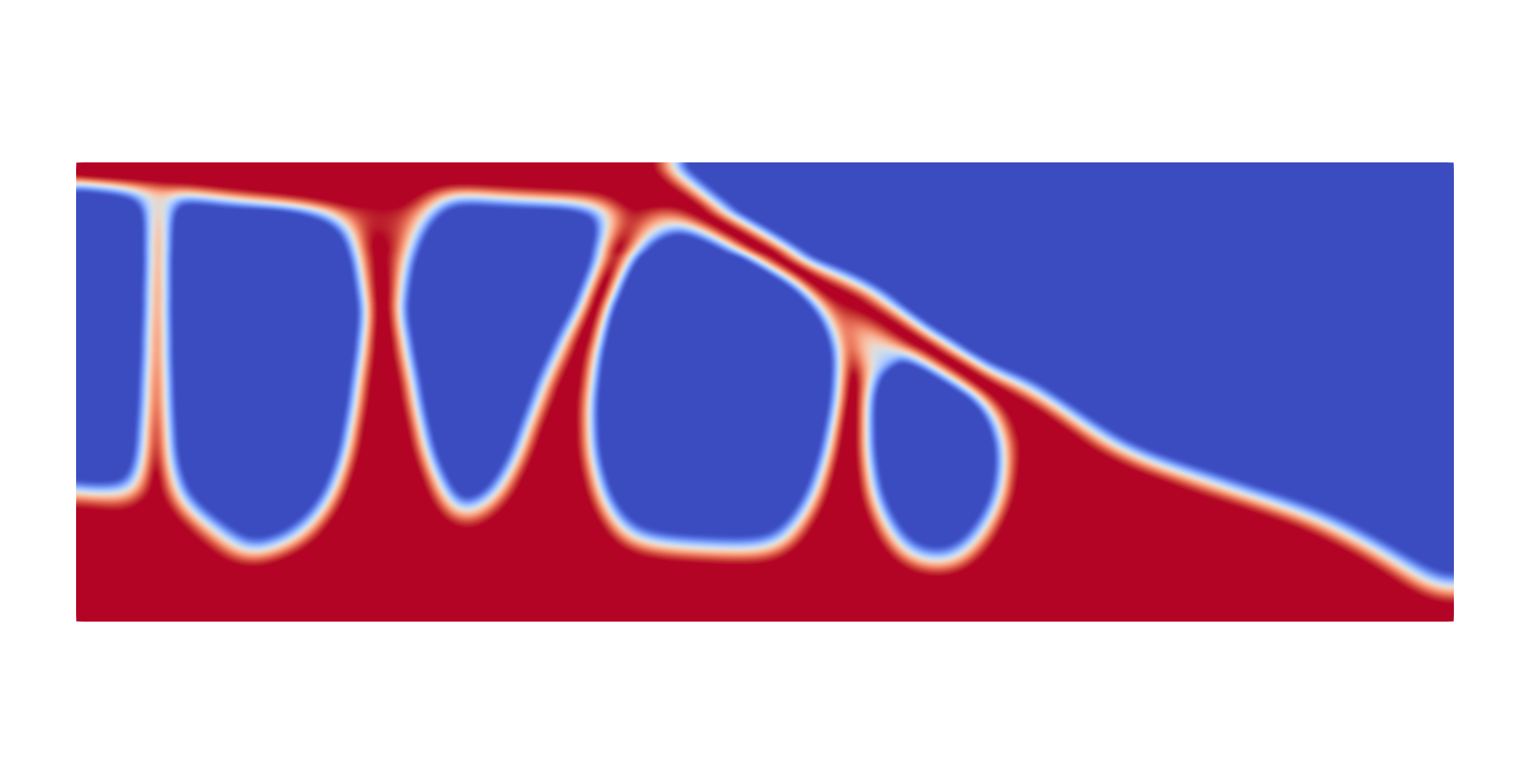}
    \caption{$\beta_1=384$}
    \label{fig:Beta1Med}
  \end{subfigure}
\caption{Test example with increasing $\beta_1$}
\label{fig:Beta1ParameterStudy}
\end{figure}
\end{center}
As expected, a large reduction in $W$ is achieved when $\beta_1$ is increased. In contrast to this, $E$ increases.
This is evident in the formation of multiple vertical bars
aiming to
 minimize the deformations during the construction phase,
 e.g. in the top left corner. Furthermore, high located trusses thin out, while mass gathers at the building plate to
 reduce deformations due to gravity. This leads to an increase in $F$.

When the construction process is neglected,
i.e. $\beta_1=0$, the solution features large overhangs
marked in Figure \ref{fig:Overhang0}
and with them large deformations 
visible  in  Figure \ref{fig:0_5_disp}- \ref{fig:0_10_disp},
which are reflected in a high value of $W$. 
\begin{center}
\begin{figure}[th!]
\centering
  \begin{subfigure}{0.32\textwidth}
    \centering \includegraphics[width=\linewidth,trim={0cm 6cm 0cm 5cm},clip]{
      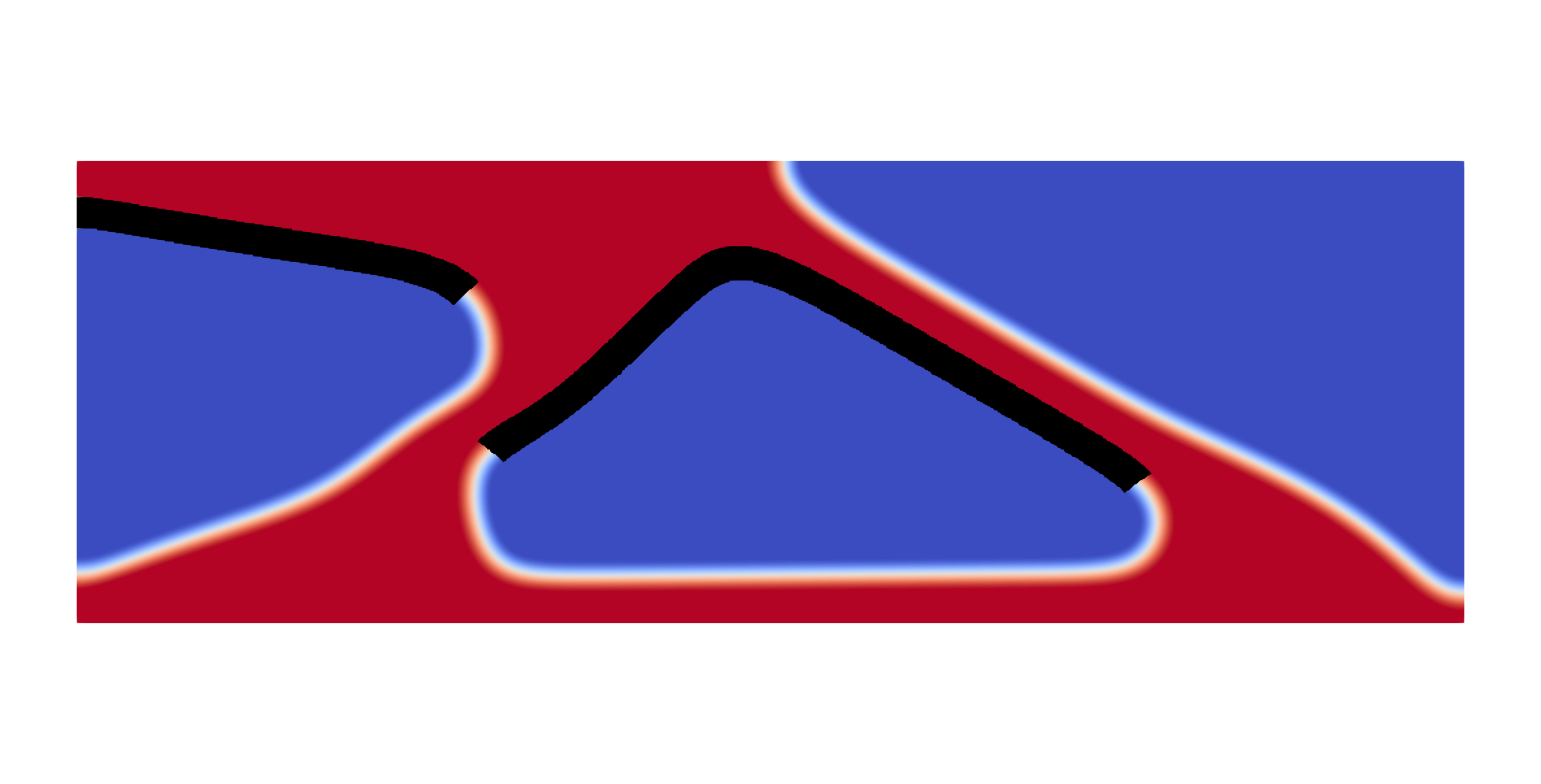}
    \caption{$\beta_1=0$}
    \label{fig:Overhang0}
  \end{subfigure}
  \qquad
  \begin{subfigure}{0.32\textwidth}
    \centering \includegraphics[width=\linewidth,trim={0cm 6cm 0cm 5cm},clip]{
      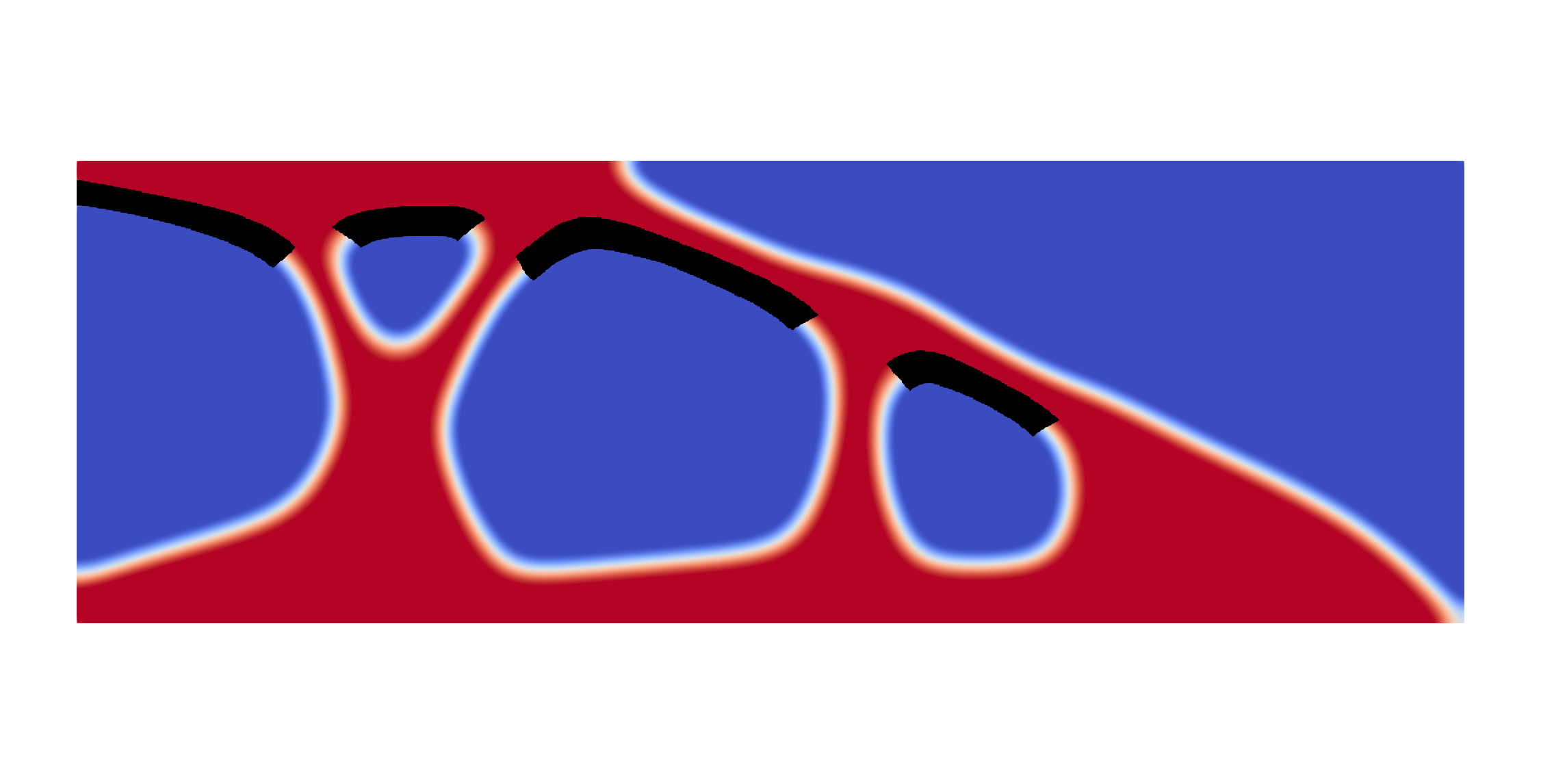}
    \caption{$\beta_1=48$}
    \label{fig:}
  \end{subfigure}
\caption{Overhangs violating the 45\textdegree angle condition marked black}
\label{fig:overhangV}
\end{figure}
\end{center}
\vspace{-1cm}
\begin{center}
\begin{figure}[th!]
\centering
  \begin{subfigure}{0.24\textwidth}
    \centering \includegraphics[width=\linewidth,clip]{
      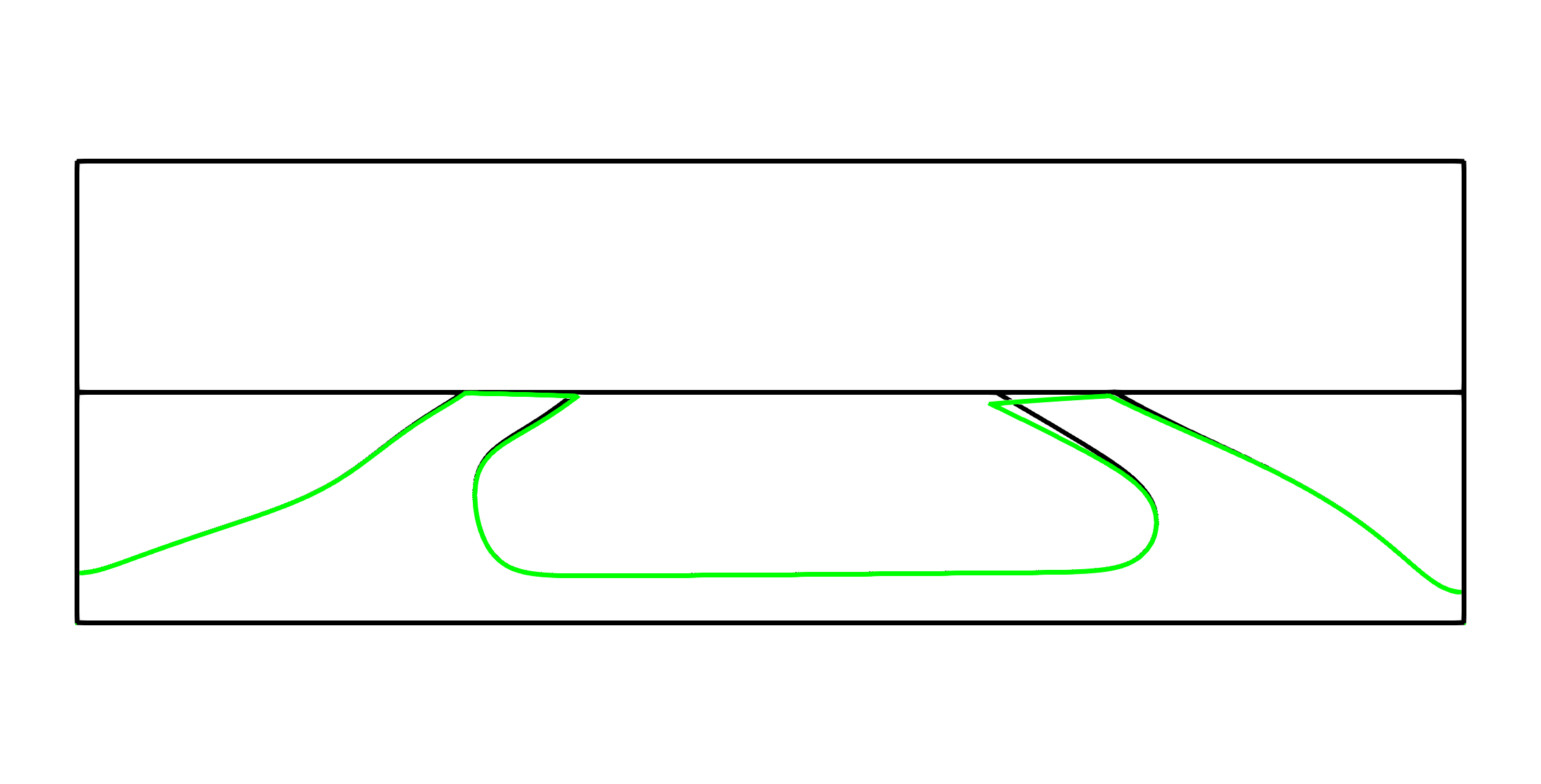}
    \caption{$\beta_1=0$, $k=5$}
    \label{fig:0_5_disp}
  \end{subfigure}
\centering
  \begin{subfigure}{0.24\textwidth}
    \centering \includegraphics[width=\linewidth,clip]{
      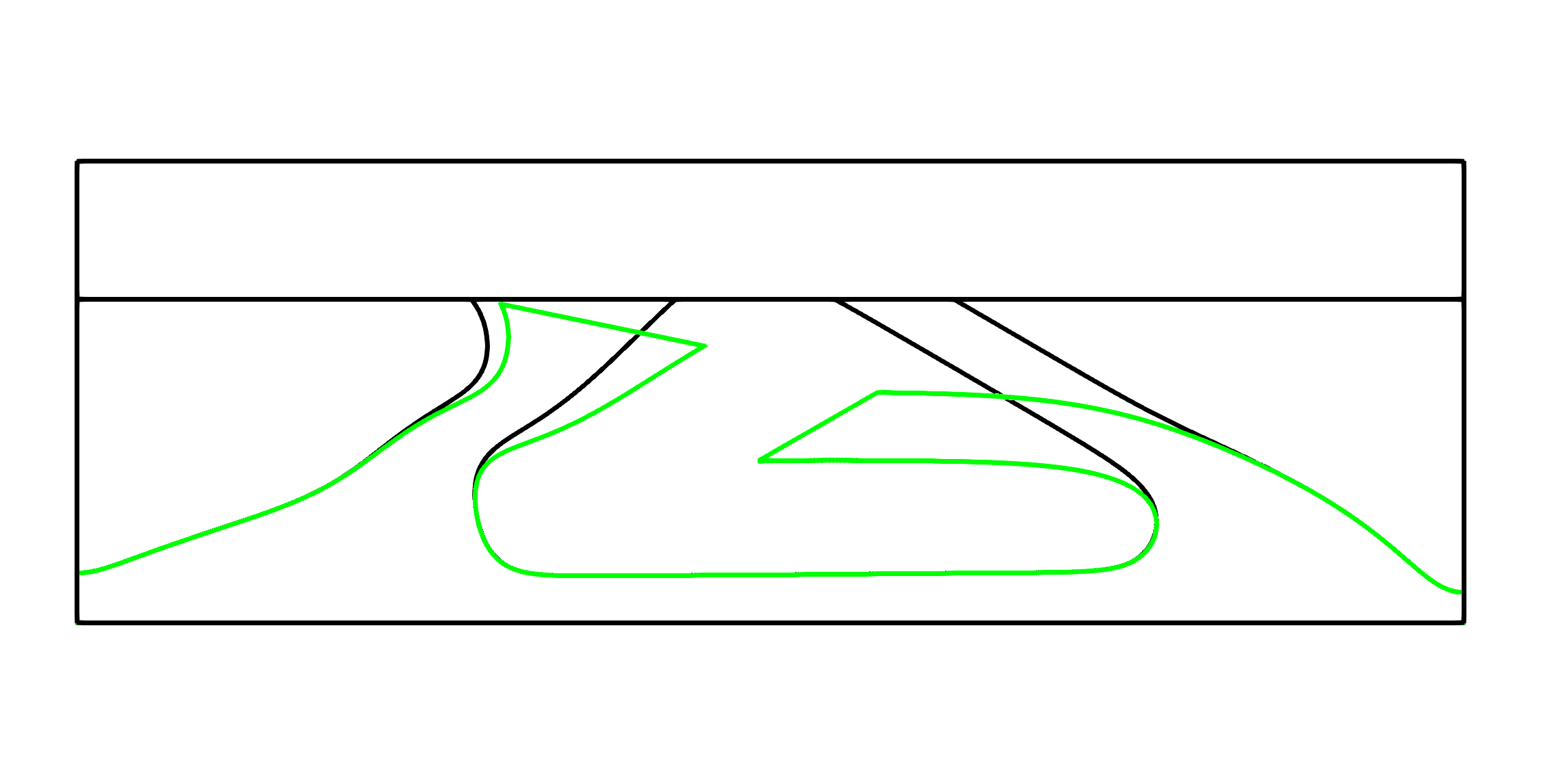}
    \caption{$\beta_1=0$, $k=7$}
    \label{fig:0_7_disp}
  \end{subfigure}
\centering
  \begin{subfigure}{0.24\textwidth}
    \centering \includegraphics[width=\linewidth,clip]{
      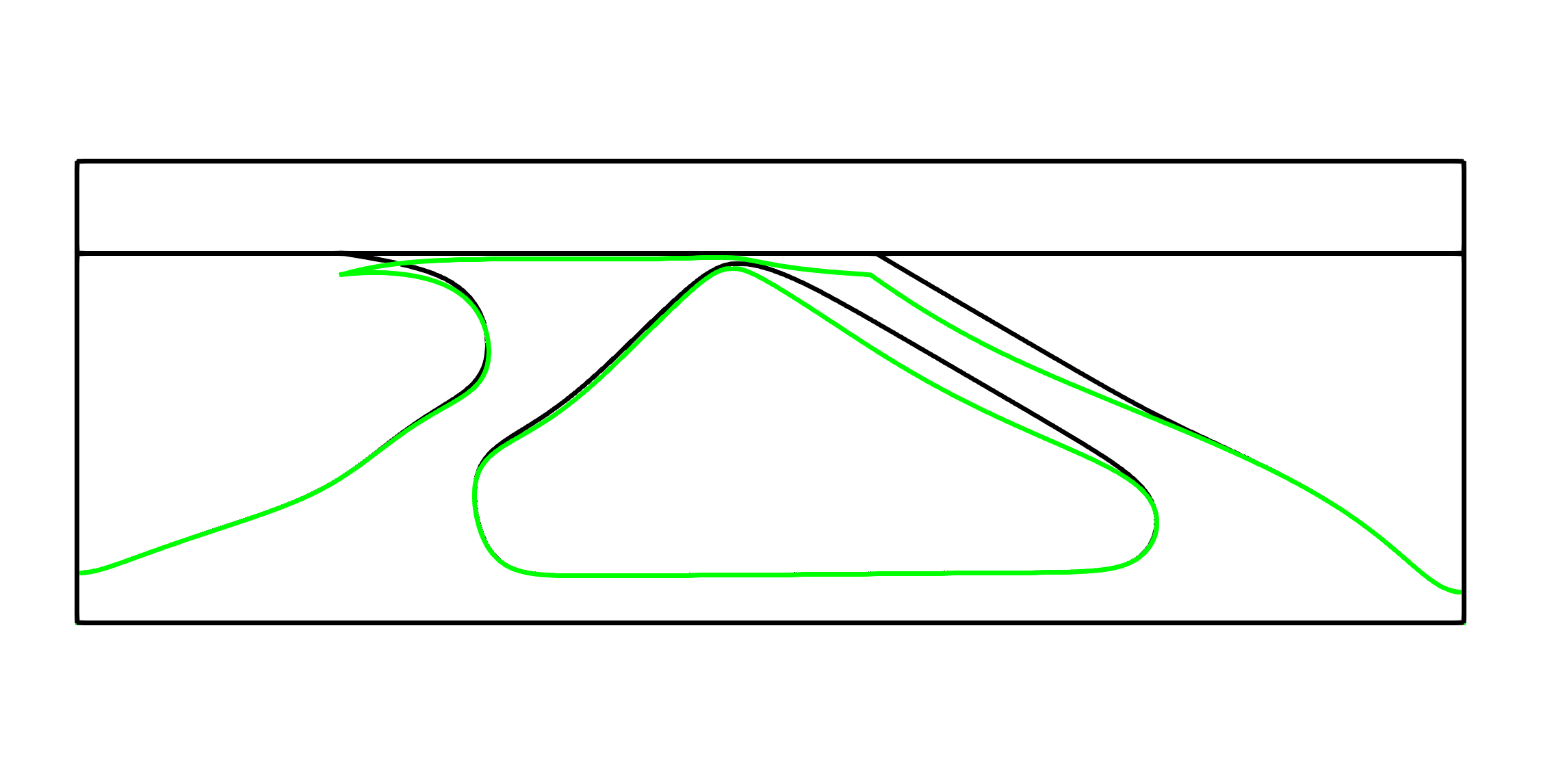}
    \caption{$\beta_1=0$, $k=8$}
    \label{fig:0_8_disp}
  \end{subfigure}
\centering
\begin{subfigure}{0.24\textwidth}
  \centering
  \includegraphics[width=\linewidth,clip]{
    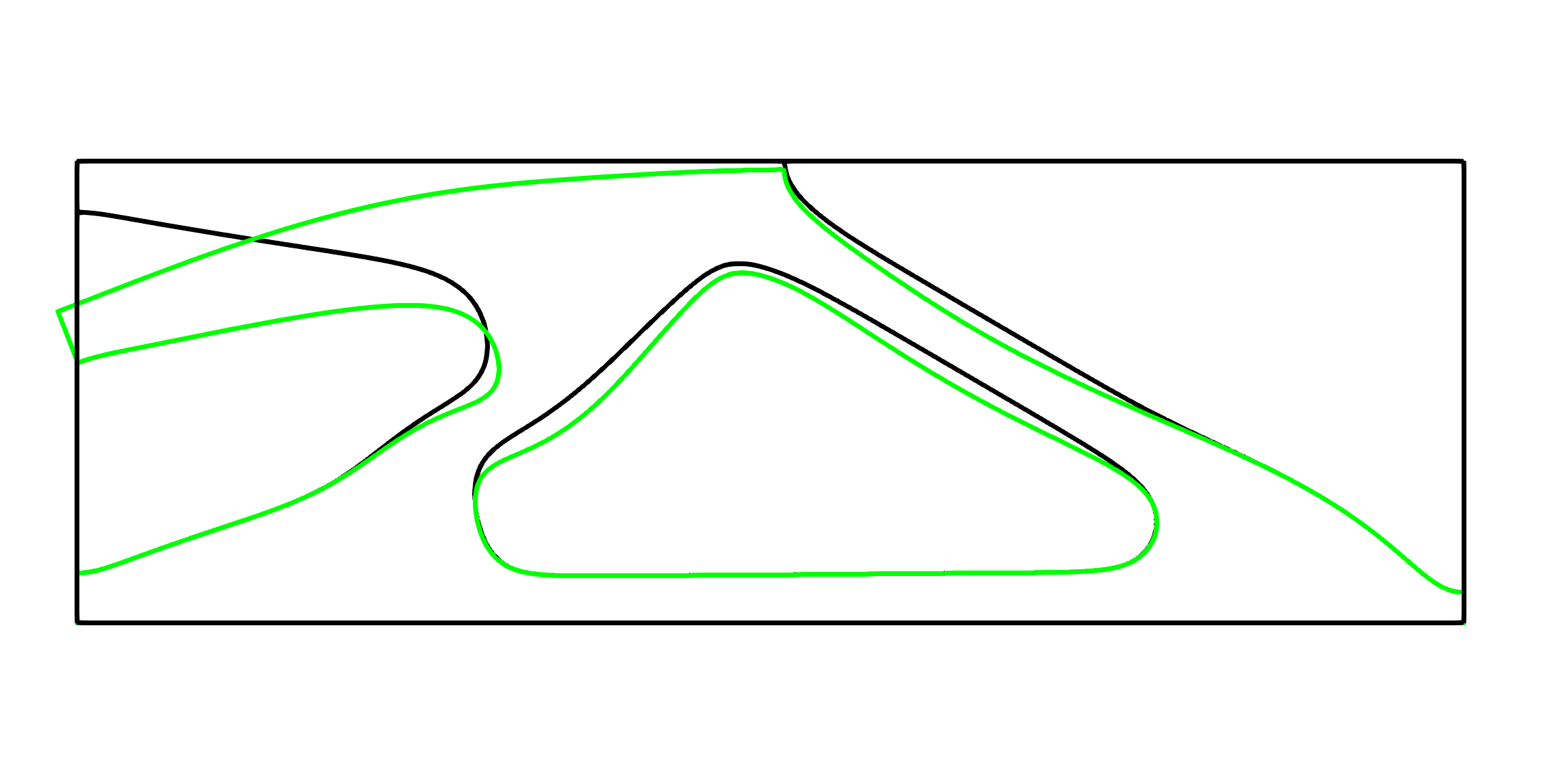}
    \caption{$\beta_1=0$, $k=10$}
    \label{fig:0_10_disp}
  \end{subfigure}
  
\centering
  \begin{subfigure}{0.24\textwidth}
    \centering \includegraphics[width=\linewidth,clip]{
      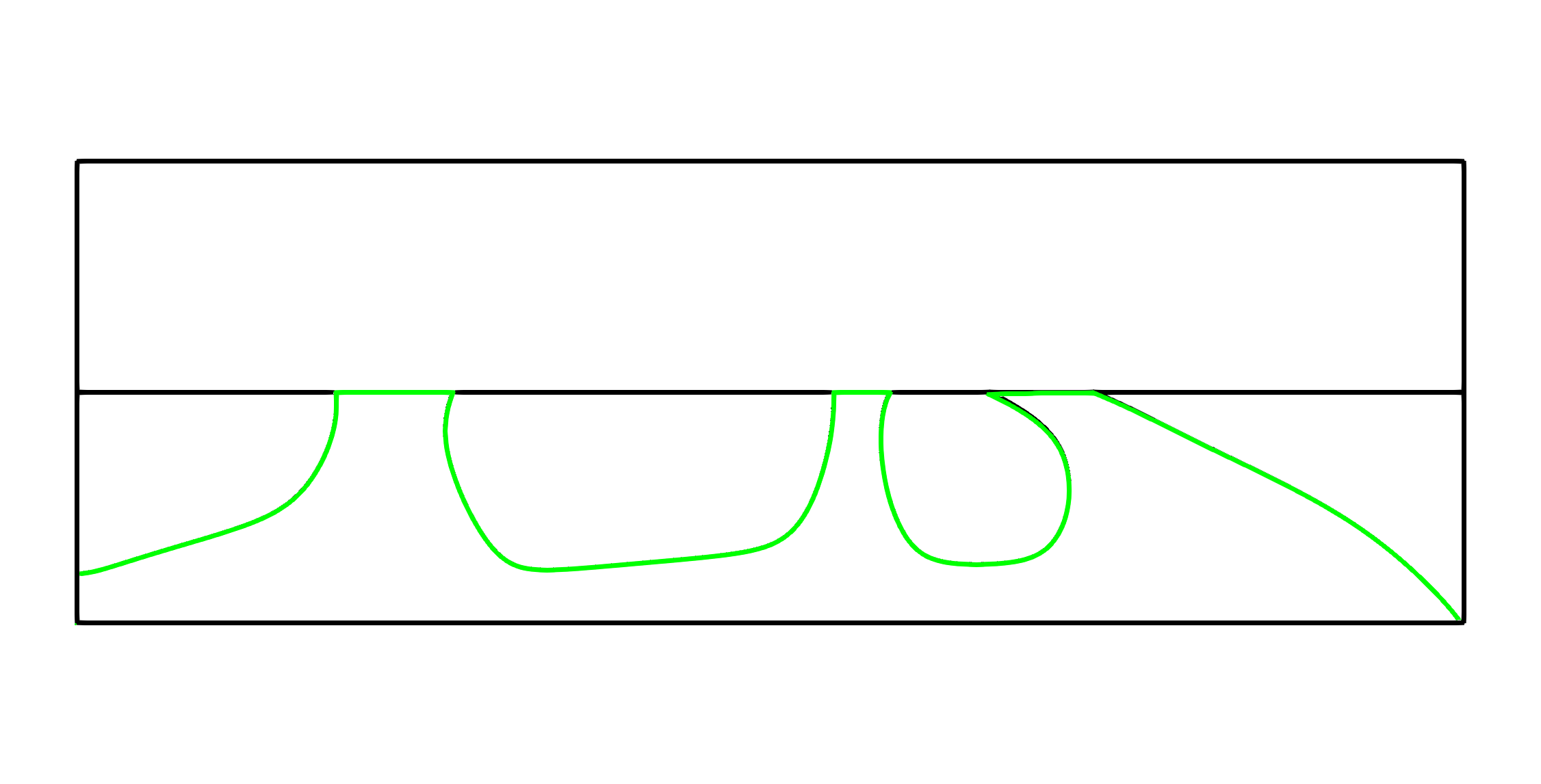}
    \caption{$\beta_1=48$, $k=5$}
    \label{fig:48_5_disp}
  \end{subfigure}
\centering
  \begin{subfigure}{0.24\textwidth}
    \centering \includegraphics[width=\linewidth,clip]{
      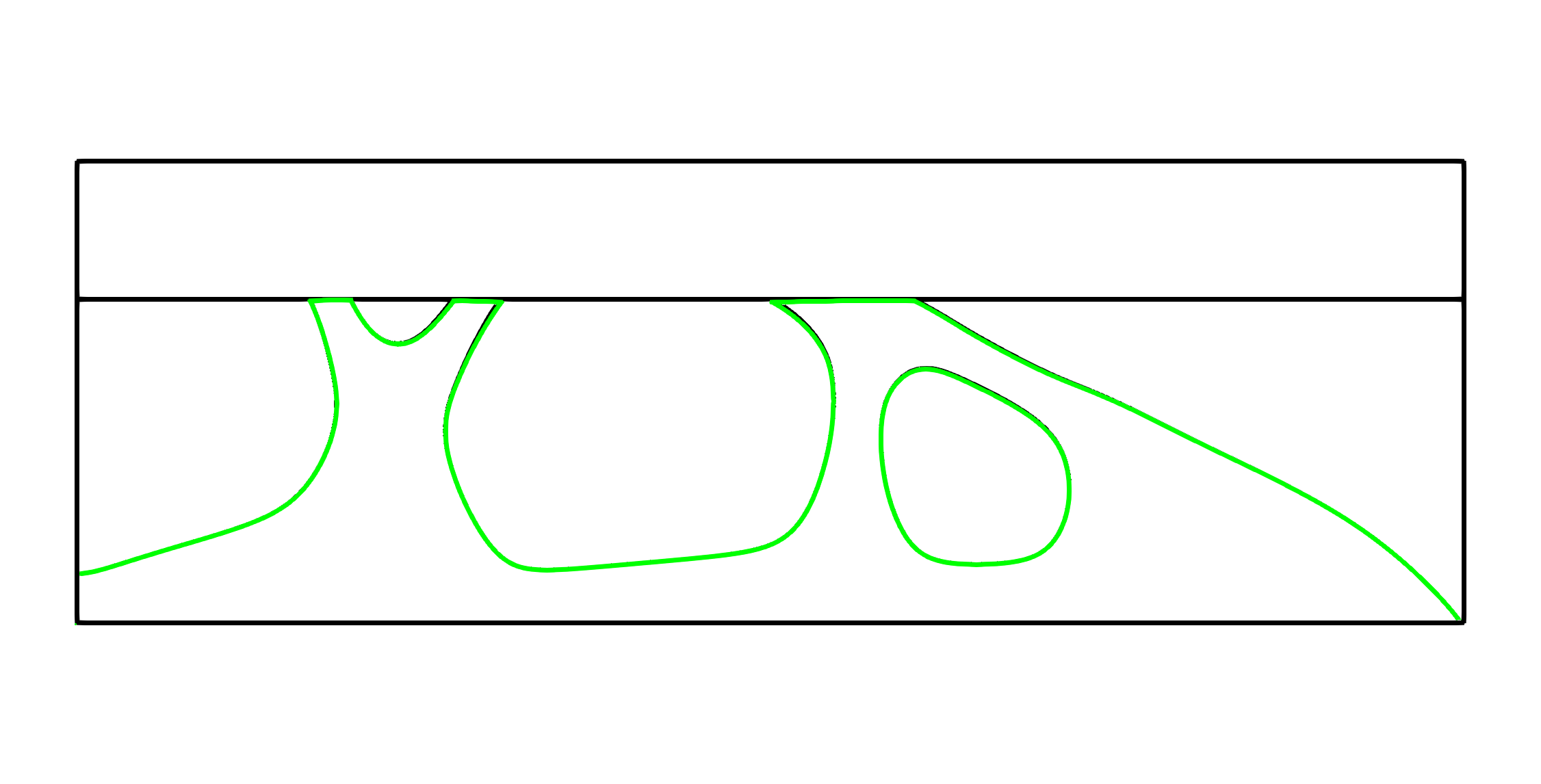}
    \caption{$\beta_1=48$, $k=7$}
    \label{fig:48_7_disp}
  \end{subfigure}
\centering
  \begin{subfigure}{0.24\textwidth}
    \centering \includegraphics[width=\linewidth,clip]{
      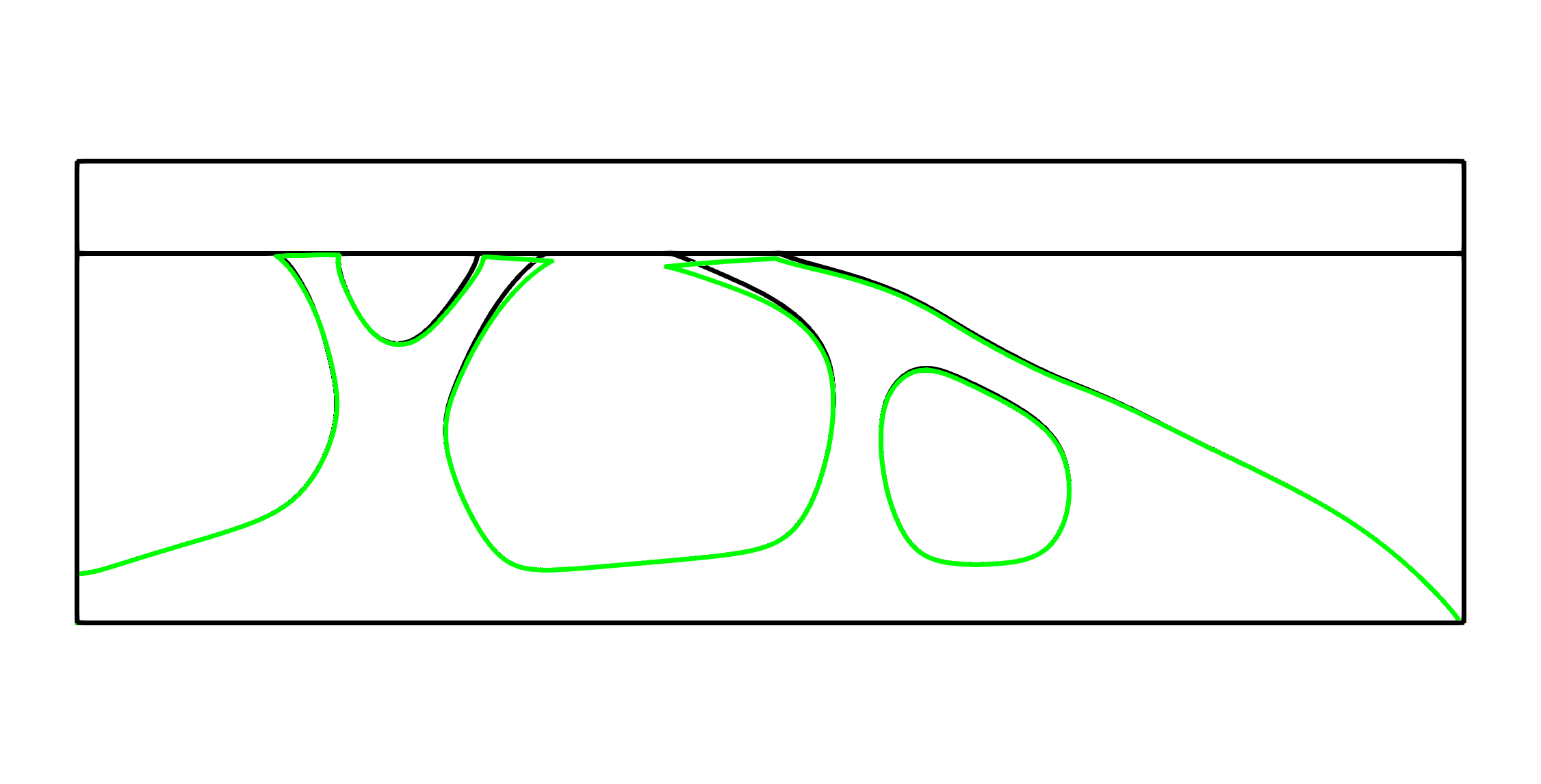}
    \caption{$\beta_1=48$, $k=8$}
    \label{fig:48_8_disp}
  \end{subfigure}
\centering
  \begin{subfigure}{0.24\textwidth}
    \centering \includegraphics[width=\linewidth,clip]{
      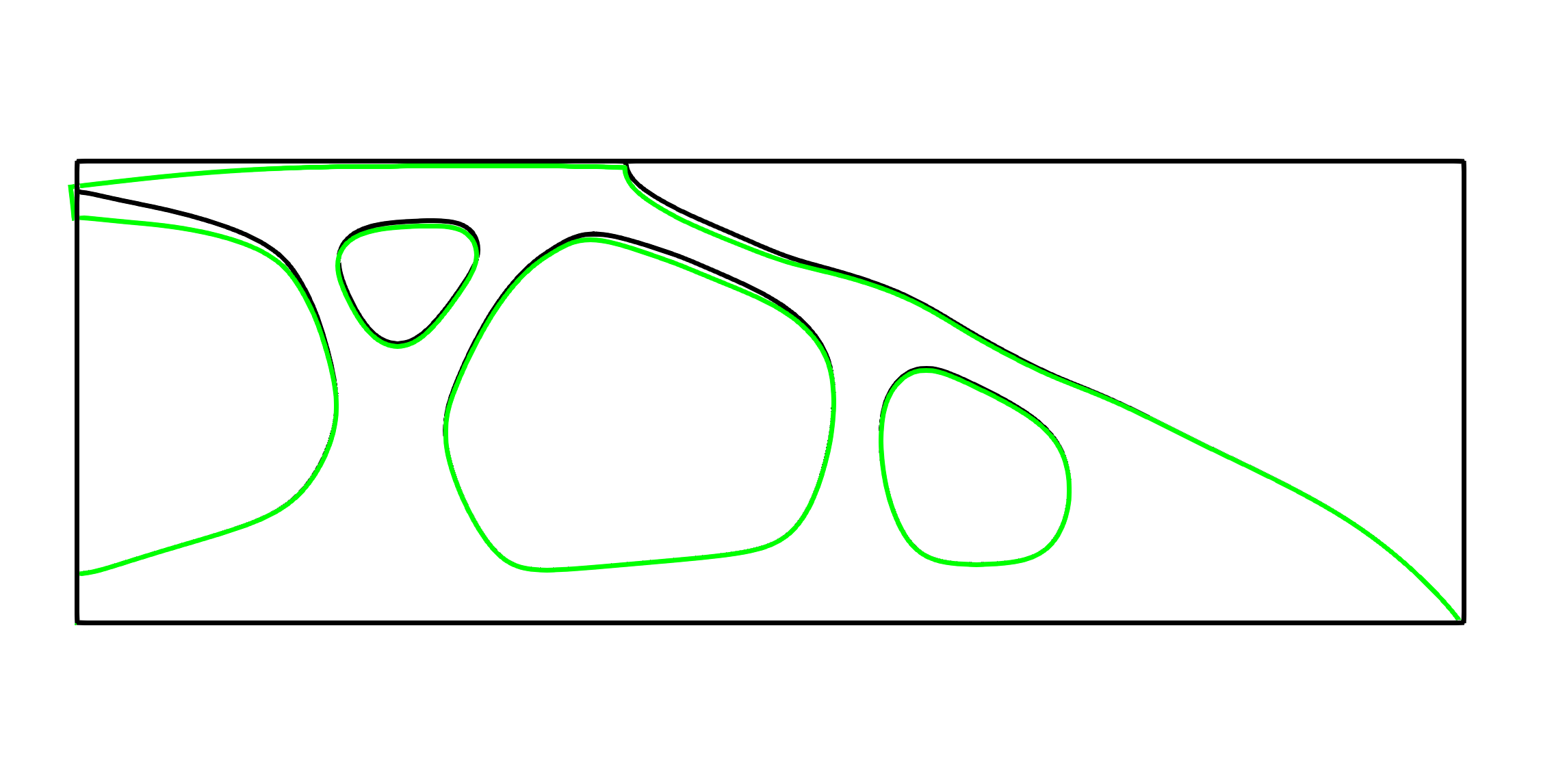}
    \caption{$\beta_1=48$, $k=10$}
    \label{fig:48_10_disp}
  \end{subfigure}
  
\caption{Level sets of intermediate structures (black) and their corresponding displaced version (green) for different $\Omega_{k\delta}$}
\label{fig:Displacements}
\end{figure}
\end{center}

In Figure \ref{fig:overhangV}, the black lines depict overhangs, which violate the
  often employed 45\textdegree  angle condition  for overhangs in additive manufacturing.
 In this paper, constructability is measured with the self-weights to control the deformations 
$u_k^c$,
which occur during the construction process.
Therefore in Figure \ref{fig:Displacements},
the structures (in black) and
the by gravity deformed structures (in green)
are presented at several
construction stages $\Omega_{k\delta }$,
where only the level set $\tilde \varphi (x)=0$
are plotted for better visibility.

\subsubsection{\texorpdfstring{Comparison of three different choices of $W$}{Comparison of three different choices of Fc}}\label{sec:StudyFc}
In Section \ref{sec:AnalysisOCP}, three different choices of $f^c$ and $\omega$ are discussed: we refer to the case $\omega(h)=\frac{1}{h}$, $f^c = f_{grav}$ as $W_1$, the case $\omega(h)\equiv 1$, $f^c = f_{grav}$ as $W_2$ and $\omega(h)= \frac{1}{h}$,
$f^c(h,.) = \chi_{\Omega_h\setminus \Omega_{h-\delta}} f_{grav}$ as $W_3$.
In Table \ref{tab:MDependenceWeighting}, the impact of increasing $M$ on the individual $W_i$, $i=1,2,3$ is given.
As before for increasing M, we set $\varepsilon = 0.04$. 
It is observed that $W_3$ approximately halves as $M$ doubles, which is consistent with the result $\lim_{M\rightarrow\infty} W_3 = 0$ derived in  Section \ref{sec:AnalysisOCP}. In contrast to this, $W_1$ and $W_2$ decrease only slightly, and $W_1$ stabilizes with increasing $M$. 

\begin{table}[ht!]
\centering
\begin{tabular}{|c|c|c|c|c|c|c|}
\hline
\textbf{$M$} & 10 & 20 & 40 & 50 & 100 \\
\hline
\hline
\textbf{$W_1$} & 1.478E-3 & 1.335E-3 & 1.265E-3 & 1.254E-3 & 1.253E-3 \\
\hline
\textbf{$W_2$} & 1.265E-3 & 1.114E-3 & 1.068E-3 & 1.053E-3 & 1.023E-3 \\
\hline
\textbf{$W_3$} & 7.245E-4 & 3.573E-4 & 1.127E-4 & 8.870E-5 & 3.894E-5 \\
\hline
\end{tabular}
\caption{Impact of the layer numbers on different choices of $W$}
\label{tab:MDependenceWeighting}
\end{table}

The shapes and topologies in the case of $W_1$ and $W_2$ seem to change very little for increasing $M$,
see Figure \ref{fig:WeighingMImpact}. 
When using $W_3$,
  the structure changes with $M$.
  For $M=100$, the result closely resembles
  the case of $\beta_1=0$,
  i.e. when the construction phase is neglected.
  This is in accordance with the analytical result
  $\lim_{M\to\infty}\bar W_\Delta (\varphi)=0$
  for the choice of $W_3$,  given
   in Section \ref{sec:AnalysisOCP}.
 Hence using $W_3$  given in \cite{WIAS},
 the penalization of the deformations
 during the construction phase shrink
  for increasing $M$.
\begin{center}
\begin{figure}[th!]
\centering
  \begin{subfigure}{0.32\textwidth}
    \centering \includegraphics[width=\linewidth,trim={0cm 6cm 0cm 5cm},clip]{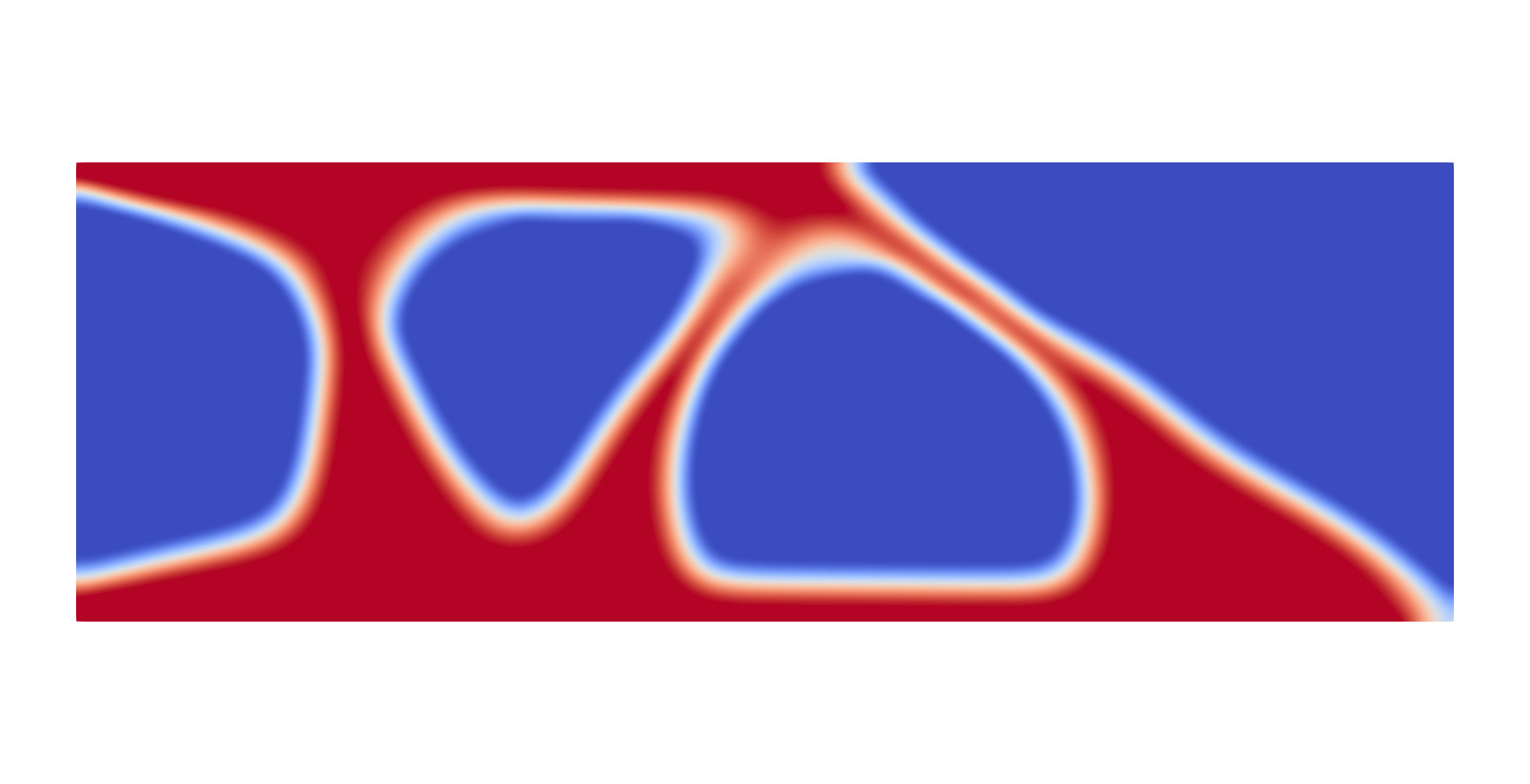}
    \caption{$W_1$, $M=10$}
    \label{fig:meanweighted10}
  \end{subfigure}
\centering
  \begin{subfigure}{0.32\textwidth}
    \centering \includegraphics[width=\linewidth,trim={0cm 6cm 0cm 5cm},clip]{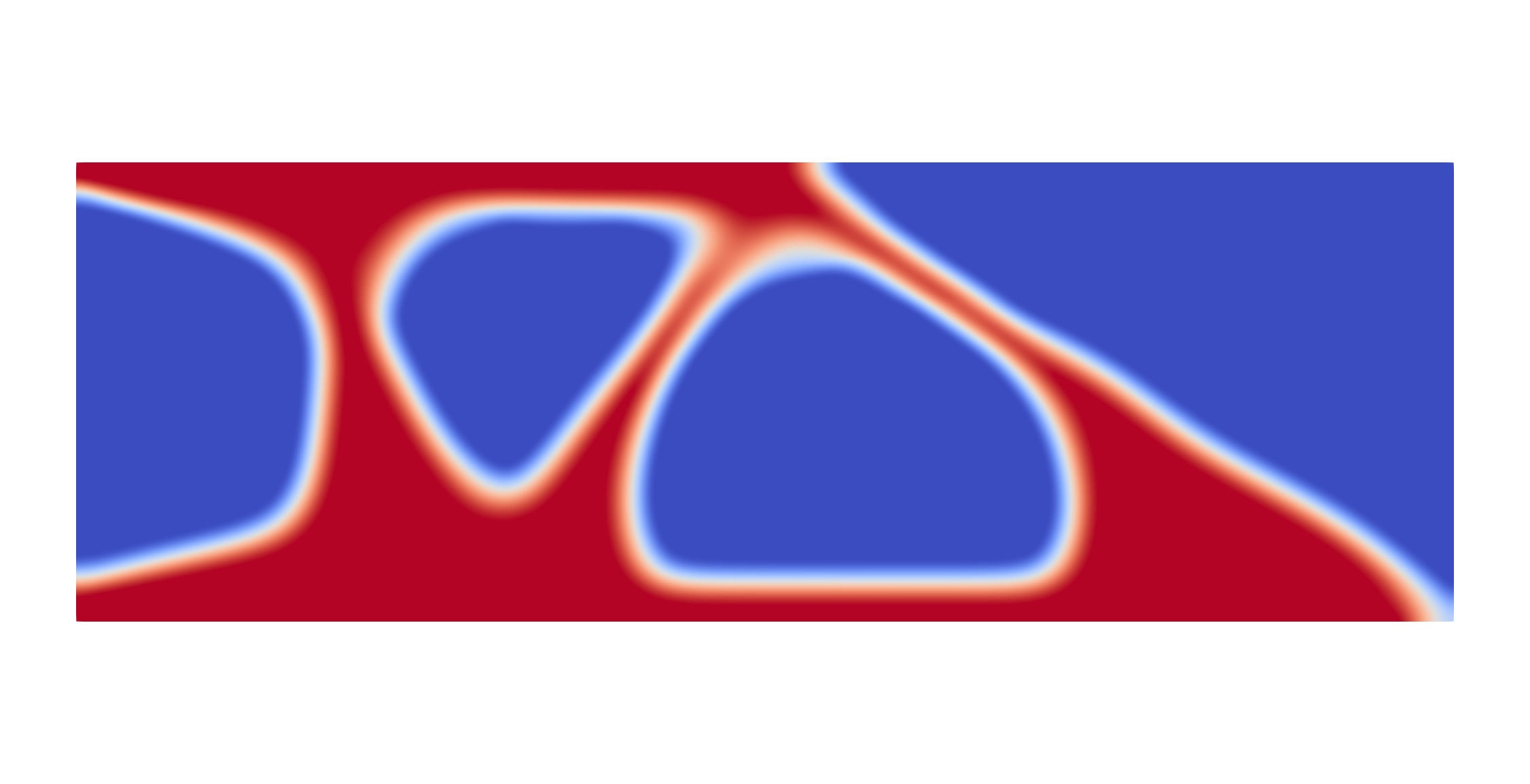}
    \caption{$W_2$, $M=10$}
    \label{fig:unweighted10}
  \end{subfigure}
\centering
  \begin{subfigure}{0.32\textwidth}
    \centering \includegraphics[width=\linewidth,trim={0cm 6cm 0cm 5cm},clip]{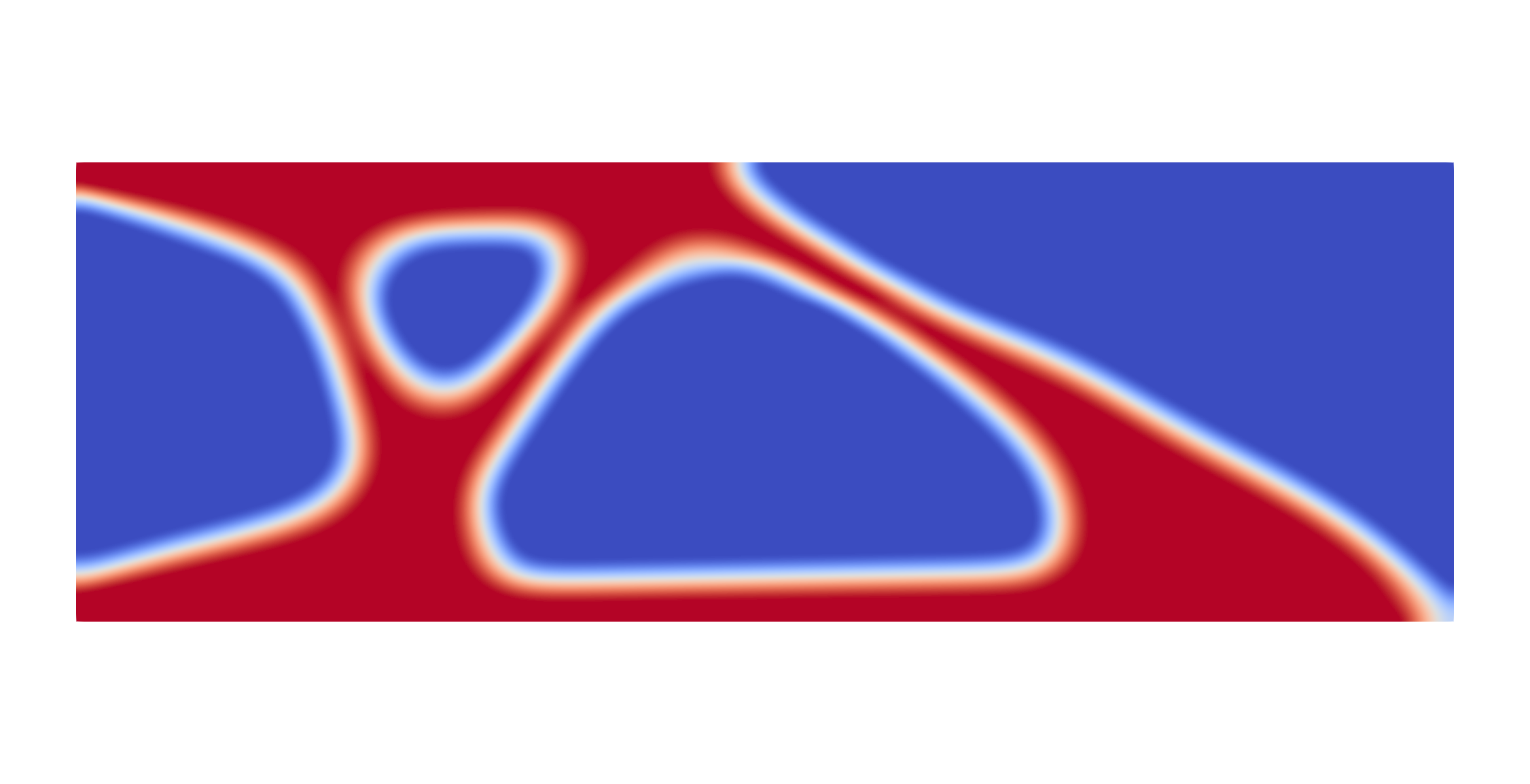}
    \caption{$W_3$, $M=10$}
    \label{fig:layerweighted10}
  \end{subfigure}

\centering
  \begin{subfigure}{0.32\textwidth}
    \centering \includegraphics[width=\linewidth,trim={0cm 6cm 0cm 5cm},clip]{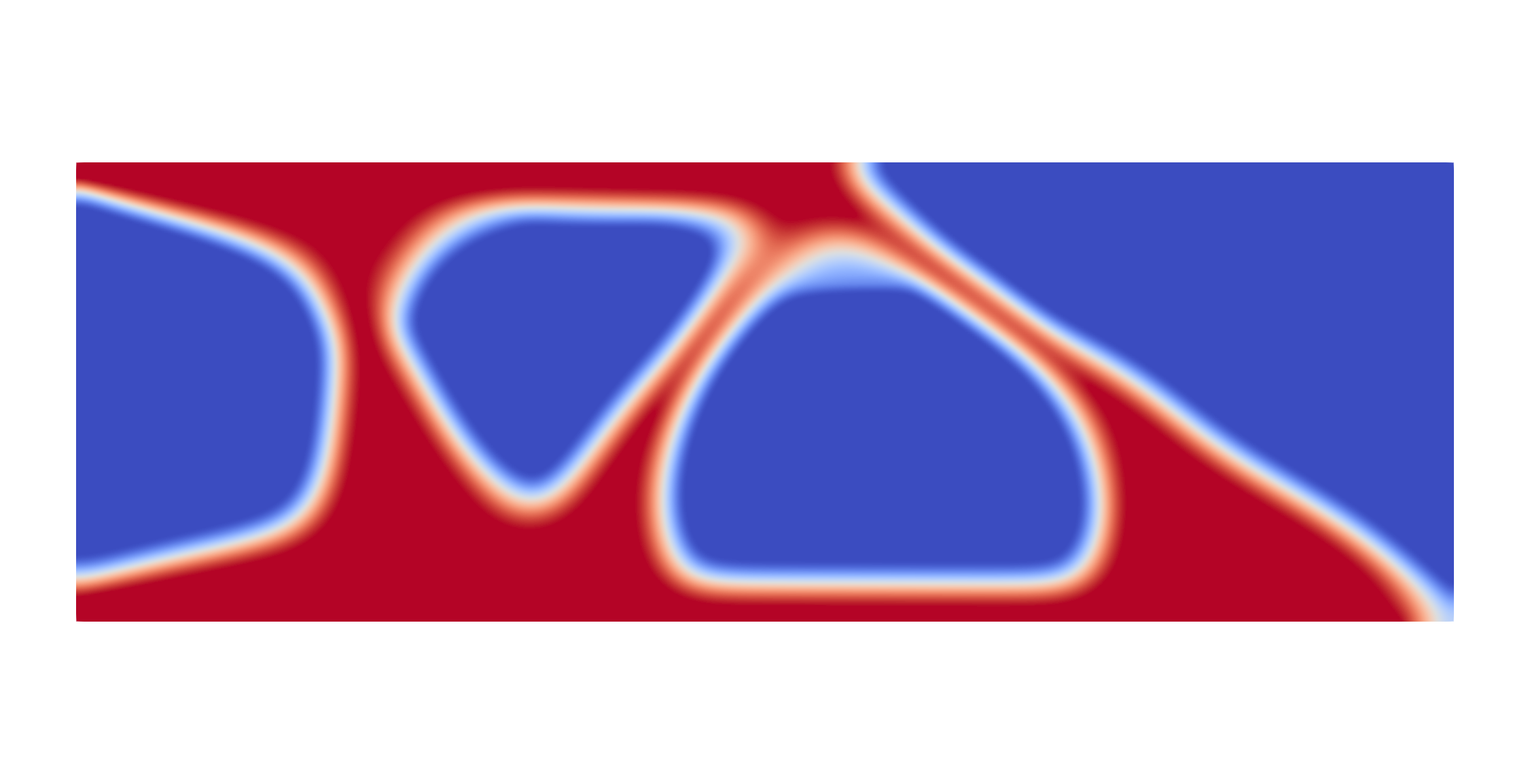}
    \caption{$W_1$, $M=100$}
    \label{fig:meanweighted100}
  \end{subfigure}
\centering
  \begin{subfigure}{0.32\textwidth}
    \centering \includegraphics[width=\linewidth,trim={0cm 6cm 0cm 5cm},clip]{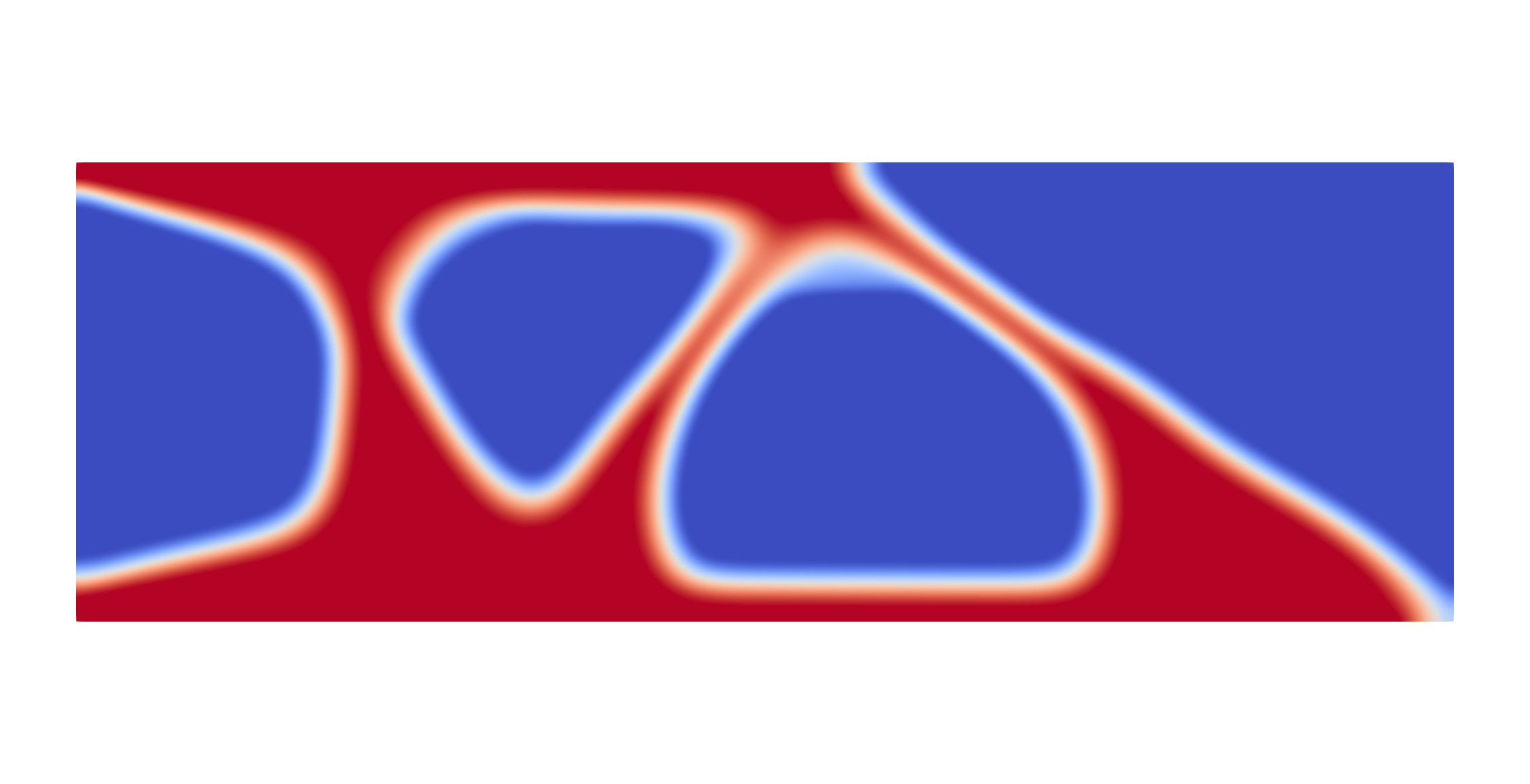}
    \caption{$W_2$, $M=100$}
    \label{fig:unweighted100}
  \end{subfigure}
\centering
  \begin{subfigure}{0.32\textwidth}
    \centering \includegraphics[width=\linewidth,trim={0cm 6cm 0cm 5cm},clip]{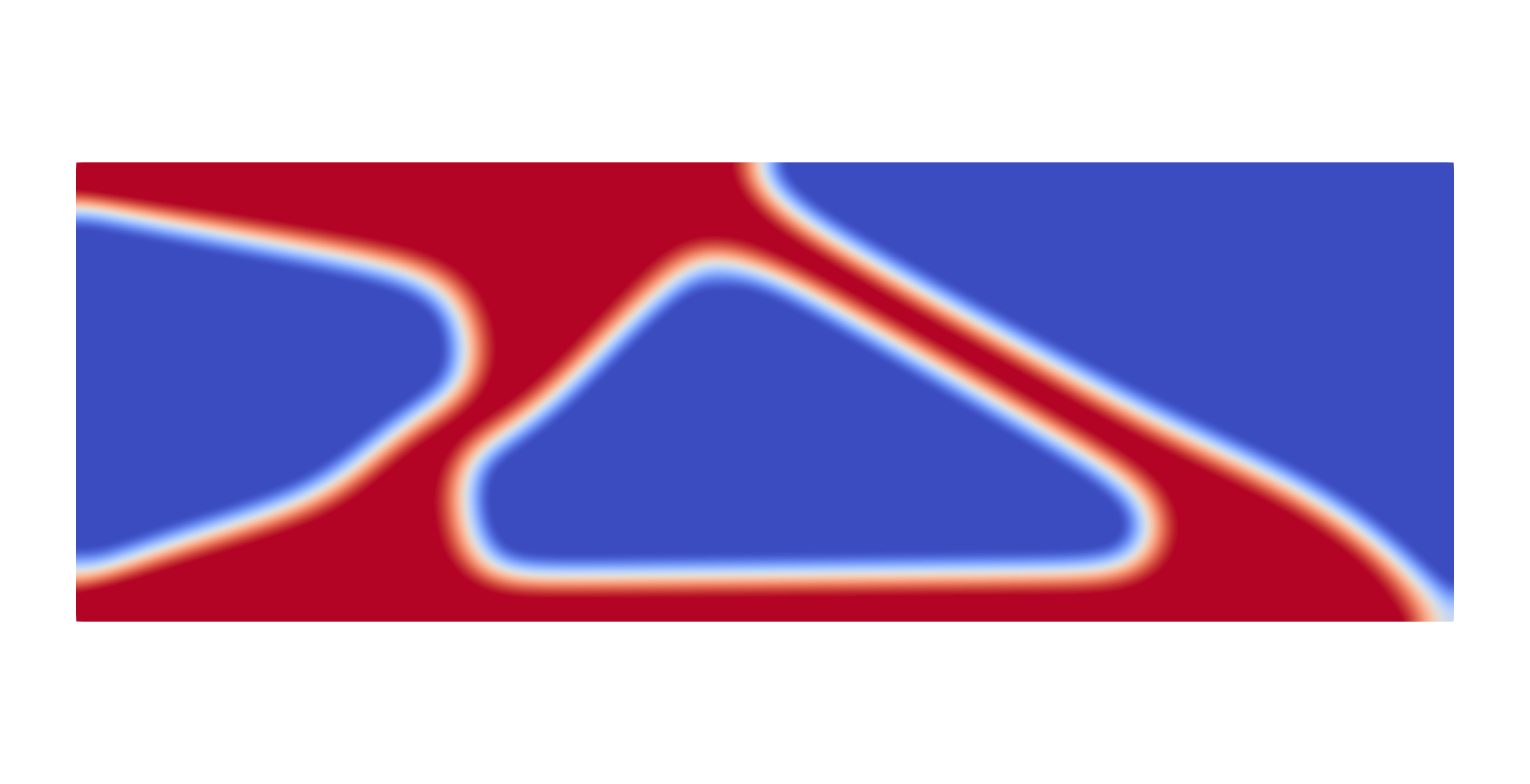}
    \caption{$W_3$, $M=100$}
    \label{fig:layerweighted100}
  \end{subfigure}
\caption{Comparison of different $W_i$ with increasing $M$ for the test example}
\label{fig:WeighingMImpact}
\end{figure}
\end{center}
To point out the  different impacts of
$W_1$ and of $W_2$,
we vary the test setting by relocating
the load $g$ to the right hand side of the cantilever, i.e. to
$\Gamma_N=\{3\}\times[0.4,0.6]$.
To balance the terms in the cost functional, we choose
and we set $\beta_1=20$, $\beta_2=0.0008$ and $M=20$. 
The results are depicted in Figure \ref{fig:WeighingImpactStructures}.
The structures obtained including the construction phase deviate strongly from the solution without AM. 
Furthermore, both structures with AM
feature two supports for the bar in the lower right corner where the outer force is acting.
These supports seem to be slightly thicker in case of $W_1$.
In Figure \ref{fig:3Pcomp1}, more vertical bars
can be seen supporting the top regions compared to Figure \ref{fig:3Pcomp2}, where supports seem to be more equally distributed, for example, in the large void region in the center of the domain. 
This affirms the claim in \cite{AllaireOverview1,WIAS}, that
$W_2$ penalizes higher located
overhangs
stronger than lower ones.
\begin{center}
\begin{figure}[th!]
\centering
  \begin{subfigure}{0.32\textwidth}
    \centering \includegraphics[width=\linewidth,trim={0cm 6cm 0cm 5cm},clip]{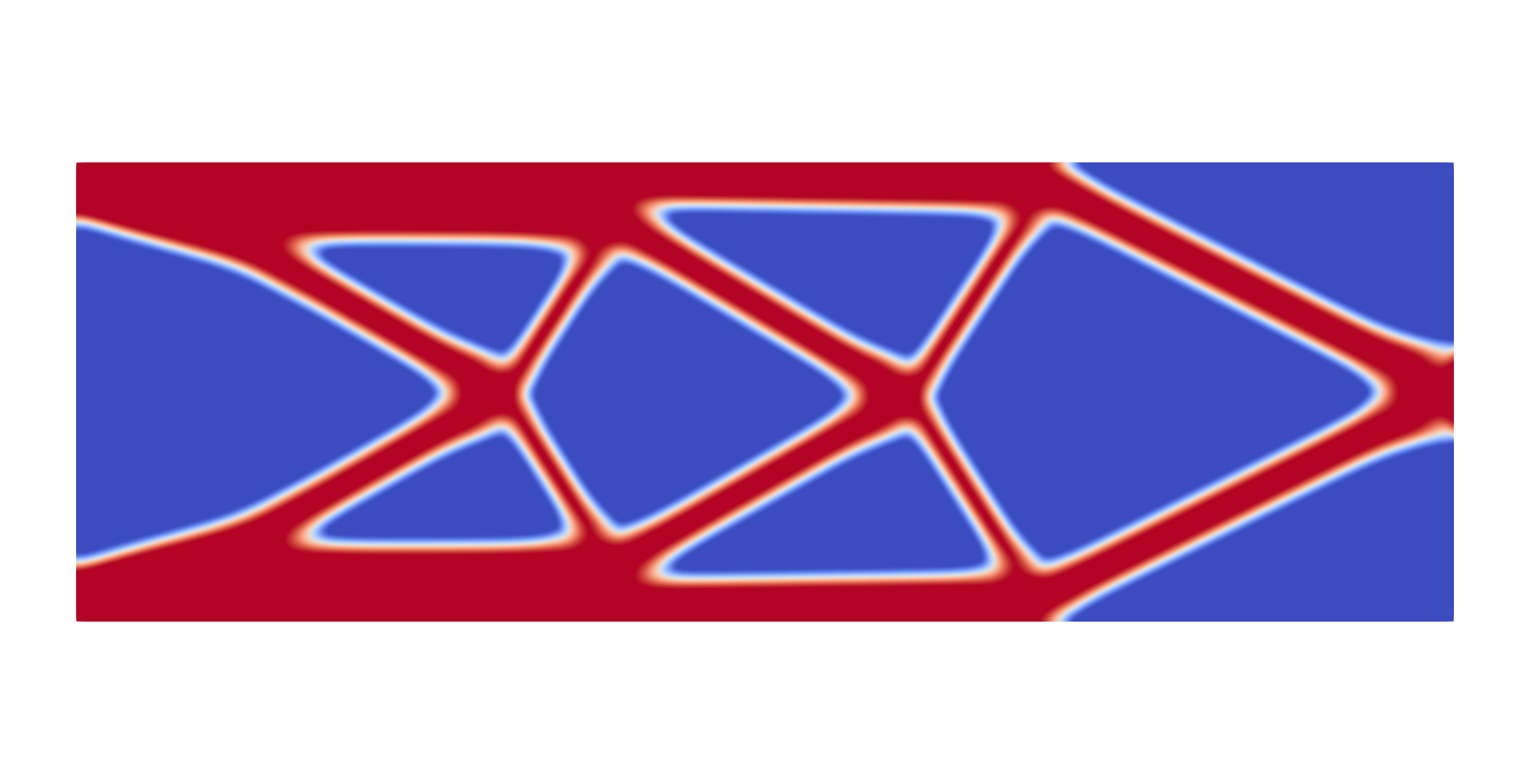}
    \caption{Structure without AM}
    \label{fig:3NoAM}
  \end{subfigure}
\centering
  \begin{subfigure}{0.32\textwidth}
    \centering \includegraphics[width=\linewidth,trim={0cm 6cm 0cm 5cm},clip]{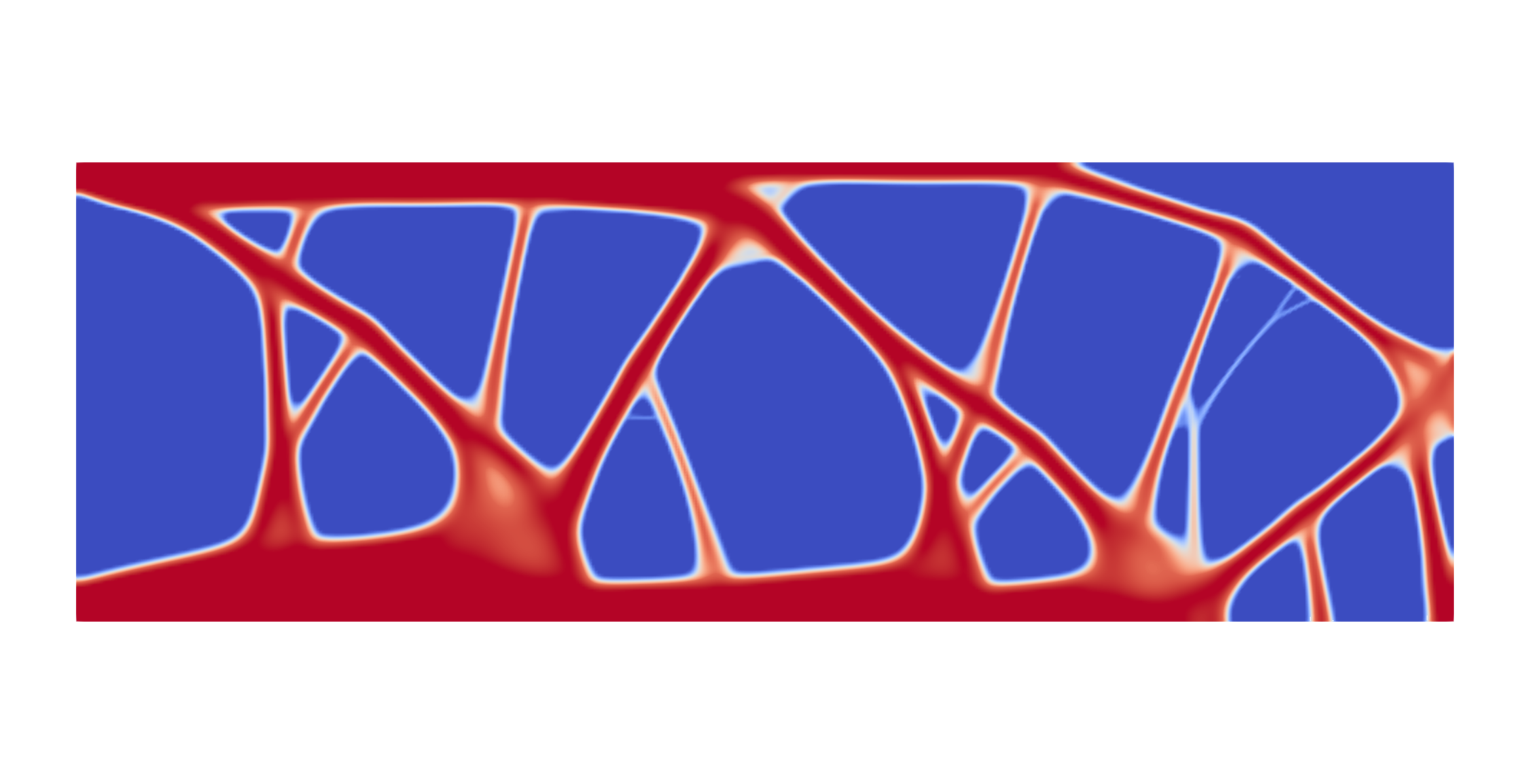}
    \caption{Structure using $W_1$}
    \label{fig:3Pcomp2}
  \end{subfigure}
\centering
  \begin{subfigure}{0.32\textwidth}
    \centering \includegraphics[width=\linewidth,trim={0cm 6cm 0cm 5cm},clip]{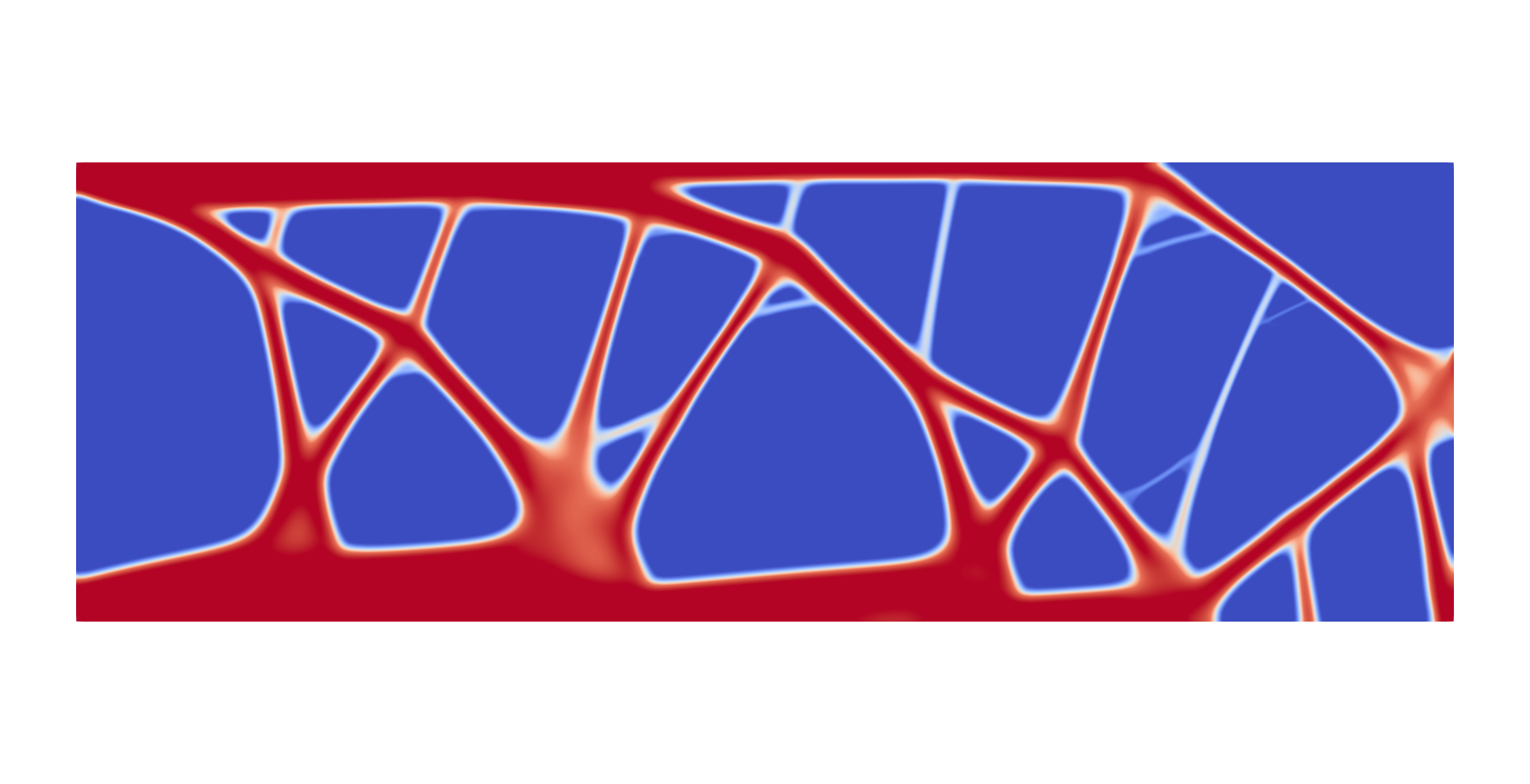}
    \caption{Structure using $W_2$}
    \label{fig:3Pcomp1}
  \end{subfigure}
\centering
\caption{Impact of $W$ on the structure when
$\Gamma_N= \{3\}\times[0.4,0.6]$}
\label{fig:WeighingImpactStructures}
\end{figure}
\end{center}

\subsubsection{Hardening of the structure}
In Section \ref{sec:Intro}, we introduced the possibility of a height dependent stress tensor $\mathcal{C}^c$ during construction, enabling the modelling of hardening effects.
To this end, we assume that in the given test example with 10 layers, the material is fully hardened within 3 layers with linear speed, i.e.
$\lambda^c_1=\mu^c_1$ take the values 32, 36, 40 and then 44 for lower layers.
The obtained local minima is depicted in Figure
\ref{fig:hardening10},
while in Figure \ref{fig:meanweighted10nh}
the result without hardening is given for comparison.
As expected, when hardening is included,
larger
 overhangs are allowed.
However, there are fewer
overhangs 
with hardening than using
the $W_3$ setting (see Figure \ref{fig:layerweighted10nh}), where gravity  acts
only on the current topmost layer.
\begin{center}
\begin{figure}[th!]
\centering
  \begin{subfigure}{0.32\textwidth}
    \centering \includegraphics[width=\linewidth,trim={0cm 6cm 0cm 5cm},clip]{Figures/Weighting/mean_weighted_10.png}
    \caption{Structure using $W_1$ \\
     without hardening}
    \label{fig:meanweighted10nh}
  \end{subfigure}
\centering
  \begin{subfigure}{0.32\textwidth}
    \centering \includegraphics[width=\linewidth,trim={0cm 6cm 0cm 5cm},clip]{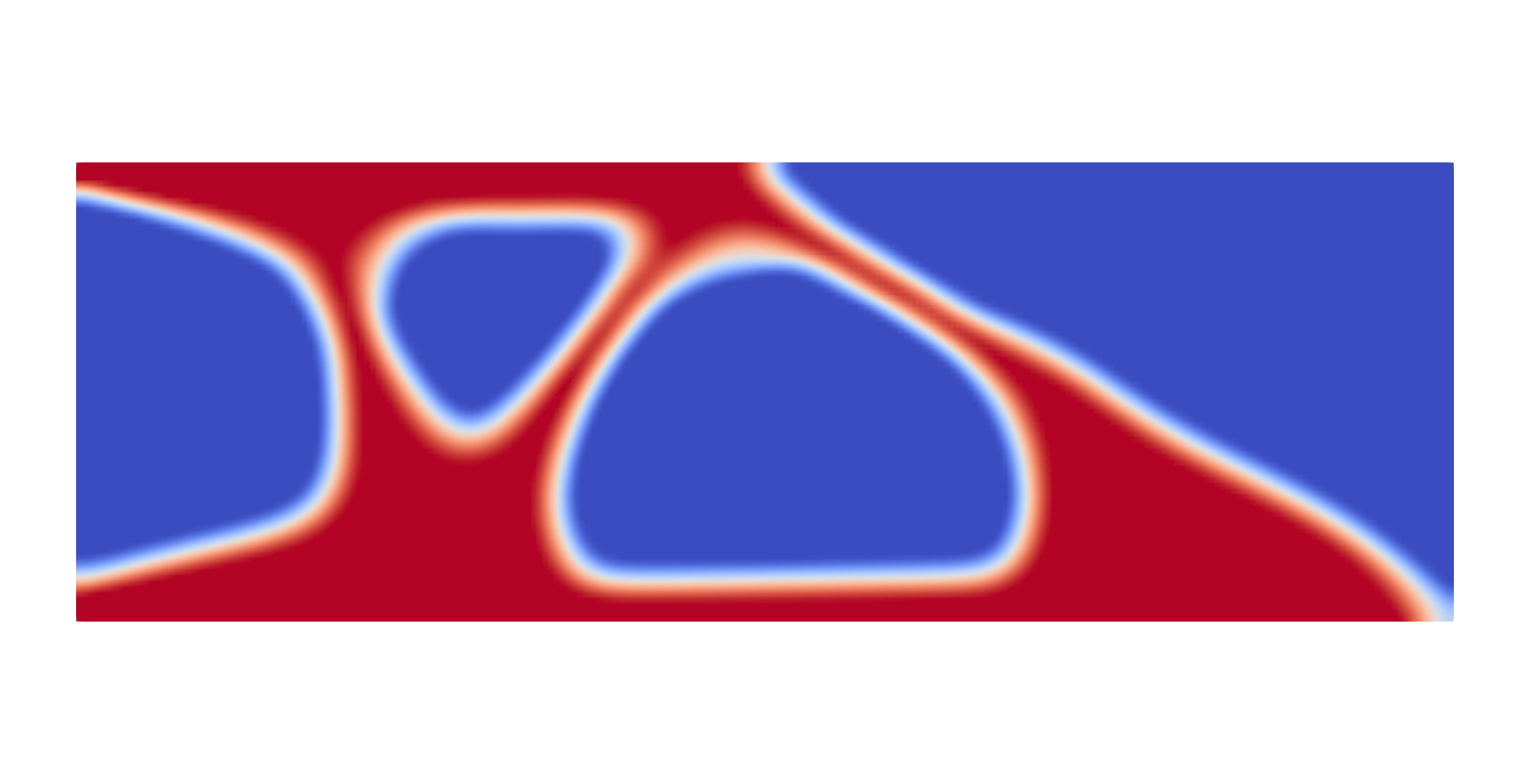}
    \caption{Structure using $W_1$ \\
     with hardening}
    \label{fig:hardening10}
  \end{subfigure}
\centering
  \begin{subfigure}{0.32\textwidth}
    \centering \includegraphics[width=\linewidth,trim={0cm 6cm 0cm 5cm},clip]{Figures/Weighting/layer_weighted_10.png}
    \caption{Structure using $W_3$ \\
     without hardening}
    \label{fig:layerweighted10nh}
  \end{subfigure}
\centering
\caption{Comparison of results with and without hardening effects}

\label{fig:HardeningImpactStructures}
\end{figure}
\end{center}

\subsubsection{Impact of different building plate locations on the final structure}
The location of the building plate and the corresponding building direction has a large impact on the final structure.
To illustrate this,
the solutions for our test problem, where the building plate is located on the bottom or on left hand side, 
are depicted in Figure \ref{fig:BuildingPlateLoc}.
The corresponding data are given in Table \ref{tab:BuildingPlateLoc}.
Although for large $M$ the solution hardly changes according to Subsection \ref{sec:StudyFc}, for small $M$ the thickness of the layers may play a role. Hence,
we fixed the thickness of the layers, resulting into $M=10$ layers for the location at the bottom and
into $M=30$ layers otherwise.
The resulting topologies differ drastically. 
In this example, locating the building plate on the left hand side yields a structure with a lower perimeter and lower $F$, though larger $W$ than locating the building plate on the bottom.
In particular, it resembles the topology of the structure obtained by neglecting the construction phase.
\begin{table}[th!]
\centering
\begin{tabular}{|c|c|c|c|c|}
\hline
\textbf{$\Gamma_B$ location} & \textbf{$j$} & \textbf{$\tilde{E}$} & \textbf{$F$} &  \textbf{$W$} \\
\hline
\hline
\textbf{bottom}&0.347160  & 13.238477 &0.148654 & 0.001377 \\
\hline
\textbf{left} & 0.339910 & 12.624342 & 0.136045 & 0.001617 \\
\hline
\end{tabular}
\caption{Impact of the building plate location}	
\label{tab:BuildingPlateLoc}
\end{table}
\begin{center}
\begin{figure}[th!]
  \begin{subfigure}{0.32\textwidth}
    \centering \includegraphics[width=\linewidth,trim={0cm 6cm 0cm 5cm},clip]{Figures/Overhang/overhang_0.png}
    \caption{Without consideration of AM effects
    }
    \label{fig:Beta1ZeroFinal}
  \end{subfigure}
\centering
  \begin{subfigure}{0.32\textwidth}
    \centering \includegraphics[width=\linewidth,trim={0cm 6cm 0cm 5cm},clip]{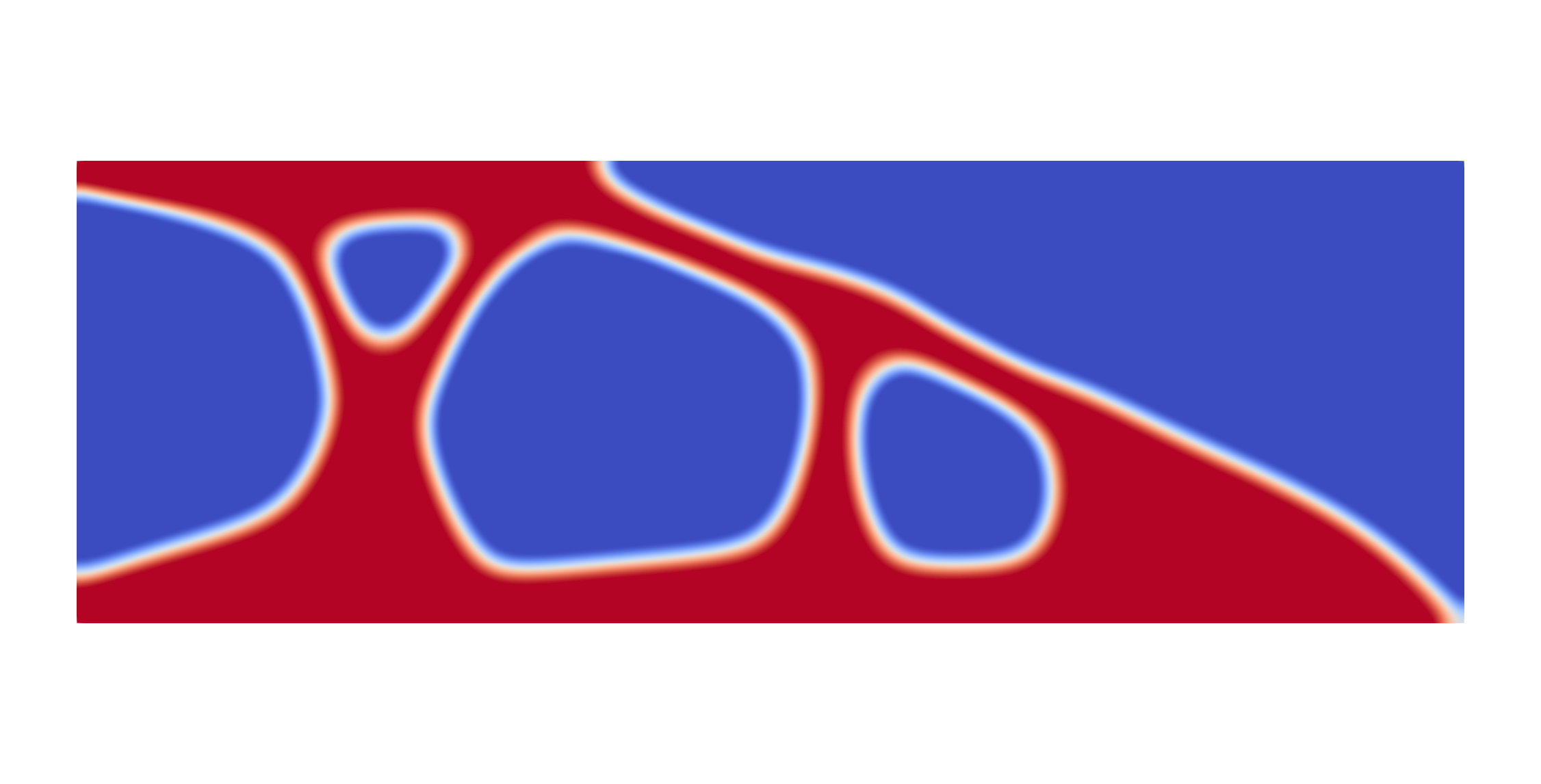}
    \caption{Building plate located on the bottom}
    \label{fig:BottomBuild}
  \end{subfigure}
\centering
  \begin{subfigure}{0.32\textwidth}
    \centering \includegraphics[width=\linewidth,trim={0cm 6cm 0cm 5cm},clip]{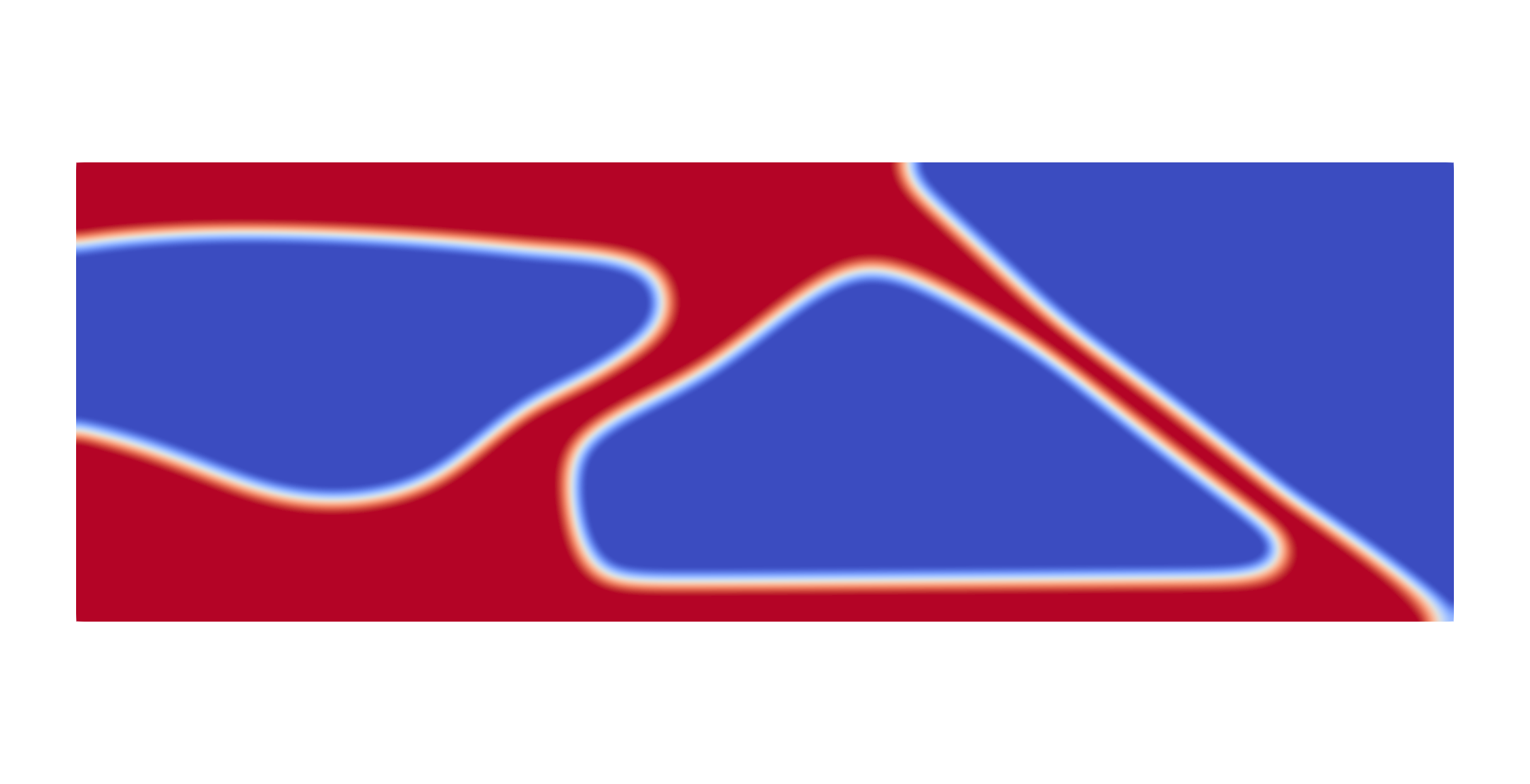}
    \caption{Building plate located on the left}
    \label{fig:LeftBuild}
  \end{subfigure}
\caption{Test example with different building plate locations}
\label{fig:BuildingPlateLoc}
\end{figure}
\end{center}

\subsection{Numerical examples with three phases and in 3D.}
\subsubsection{Example in 2D with three phases (multiphase setting)}\label{Test2D3phase}
In the following test example, we use the multiphase ansatz to model the distribution of a stiff and a weak material, as well as of void based on the cantilever setting given in the test example specification \ref{testEx}. 
However, the outer force $g$ is increased to
$g(\varphi,.) = - 1.5  e_d$, 
$\beta_1=120$ is employed if not mentioned otherwise, and
the maximal mesh width is set to $\delta_H=\varepsilon /4 $. 
While the Lamé parameters for the void and the stiff material remain unchanged,
the parameters for the softer material are chosen as $\lambda_2=\mu_2=32$ and $\lambda_2^c=\mu_2^c=25$.
The mass vector is set to $\mathfrak{m}=(0.2,0.2,0.6)$.

In Table \ref{tab:NestingComparisonMulti}, the iteration numbers of the nested VMPT approach are listed as a reference.
The behaviour is in accordance with the scalar case.
However,  let us mention that the quadratic subproblem is more elaborate given that instead of the constraint $\tilde \varphi(x) \in [-1,1]$ in the scalar case, one has $\varphi(x) \in G$ where $G$ is the Gibbs simplex.
Consequently, the CPU-time for one PDAS iteration is enlarged in the multiphase case.
This leads to an increase of the overall CPU-time even with a similar number of DOFs.
\begin{figure}[th!]
\centering
\begin{subfigure}[t]{0.48\textwidth}
\centering
\vspace{7pt}   % <-- important
\setlength{\tabcolsep}{4pt}
\resizebox{\linewidth}{!}{
\begin{tabular}{|r |r |r |r |r |r |r |}
\hline
\multicolumn{1}{|c|}{$M$}&\multicolumn{1}{|c|}{$\delta_H$} &  \multicolumn{1}{|c|}{DOFs}& \multicolumn{1}{|c|}{ $
{\mbox{\small VMPT}}
{\mbox{\small iterations}}
$ } \\
\hline
\hline
4      & ${1}/{40}$  & 44~649  
& 644     \\ 
\hline                                             
$8$  & ${1}/{80}$  & 253~773  
& 156 \\
\hline                                             
$10$& ${1}/{100}$& 456~015   
& 116  \\
\hline                                             
$10$& ${1}/{160}$&1~176~105 
& 32  \\
\hline
\hline
\multicolumn{3}{|c|}{\textbf{Total VMPT steps}}
&   948 \\
\hline
\multicolumn{3}{|c|}{\textbf{Mean PDAS steps}}
&   3.41 \\
\hline
\multicolumn{3}{|c|}{\textbf{CPU time}}
&   1h 13min \\
\hline
\hline
\multicolumn{3}{|c|}{\textbf{Final $|j(\varphi_{k+1}) - j(\varphi_k)|$}}
&   6.35E-7 \\
\hline
\end{tabular}}
\vspace{0pt}   % <-- important
\caption{Results for the nested VMPT approach in  the multiphase setting \ref{Test2D3phase}%
}	
\label{tab:NestingComparisonMulti}
\end{subfigure}
\hfill
\begin{subfigure}[t]{0.4\textwidth}
\centering
\vspace{0pt}   % <-- important
\includegraphics[width=\linewidth]{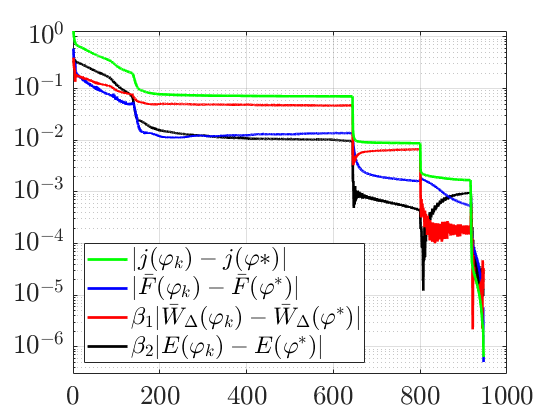}
\caption{Error reductions for the \\
 nested VMPT approach in the\\ 
  multiphase setting \ref{Test2D3phase}}
\label{fig:conv-multi}
\end{subfigure}

\caption{Comparison of nested VMPT results and convergence history.}
\end{figure}

We now focus on the distribution
of the soft and stiff materials
in dependence on $\beta_1$.
The results are depicted in Figure \ref{fig:Beta1ParameterStudyMulti}, where the stiff material corresponds to the red phase and the soft material corresponds to the green phase. The corresponding data is given in Table \ref{tab:Beta1ParameterStudyMultiphase}.
In all observed cases, stiff material is located near $\Gamma_D$. 
When no additive manufacturing is considered, the stiffer material further gathers where the outer force acts, i.e., in a neighborhood of $\Gamma_N$.
If we set at least $\beta_1=10$, this part consists of the soft material, which forms regions with large diameter while the stiff material is primarily used
to form rigid filigree structures.
\begin{table}[th!]
\centering
\begin{tabular}{|c|c|c|c|c|c|c|c|}
\hline
  \textbf{$\beta_1$} & 0 & 5 & 10 & 45 & 90 &  360 \\
\hline
\hline
\textbf{$E$} & 6.717309 & 6.464207 & 6.481637 & 6.481637 & 7.174341 &  8.133135 \\
\hline
\textbf{$F$} & 0.268670 & 0.280294 & 0.281345 & 0.281345 & 0.295840 &  0.369773 \\
\hline
\textbf{$W$} & 0.010774 & 0.003966 & 0.003068 & 0.003068 & 0.001950 &  0.001209 \\
\hline
\end{tabular}
\caption{Parameter study for $\beta_1$
in the multiphase setting \ref{Test2D3phase}}
\label{tab:Beta1ParameterStudyMultiphase}
\end{table}

\begin{center}
\begin{figure}[th!]
\centering
  \begin{subfigure}{0.32\textwidth}
    \centering \includegraphics[width=\linewidth,trim={0cm 6cm 0cm 5cm},clip]{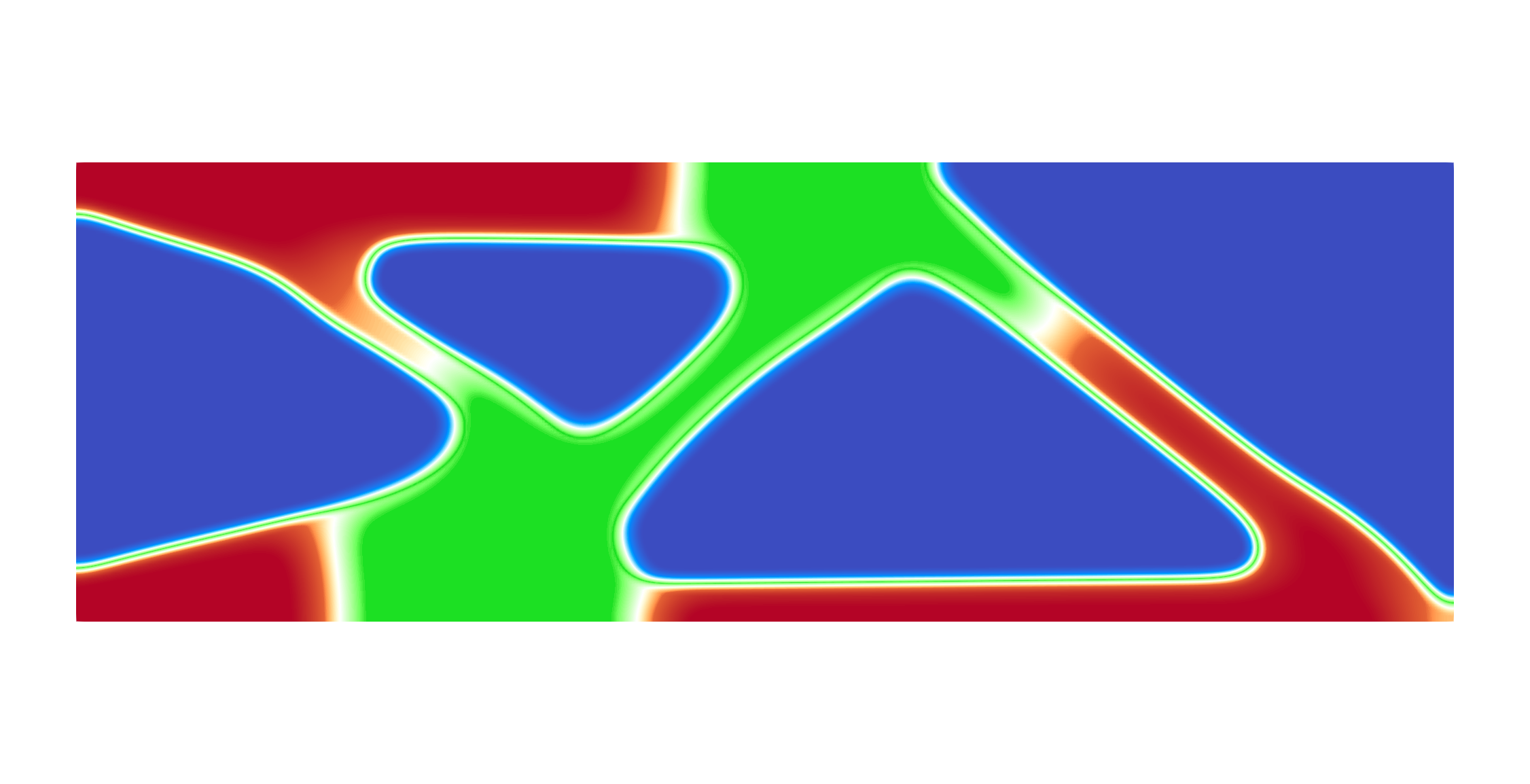}
    \caption{$\beta_1=0$}
    \label{fig:Beta1Multi0}
  \end{subfigure}
  \begin{subfigure}{0.32\textwidth}
    \centering \includegraphics[width=\linewidth,trim={0cm 6cm 0cm 5cm},clip]{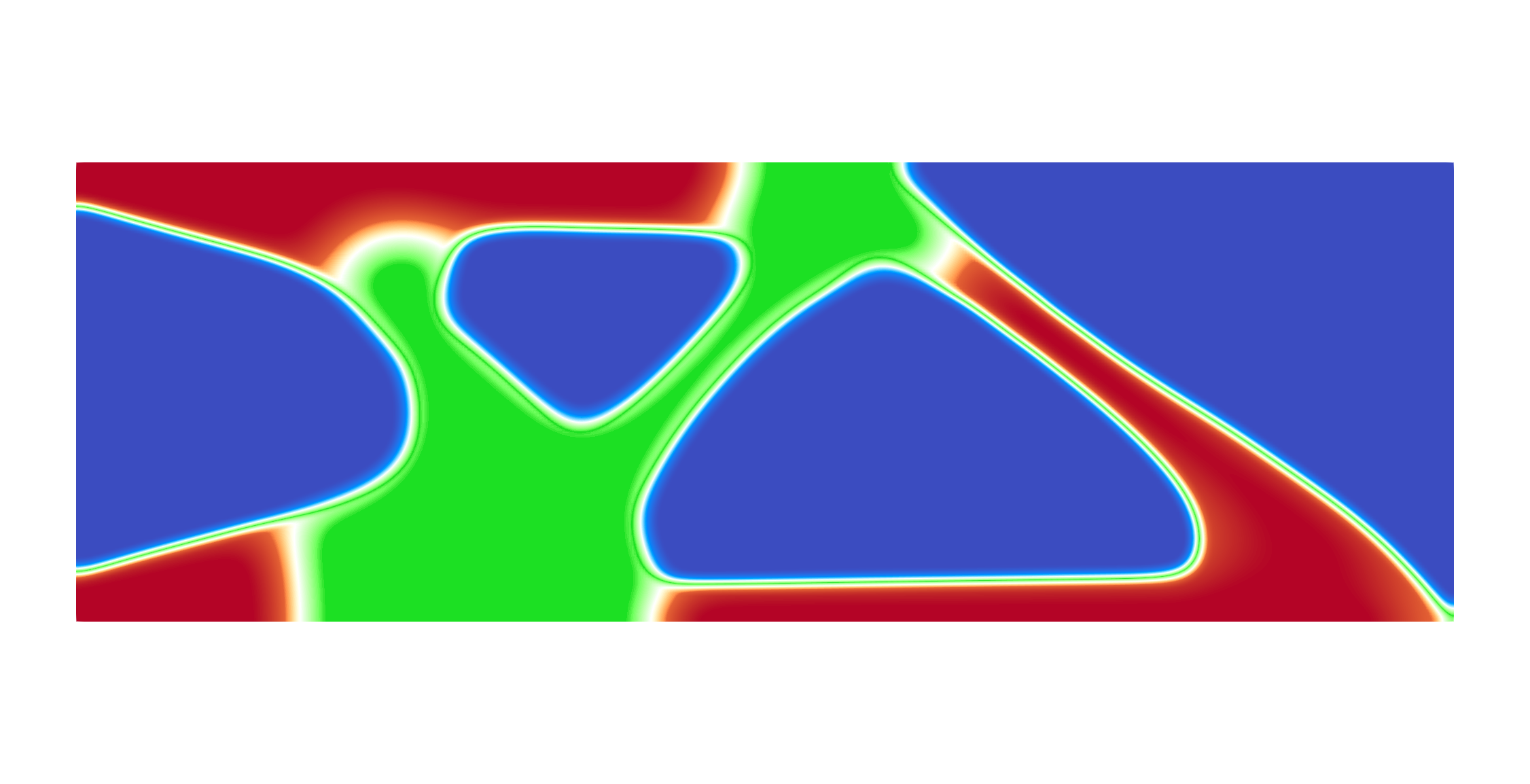}
    \caption{$\beta_1=5$}
    \label{fig:Beta1Multi5}
  \end{subfigure}
  \begin{subfigure}{0.32\textwidth}
    \centering \includegraphics[width=\linewidth,trim={0cm 6cm 0cm 5cm},clip]{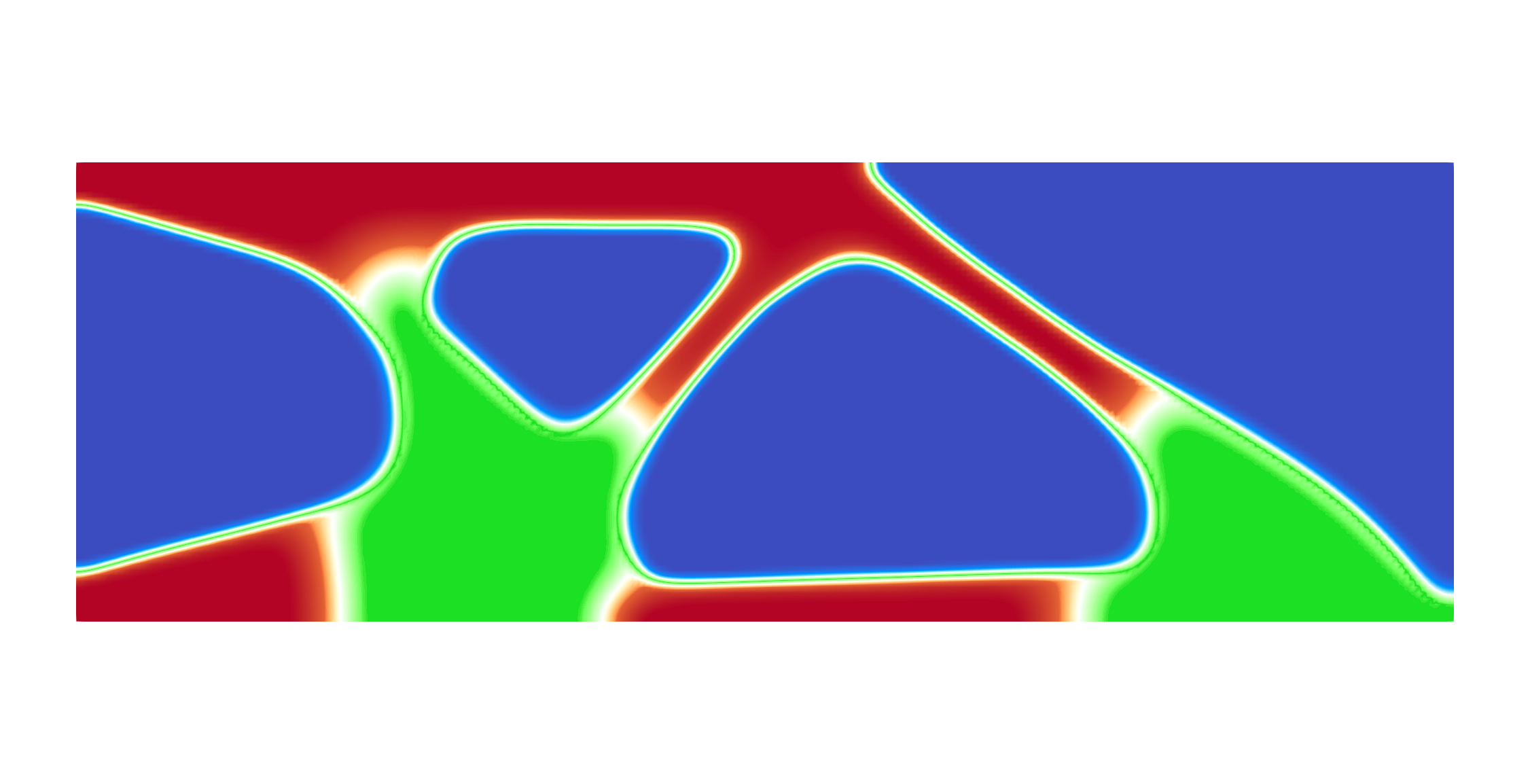}
    \caption{$\beta_1=10$}
    \label{fig:Beta1Multi10}
  \end{subfigure}
  
\centering
  \begin{subfigure}{0.32\textwidth}
    \centering \includegraphics[width=\linewidth,trim={0cm 6cm 0cm 5cm},clip]{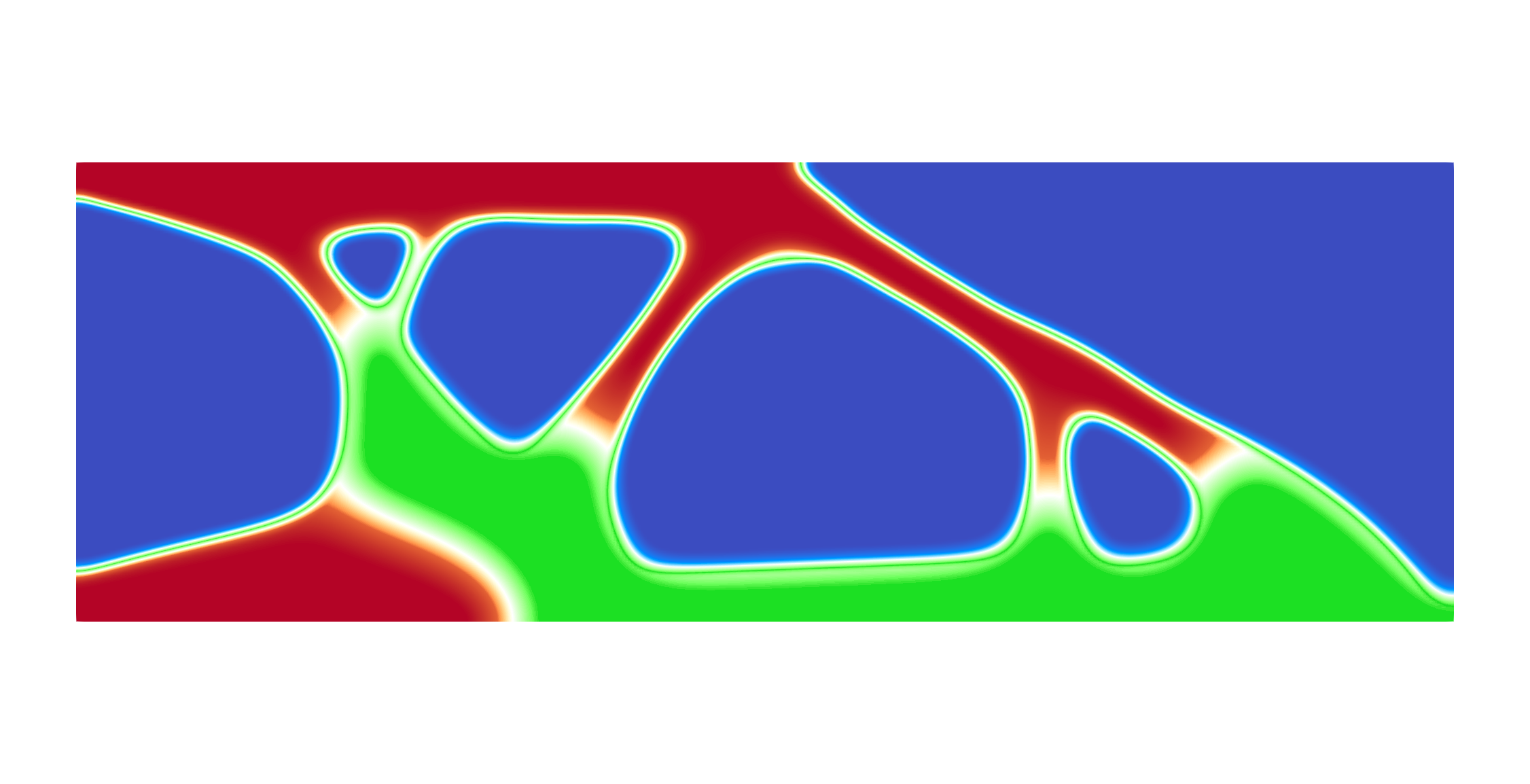}
    \caption{$\beta_1=45$}
    \label{fig:Beta1Multi45}
  \end{subfigure}
  \begin{subfigure}{0.32\textwidth}
    \centering \includegraphics[width=\linewidth,trim={0cm 6cm 0cm 5cm},clip]{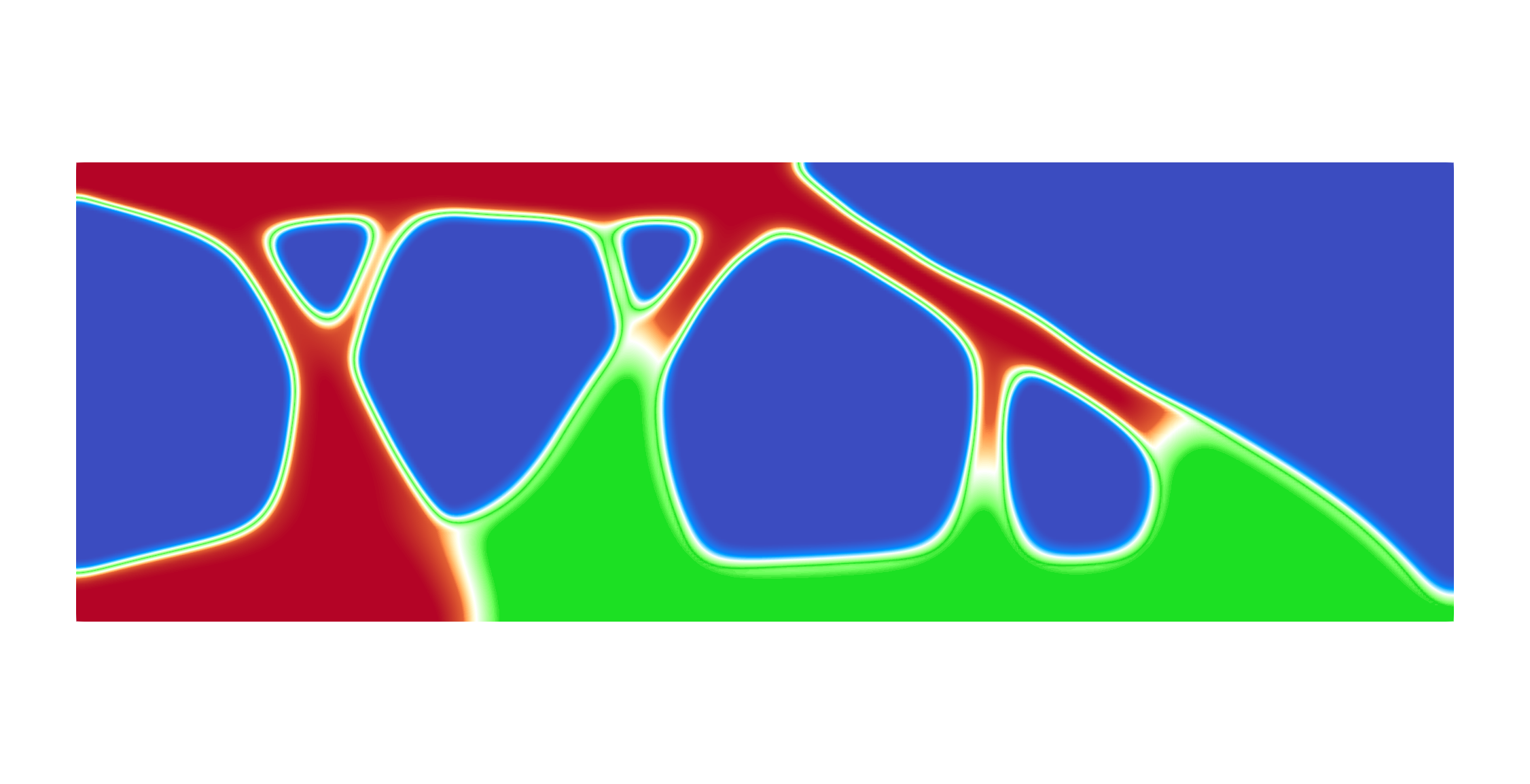}
    \caption{$\beta_1=90$}
    \label{fig:Beta1Multi90}
  \end{subfigure}
  \begin{subfigure}{0.32\textwidth}
    \centering \includegraphics[width=\linewidth,trim={0cm 6cm 0cm 5cm},clip]{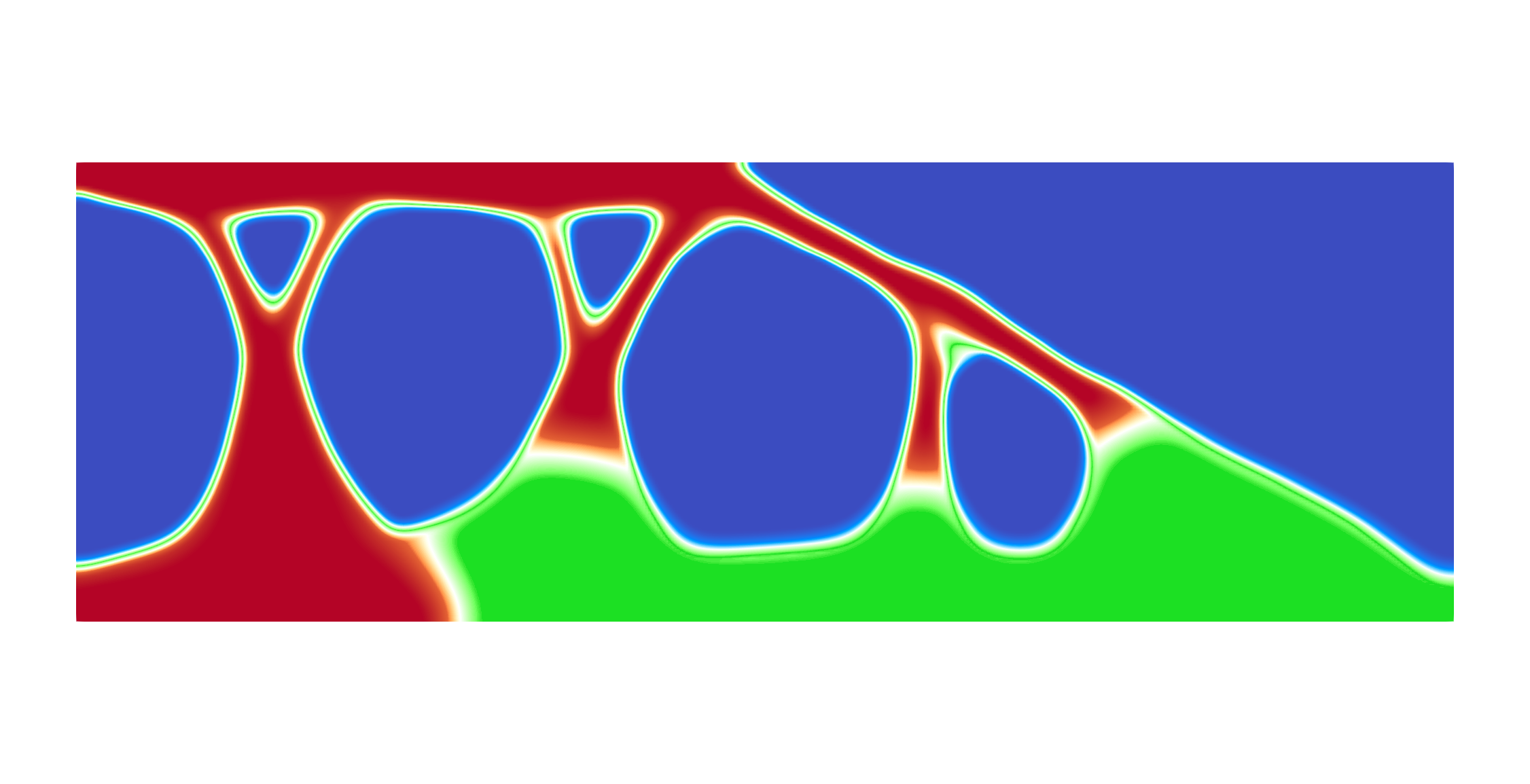}
    \caption{$\beta_1=360$}
    \label{fig:Beta1Multi360}
  \end{subfigure}

\caption{Multiphase test example \ref{Test2D3phase}
 with increasing $\beta_1$}
\label{fig:Beta1ParameterStudyMulti}
\end{figure}
\end{center}

\subsubsection{Allowing for removable support structures}
Often removable supports are employed in the construction phase, e.g. in \cite{AllaireBogoselSupports2}.
They provide stability during construction, but have to be taken off afterwards.
The multiphase approach allows the joint optimization of the structure and of the removable supports by including an extra phase variable for the removable support. 
As a test example, we consider a MBB beam setting, which features multiple
overhangs
when no AM effects are considered, which are reflected in a high self-weight $W$.
The design domain is $\Omega=[0,5]\times [0,1]$ and the structure is fixed at $\Gamma_D=[0,0.1]\times\{0\}
 \cup[4.9,5]\times\{0\}$,
an outer force $g=-e_d$ acts at
$\Gamma_N=[2.25,2.75]\times\{1\}$.
The gravitational force is again $f^c(\varphi,.) = -(1-\varphi_N) e_d$.
During construction, the Lamé parameters of the material and of the support coincide and are set to $32$.
When construction is completed, the Lamé parameters of the support and of the void match.
Moreover, we choose $\beta_1=20$ and $\beta_2=0.0002$.

In Table \ref{tab:SupportResults} and in Figure \ref{fig:Supports} the results for the structure without AM and with AM but without removable supports are listed, where the mass of the material amounts to $30\% $.
They are opposed in this table and figure to the structures obtained with removable supports, where first we allow no further but $5\% $ of the material to be used for the removable structures (i.e. $\mathfrak{m} = [0.25, 0.05, 0.7]$),
while in the second test additional support material is added with the amount of $8\% $ (i.e. $\mathfrak{m} = [0.3, 0.08, 0.62]$).

\begin{table}[th!]
\centering
\begin{tabular}{|c|c|c|c|c|c|c|c|}
\hline
\textbf{Method} & $\mathfrak{m}$ & $E$ & $F$ & $W$ \\
\hline
\hline
\textbf{No AM considered} & $[0.3,0.7]$ & 28.241766 & 0.155738 & 0.039798 \\
\hline
\textbf{No additional supports} & $[0.3,0.7]$ & 36.1056322 & 0.175231 & 0.002056 \\
\hline 
\textbf{Additional supports I} & $[0.25, 0.05 ,0.7]$ & 60.223846 & 0.192696 & 0.001887 \\
\hline
\textbf{Additional supports II} & $[0.3,0.08, 0.62]$ & 46.050218 & 0.161178 & 0.001961 \\
\hline
\end{tabular}
\caption{Results for the MBB beam test example with and without removable support}	
\label{tab:SupportResults}
\end{table}

\begin{center}
\begin{figure}[th!]
\centering
  \begin{subfigure}{0.48\textwidth}
    \centering \includegraphics[width=\linewidth,trim={0cm 12cm 0cm 12cm},clip]{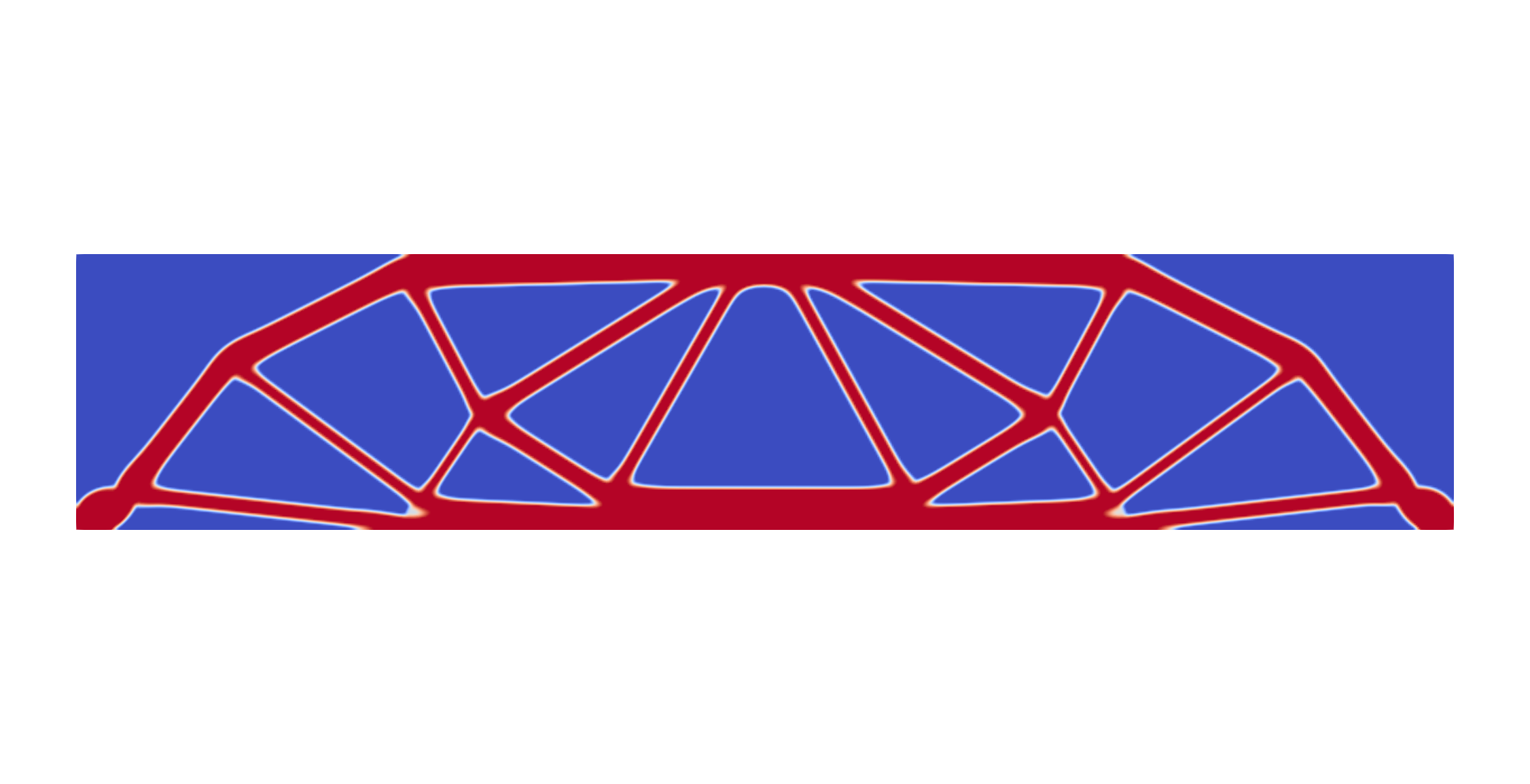}
    \caption{No AM}
    \label{fig:NoAM}
  \end{subfigure} 
 \centering
  \begin{subfigure}{0.48\textwidth}
    \centering \includegraphics[width=\linewidth,trim={0cm 12cm 0cm 12cm},clip]{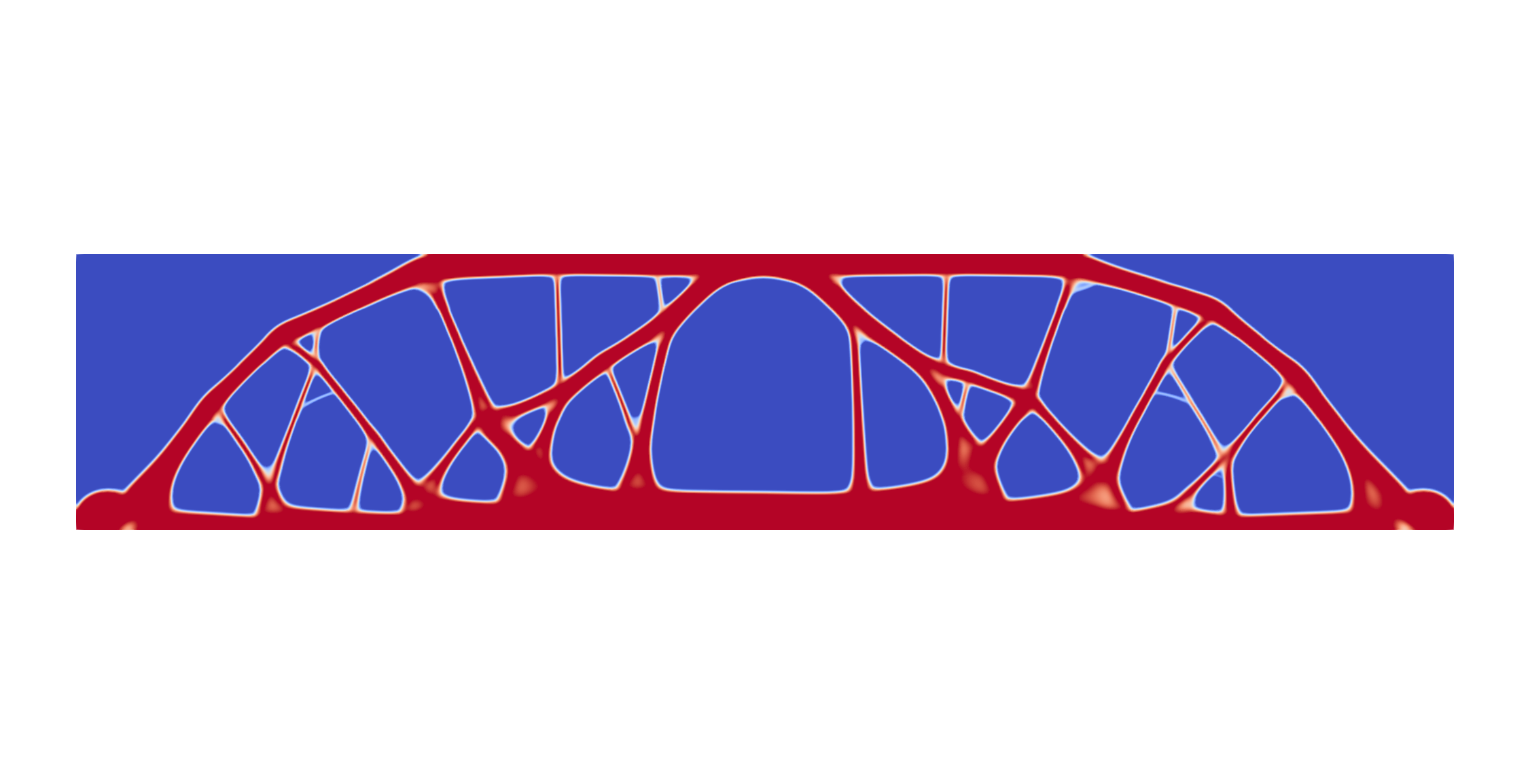}
    \caption{With AM}
    \label{fig:ScalarAM}
  \end{subfigure}
  
\centering
  \begin{subfigure}{0.48\textwidth}
    \centering \includegraphics[width=\linewidth,trim={0cm 12cm 0cm 12cm},clip]{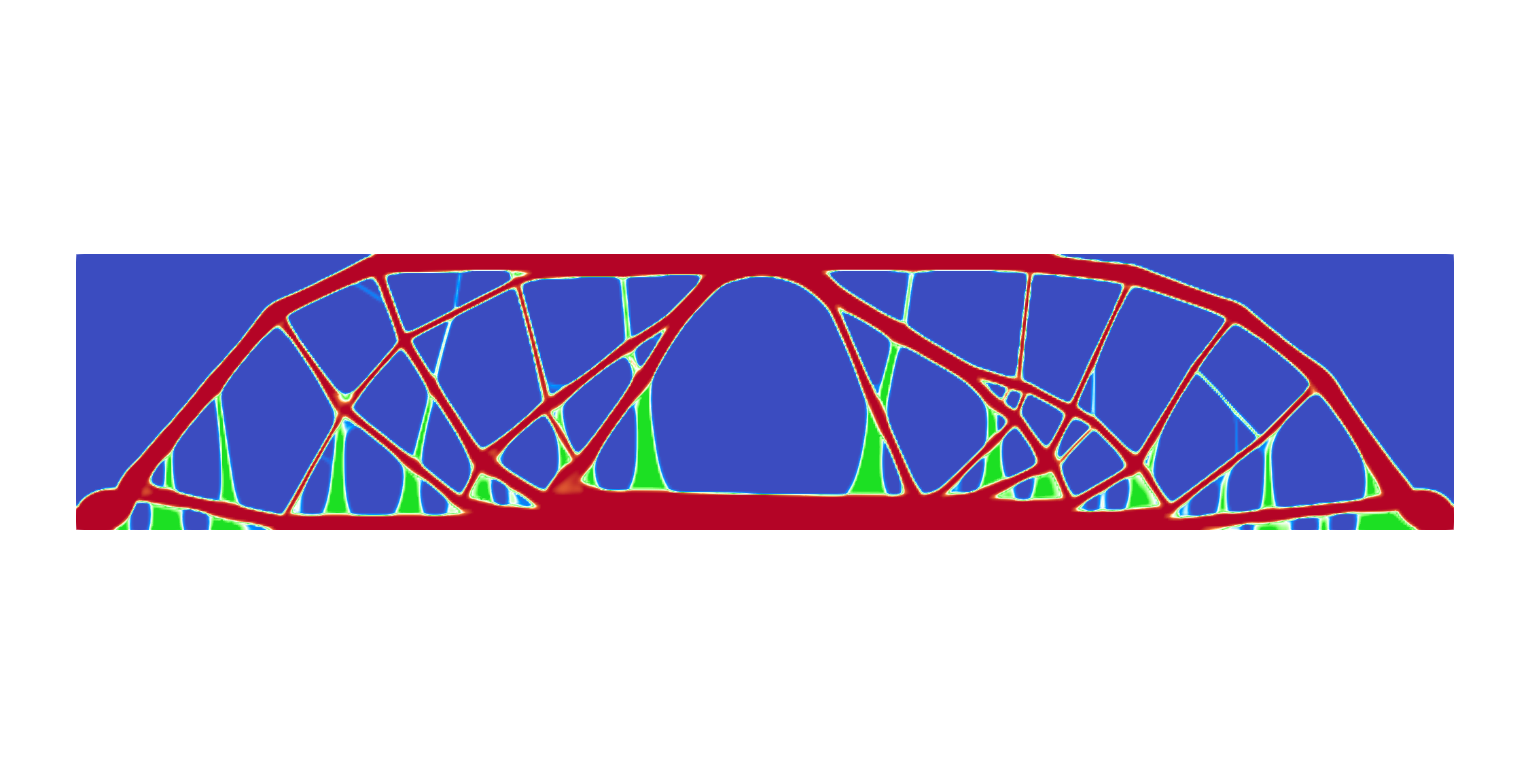}
    \caption{Removable supports included with mass constraint $\mathfrak{m}=[0.25,0.05,0.7]$}
    \label{fig:SupportsLowMass}
  \end{subfigure}  
\centering
  \begin{subfigure}{0.48\textwidth}
    \centering \includegraphics[width=\linewidth,trim={0cm 12cm 0cm 12cm},clip]{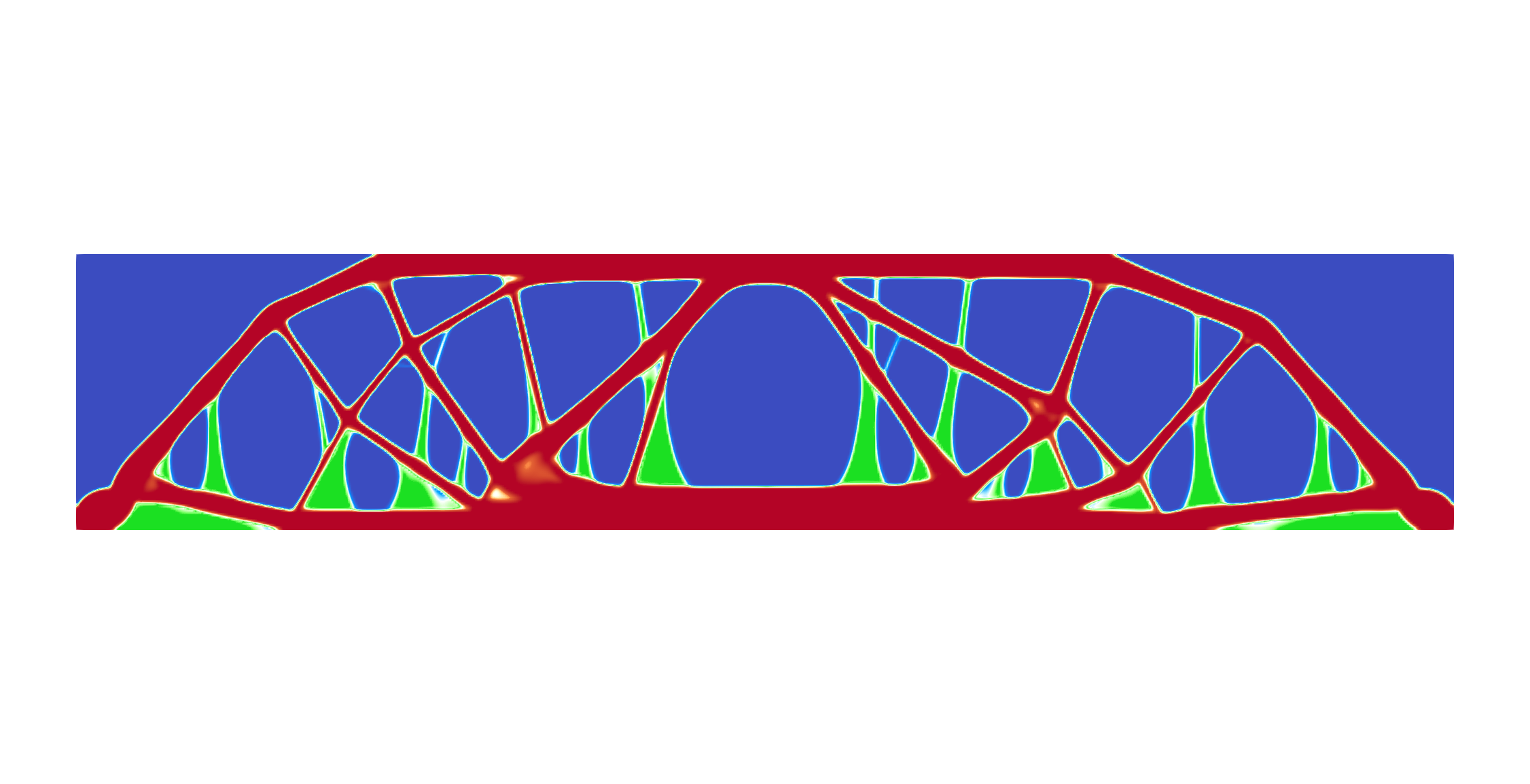}
    \caption{Removable supports included with mass constraint
    $\mathfrak{m}=[0.3,0.08,0.62]$}
    \label{fig:SupportsHighMass}
  \end{subfigure}

\caption{MBB beam test example with and without removable support structures}
\label{fig:Supports}
\end{figure}
\end{center}
In Figure \ref{fig:Supports}, the red phase corresponds to the main material, while green and blue represent the support phase and void, respectively.
The structure without AM consideration achieves the lowest value of $F$ among all examples, but shows poor constructability,
which is reflected in high $W$.
Including AM in the optimization problem,
$W$ improves by roughly $95\%$, while $F$ only increases moderately by $13\%$.
Visually, multiple vertical bars
are incorporated into the structure.
Using removable support structures with the same overall material mass shows similar low $W$, but degrades performance in $F$ by around $24\%$ compared to the result without AM.
The reason is most likely the decreased amount of remaining material.
Allowing additional $8\%$ support material almost restores the performance in $F$, while assuring constructability with the lowest $W$ of all examples.
However, then around $21\%$ of the material has to be disposed, additional labour is required, and the risk of damaging the structure while removing the supports remains.

\subsubsection{Example in 3D}
\label{3Dexample}
Finally, we present the impact of $W$ on a cantilever test example in three dimensions with two phases.
To this end, we set
$\Omega=[0,2]\times[0,1]\times[0,1]$, 
$\Gamma_N = [1.75,2]\times[0,1]\times\{0\}$,
$\Gamma_D=\{0\}\times[0,1]\times[0,1]$
and the mass is fixed to
$\tilde {\mathfrak{m}}=-0.6$.
Moreover, we increase the interface thickness to $\varepsilon=0.05$.
The remaining setting stays as in the 2D test example 
\ref{testEx}. Around 15h of CPU time for approximately 700 nested VMPT iterations were necessary per example, which is due to the $1~056~321$ employed spatial nodes
leading to an optimization problem with
$13~732~173$ DOFs.
\begin{center}
\begin{figure}[th!]
\centering
  \begin{subfigure}{0.24\textwidth}
    \centering \includegraphics[width=\linewidth,clip]{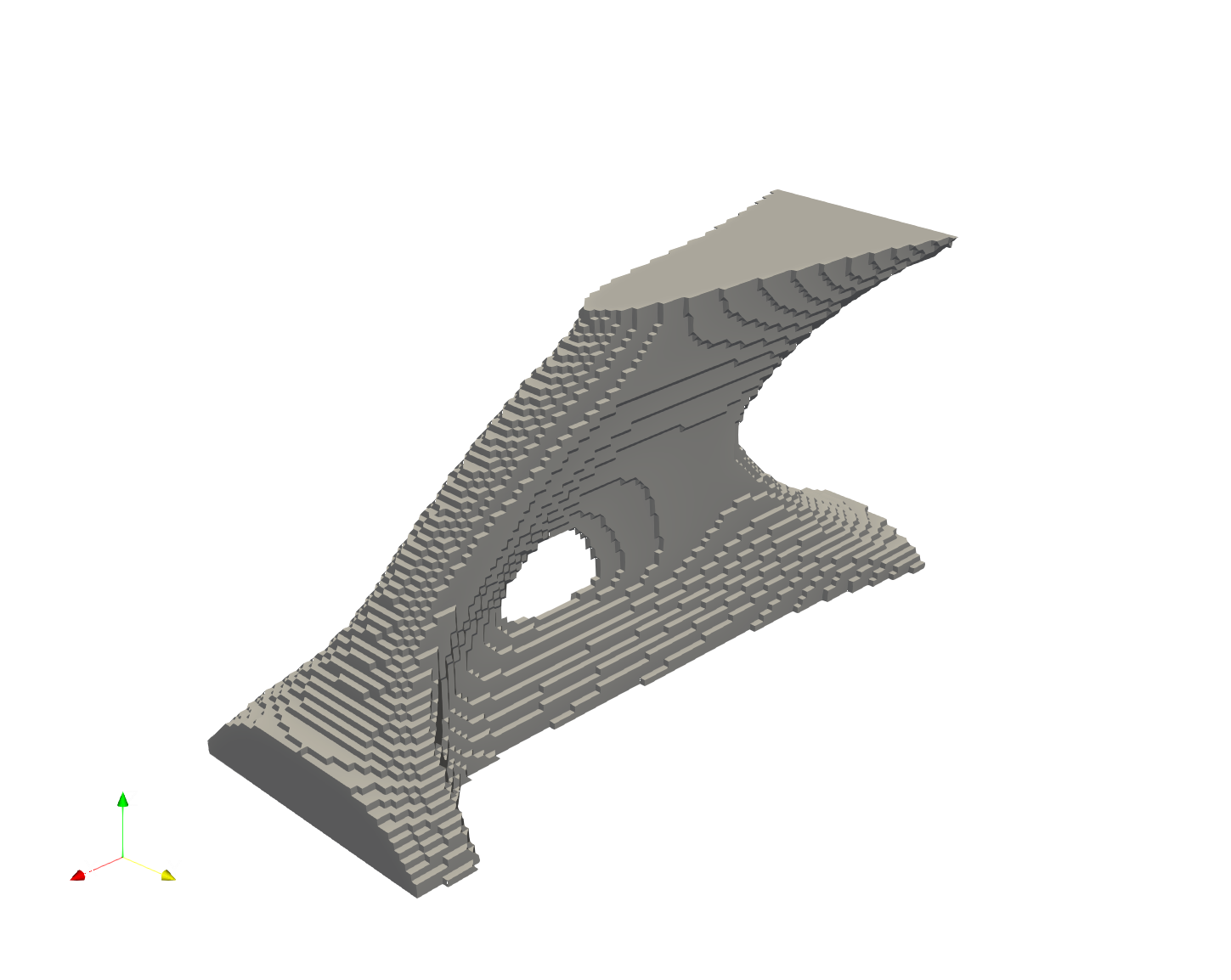}
    \caption{$\beta_1=0$, diagonal view}
    \label{fig:Diag0}
  \end{subfigure}
\centering
  \begin{subfigure}{0.24\textwidth}
    \centering \includegraphics[width=\linewidth,clip]{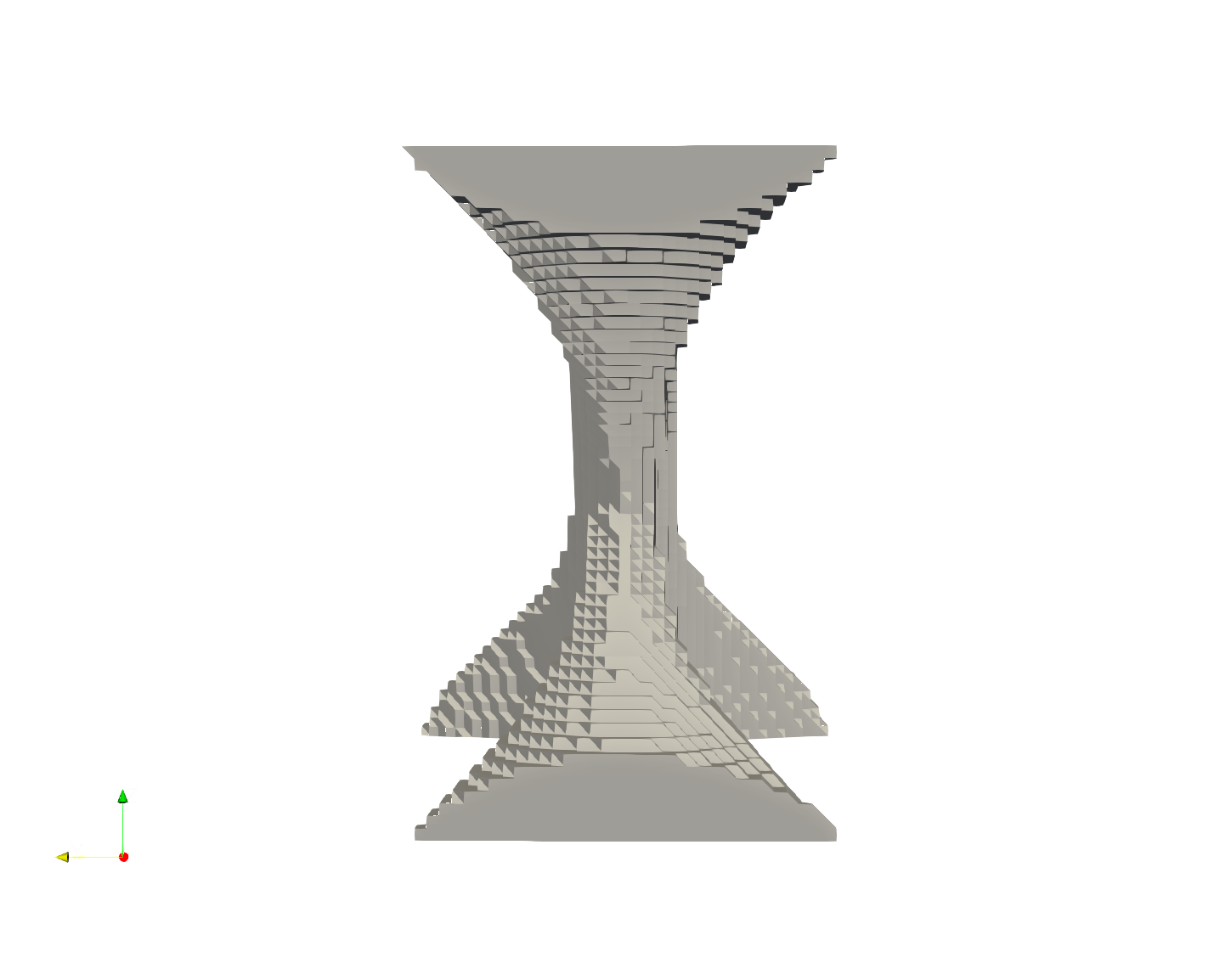}
    \caption{$\beta_1=0$, view from the back}
    \label{fig:Back0}
  \end{subfigure}
\centering
  \begin{subfigure}{0.24\textwidth}
    \centering \includegraphics[width=\linewidth,clip]{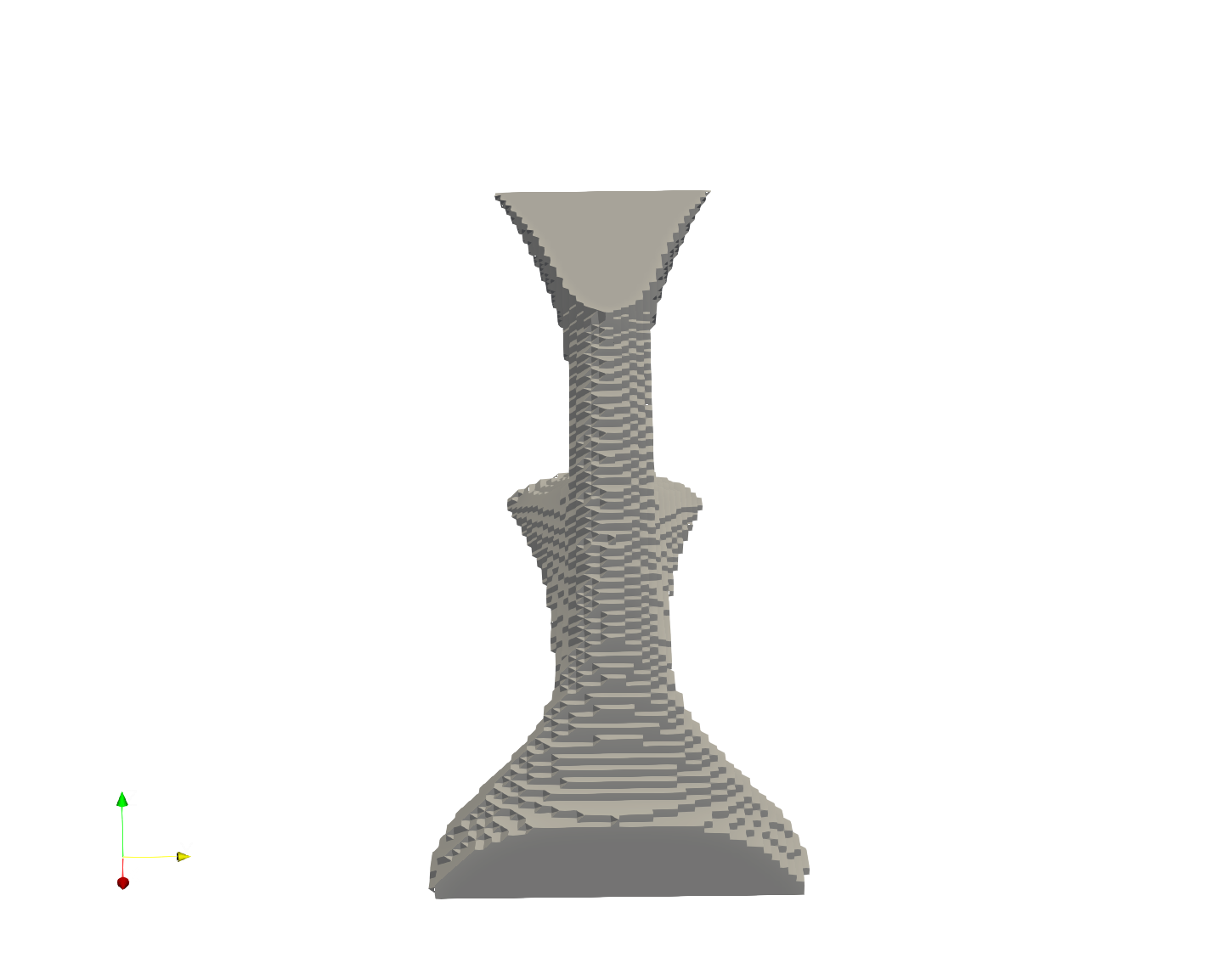}
    \caption{$\beta_1=0$, view from the front}
    \label{fig:Front0}
  \end{subfigure}
\centering
  \begin{subfigure}{0.24\textwidth}
    \centering \includegraphics[width=\linewidth,clip]{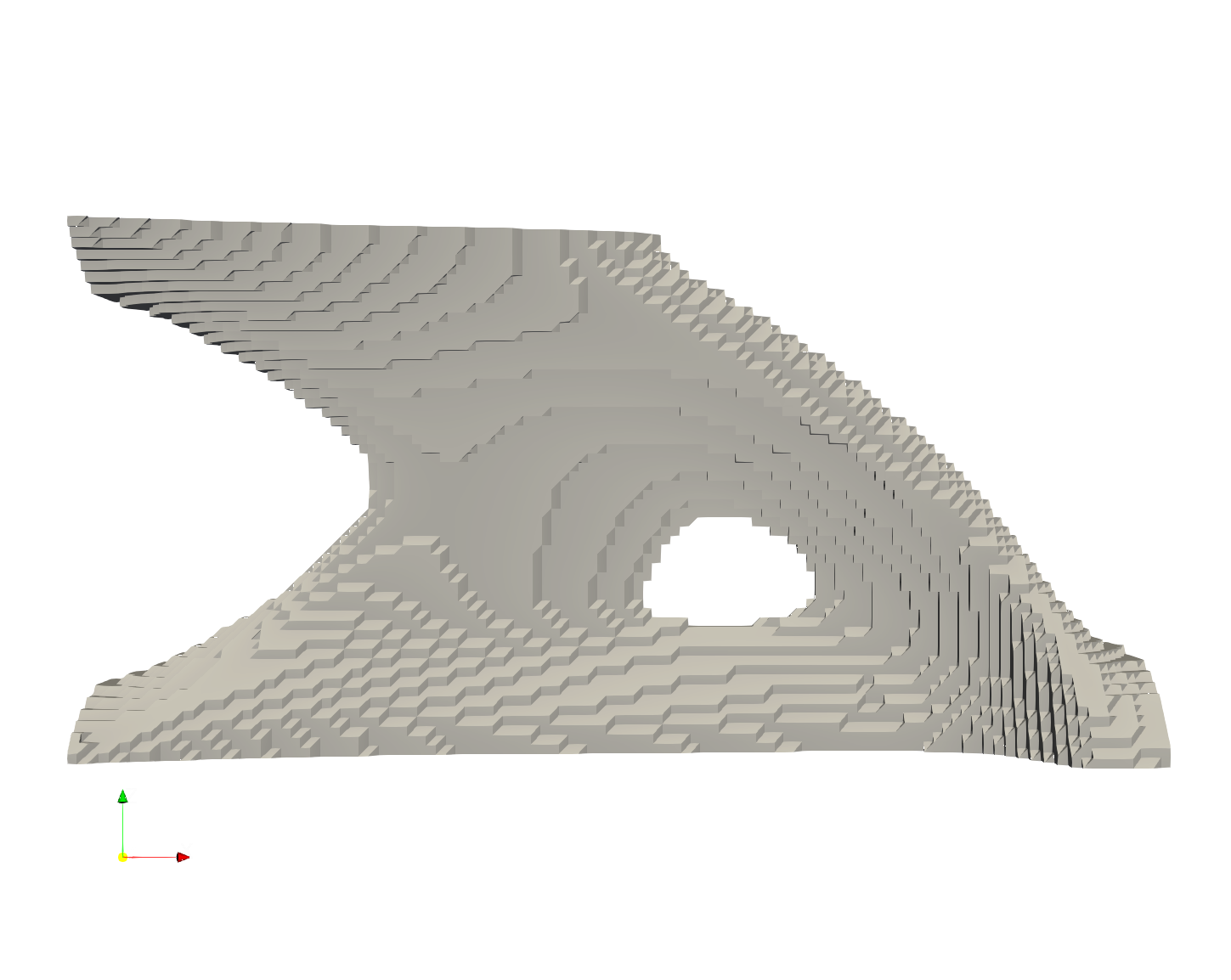}
    \caption{$\beta_1=0$, view from the side}
    \label{fig:Side0}
  \end{subfigure}

\centering
  \begin{subfigure}{0.24\textwidth}
    \centering \includegraphics[width=\linewidth,clip]{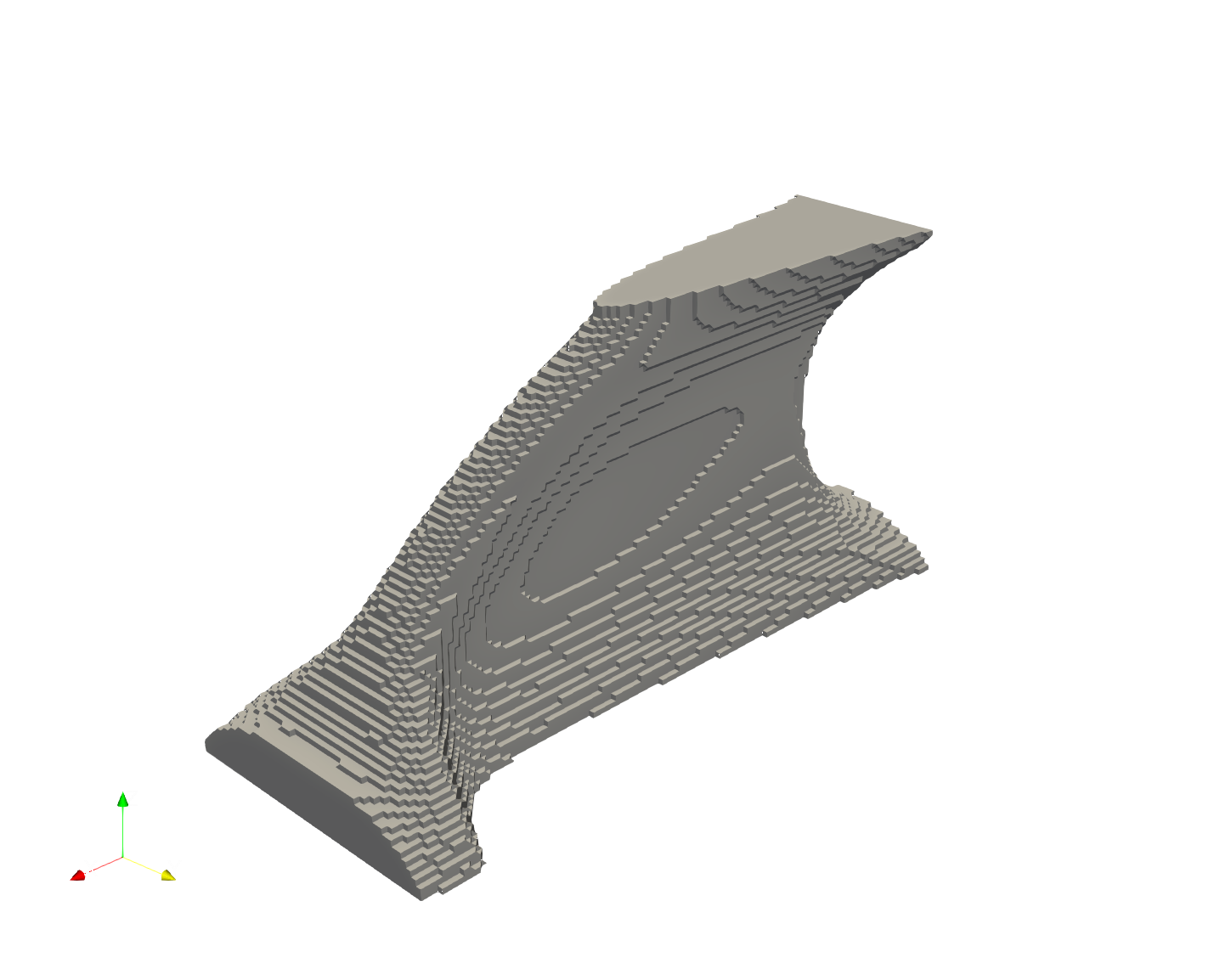}
    \caption{$\beta_1=48$, diagonal view}
    \label{fig:Diag48}
  \end{subfigure}
\centering
  \begin{subfigure}{0.24\textwidth}
    \centering \includegraphics[width=\linewidth,clip]{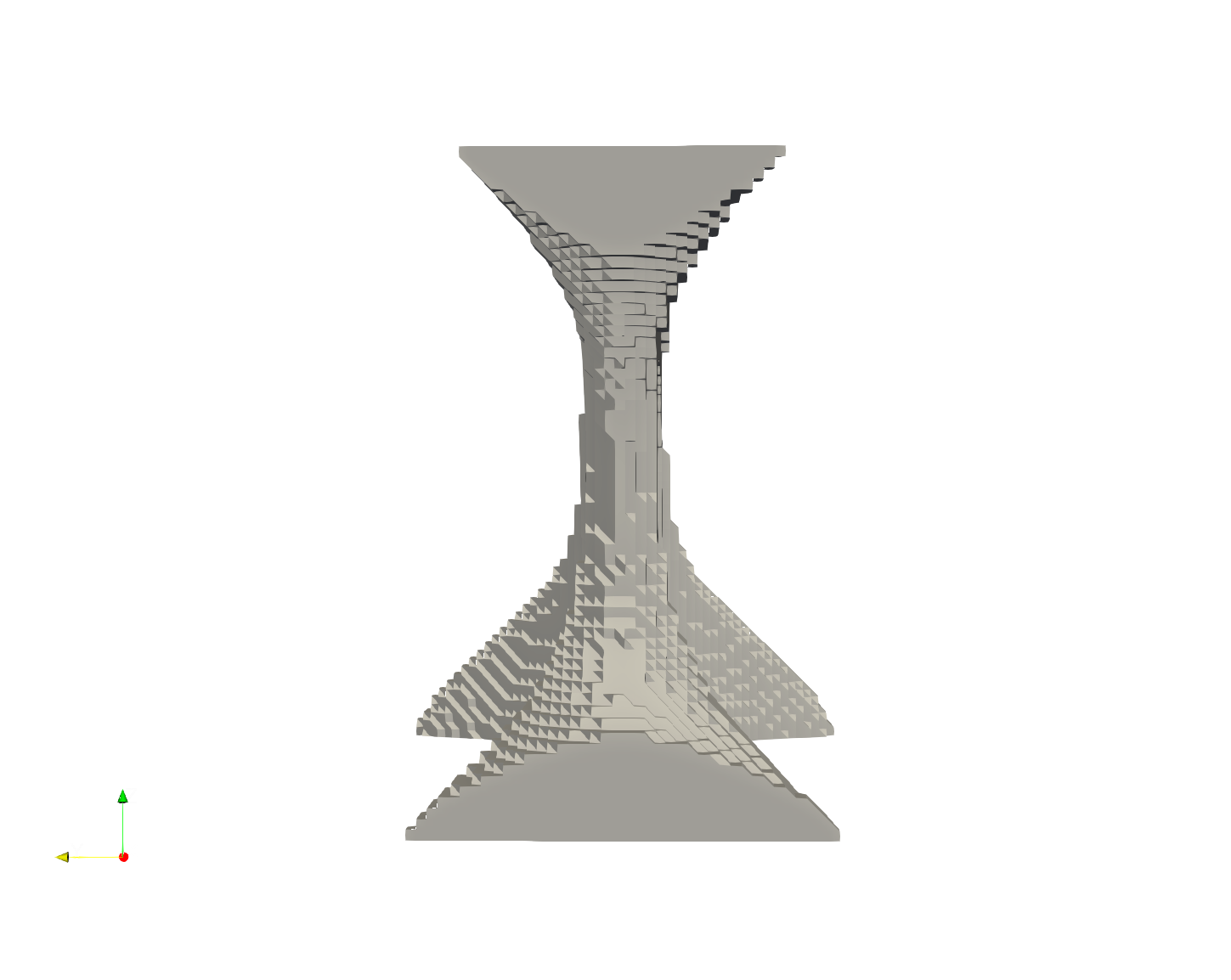}
    \caption{$\beta_1=48$, view from the back}
    \label{fig:Back48}
  \end{subfigure}
\centering
  \begin{subfigure}{0.24\textwidth}
    \centering \includegraphics[width=\linewidth,clip]{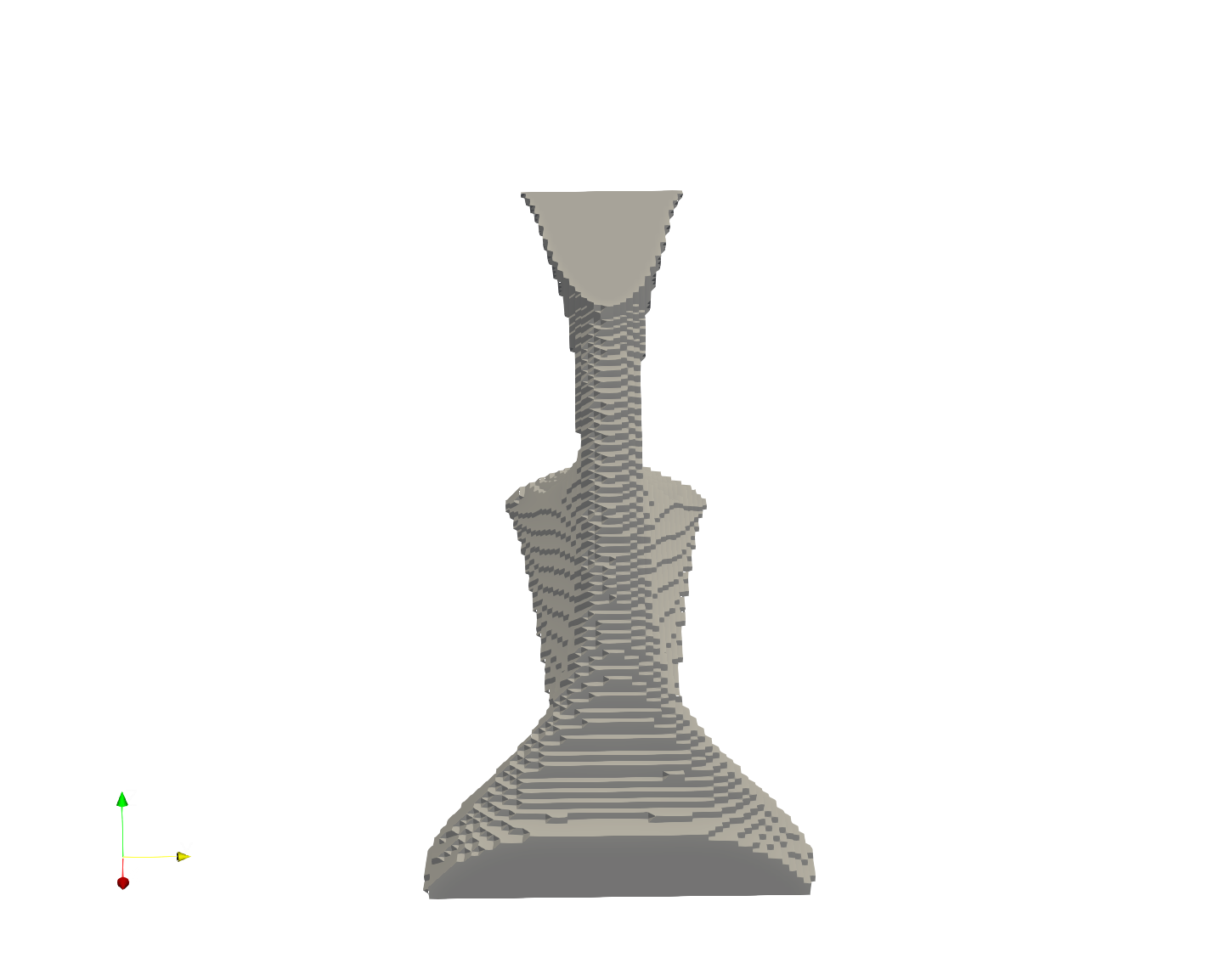}
    \caption{$\beta_1=48$, view from the front}
    \label{fig:Front48}
  \end{subfigure}
\centering
  \begin{subfigure}{0.24\textwidth}
    \centering \includegraphics[width=\linewidth,clip]{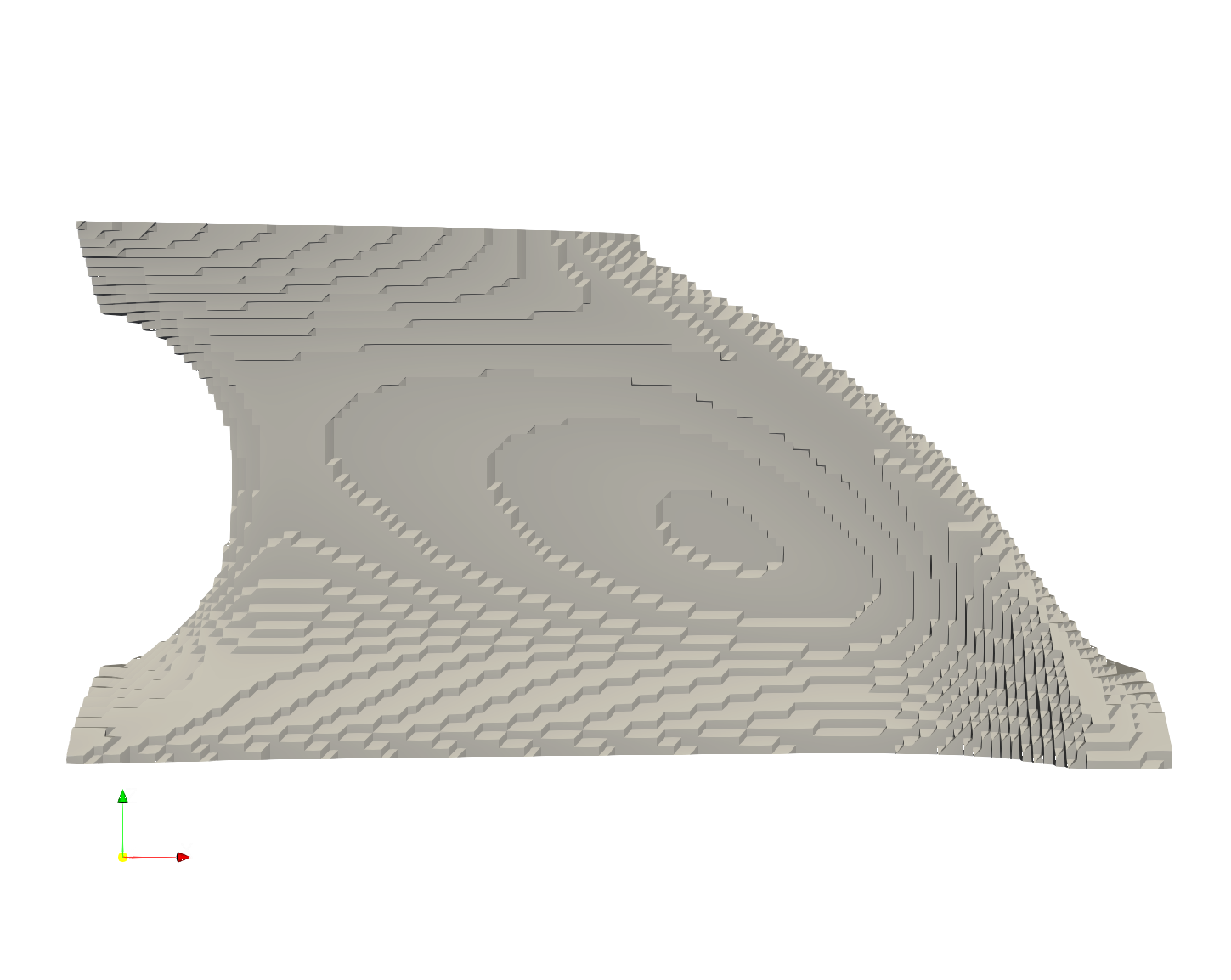}
    \caption{$\beta_1=48$, view from the side}
    \label{fig:Side48}
  \end{subfigure}
\caption{Zero level sets of the 3D cantilever examples \ref{3Dexample} with varying $\beta_1$}
\label{fig:3DSolutions}
\end{figure}
\end{center}
In Figure \ref{fig:3DSolutions}, the obtained structures without additive manufacturing
and with AM using $\beta_1=48$ are depicted. 
In the case of $\beta_1=0$, large
overhangs
in all directions emerge near $\Gamma_D$,
which are visible from the side
in Figure \ref{fig:Side0} and
from the back in Figure \ref{fig:Back0}.
These overhangs
and also the central hole vanish after increasing $\beta_1$.  
As already observed for the 2D example
\ref{testEx},
mass is relocated towards the building plate, which is evident in Figure \ref{fig:Front0} in comparison to Figure \ref{fig:Front48}.
Let us finally mention that the structures above feature no internal cavities.

\section{Appendix}
\label{sec:Appendix}
For $\Gamma_B \subseteq \partial \Omega$ we define
the space
  $H^1_B(\Omega):= \{ y\in H^1(\Omega)\ |\ y|_{\Gamma_B} = 0\ a.e.
\}$.

\begin{lemma}[Poincar\'e inequality]  \label{PoinIE}
Given an open 
    $\Omega_B \subseteq \R^{d-1}$,
$ \Gamma_B=\Omega_B\times \{ 0 \}$ and
$\Omega_h=\Omega_B\times (0,h)$  for $0<h$. 
Then the  Poincar\'e inequality holds with the constant $h$, i.e. 
\begin{eqnarray} \label{eq:PoinIE}
 || y||_{L^2(\Omega_h)}
  \leq h
  || \nabla y||_{L^2(\Omega_h)}
\quad \forall y \in H^1_B(\Omega_h) \; .
\end{eqnarray}
\end{lemma}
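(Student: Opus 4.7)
The plan is to use the fundamental theorem of calculus in the $x_d$-direction together with Cauchy-Schwarz, exploiting the Dirichlet condition on $\Gamma_B = \Omega_B\times\{0\}$. Since the product structure $\Omega_h = \Omega_B\times(0,h)$ lets each vertical line segment $\{x'\}\times(0,h)$ sit entirely inside $\Omega_h$ and touch $\Gamma_B$ at its lower endpoint, we can reconstruct $y$ from its $x_d$-derivative along these segments. This is exactly the setting where the constant in the Poincaré inequality reduces to the length of the segment, namely $h$.

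First I would reduce to smooth functions: by the usual density argument, it suffices to prove \eqref{eq:PoinIE} for $y \in C^\infty(\overline{\Omega_h})$ with $y|_{\Gamma_B}=0$, and then pass to the limit in $H^1_B(\Omega_h)$. For such $y$, and for almost every $x' \in \Omega_B$, the trace $y(x',0)=0$, so we may write
\[
y(x',x_d) = \int_0^{x_d} \partial_d y(x',s)\,ds \qquad \text{for } 0<x_d<h.
\]
Applying Cauchy-Schwarz in the $s$-variable,
\[
|y(x',x_d)|^2 \le x_d \int_0^{x_d} |\partial_d y(x',s)|^2\,ds \le h \int_0^{h} |\partial_d y(x',s)|^2\,ds.
\]

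Next I would integrate this pointwise bound over $\Omega_h$ using Fubini: integrating first in $x_d \in (0,h)$ produces another factor of $h$, and then integrating in $x' \in \Omega_B$ reassembles the full $L^2$ norm of $\partial_d y$ on $\Omega_h$:
\[
\int_{\Omega_h} |y|^2\,dx \;=\; \int_{\Omega_B}\!\int_0^h |y(x',x_d)|^2\,dx_d\,dx' \;\le\; h^2 \int_{\Omega_B}\!\int_0^h |\partial_d y(x',s)|^2\,ds\,dx' \;=\; h^2 \|\partial_d y\|_{L^2(\Omega_h)}^2.
\]
Since $|\partial_d y|\le |\nabla y|$ pointwise, this gives $\|y\|_{L^2(\Omega_h)}\le h\|\nabla y\|_{L^2(\Omega_h)}$, which is \eqref{eq:PoinIE}. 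Finally, I would extend the inequality to all $y\in H^1_B(\Omega_h)$ by approximation, using that $C^\infty$ functions vanishing on $\Gamma_B$ are dense in $H^1_B(\Omega_h)$ and that both sides of the inequality are continuous with respect to the $H^1$ norm.

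I do not expect a serious obstacle here; the only mildly delicate point is the density/trace step, which is standard for the product domain $\Omega_B\times(0,h)$. The key feature worth emphasising in the write-up is that the constant equals $h$ \emph{exactly} (not just proportional to $h$), which is what Theorem \ref{kornIE} and the coercivity argument in the main text rely on to get bounds uniform in the construction height.
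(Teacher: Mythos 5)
Your proof is correct and follows essentially the same route as the paper: the fundamental theorem of calculus in the $x_d$-direction using $y=0$ on $\Gamma_B$, Cauchy--Schwarz, and integration over $\Omega_h$ to obtain the constant $h$ exactly. The explicit density argument you add is a standard technicality that the paper leaves implicit.
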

\begin{proof}
The proof of the inequality is the same as for stripes, see e.g. in \cite{Alt2016}.\\
The inequality follows by using the fundamental theorem of calculus with $y=0$ on $\Gamma_B$ and the Cauchy-Schwarz-inequality: 
\begin{eqnarray*}
  | y(\hat x, x_d) |^2 &=&
  |\int_0^{x_d}   \partial_t y(\hat x, t) \text{ d}t|^2
  \\
  &\leq &x_d \int_0^{x_d}   | \partial_t y(\hat x, t) |^2 \text{ d}t
  \leq h \int_0^h   | \partial_t y(\hat x, t) |^2 \text{ d}t\\
  &\leq& h \int_0^h  | \nabla y(\hat x, t) |^2 \text{ d}t
  \quad \mbox{ for allmost all } (\hat x, x_d) \in \Omega_h \; .
  \\
  \Rightarrow
  || y||_{L^2(\Omega_h)}^2&\leq&
                                 h \int_{\Omega_B}  \int_0^h   \int_0^h  | \nabla y(\hat x, t) |^2 \text{ d}t   \text{ d}x_d \text{ d}\hat x \\
&=&h^2 || \nabla y ||_{L^2(\Omega_h)}^2 \; .
\end{eqnarray*}
\end{proof}

\begin{theorem}[Korn's inequality]
  \label{kornIE}
  Given 
$\Omega_B=(0,l_1)\times \ldots \times (0,l_{d-1})$ with $l_i>0$, $ \Gamma_B=\Omega_B\times \{ 0 \}$, $\Omega_h=\Omega_B\times (0,h)$  for $0<h\leq H$ and $\Omega:=\Omega_H$, then the Korn inequality holds with a constant independent of $h$, i.e.
\begin{eqnarray}
  \int_{\Omega_h} |\nabla u(x)|^2 \, \dx
     \leq
  K_{\Omega,\Gamma_B } 2^ {d-1}\int_{\Omega_{h}} \lvert \mathcal{E}(u(x)) \rvert^2 \, \dx
\quad \forall u\in H^1_B(\Omega_h)^d .
  \label{kornNew}
\end{eqnarray}
\end{theorem}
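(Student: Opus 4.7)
The plan is to reduce the inequality on $\Omega_h$ to the assumed Korn inequality on $\Omega$ by means of an extension-by-reflection argument. The case $h = H$ is trivial, since then $\Omega_h = \Omega$ and the claim reduces to Korn on $\Omega$ (whose constant $K_{\Omega,\Gamma_B}$ is bounded above by $K_{\Omega,\Gamma_B}\,2^{d-1}$). So assume $h < H$.

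Given $u \in H^1_B(\Omega_h)^d$, I would extend it to a test function $v \in H^1_B(\Omega)^d$ by even reflection across the hyperplane $\{x_d = h\}$, followed by zero extension beyond $x_d = 2h$ when $2h \le H$:
\begin{align*}
v(x',x_d) := \begin{cases}
u(x',x_d), & 0 < x_d < h,\\
u(x',2h - x_d), & h < x_d < \min(2h,H),\\
0, & 2h \le x_d < H \text{ (if } 2h \le H).
\end{cases}
\end{align*}
The essential feature is that the boundary condition $u|_{\Gamma_B} = 0$ forces the trace of the reflected piece at $x_d = 2h$ to equal $u(x',0) = 0$, so the zero extension preserves $H^1$-regularity. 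The Dirichlet condition on $\Gamma_B$ is inherited directly from $u$, so $v \in H^1_B(\Omega)^d$. When $h > H/2$ the reflection is simply truncated at $x_d = H$; since $H^1_B$ imposes no trace condition on the top face, this truncated $v$ still belongs to $H^1_B(\Omega)^d$.

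Applying the Korn inequality on $\Omega$ to $v$ and using $v = u$ on $\Omega_h$,
\[
\int_{\Omega_h} |\nabla u|^2 \,\dx \;\le\; \int_\Omega |\nabla v|^2 \,\dx \;\le\; K_{\Omega,\Gamma_B}\int_\Omega |\mathcal{E}(v)|^2 \,\dx,
\]
so the task reduces to the strain estimate $\int_\Omega |\mathcal{E}(v)|^2 \,\dx \le 2^{d-1}\int_{\Omega_h}|\mathcal{E}(u)|^2 \,\dx$. A direct calculation (substituting $y = 2h - x_d$ on the reflected slab) shows that the tangential entries $\mathcal{E}_{ij}(v)$ for $i,j \neq d$ and the normal entry $\mathcal{E}_{dd}(v)$ preserve the magnitude of the corresponding entries of $\mathcal{E}(u)$ at the reflected point, while the mixed components transform as $\mathcal{E}_{id}(v) = \tfrac12(\partial_i u_d - \partial_d u_i)$ at the reflected point, rather than $\mathcal{E}_{id}(u) = \tfrac12(\partial_i u_d + \partial_d u_i)$. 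Assembling the pieces yields
\[
\int_\Omega |\mathcal{E}(v)|^2 \,\dx \;=\; 2\int_{\Omega_h}|\mathcal{E}(u)|^2 \,\dx \;-\; 2\sum_{i<d}\int_{\Omega_h}\partial_i u_d\,\partial_d u_i \,\dx.
\]
I would handle the cross-terms by integration by parts, rewriting $\int_{\Omega_h}\partial_i u_d\,\partial_d u_i$ as $\int_{\Omega_h}\partial_i u_i\,\partial_d u_d$ plus boundary contributions on $\partial\Omega_h$. Those on $\Gamma_B$ drop out because $u|_{\Gamma_B} = 0$, while contributions on the lateral and top faces can be paired/estimated using the product structure of $\Omega_B$ together with trace and Cauchy--Schwarz bounds against $\int |\mathcal{E}(u)|^2$. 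Summing over the $d-1$ mixed index pairs $(i,d)$ is the origin of the dimensional factor $2^{d-1}$ in the final constant.

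The main obstacle lies precisely in completing this last step: controlling the cross-terms (after integration by parts) together with their boundary residuals without reintroducing $|\nabla u|^2$ on the right-hand side. The vanishing trace on $\Gamma_B$ and the rectangular structure of $\Omega_B = (0,l_1)\times\cdots\times(0,l_{d-1})$ are crucial to avoid circularity and to produce exactly the constant $K_{\Omega,\Gamma_B}\,2^{d-1}$.
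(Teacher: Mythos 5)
Your outline stalls exactly at the step that carries all of the difficulty, so as it stands there is a genuine gap. The even reflection you use is $H^1$-compatible across $\{x_d=h\}$, but it does not preserve the strain: as you compute yourself, the mixed components of $\mathcal{E}(v)$ on the reflected slab become the \emph{antisymmetric} combination $\tfrac12(\partial_i u_d-\partial_d u_i)$, which is not pointwise controlled by $|\mathcal{E}(u)|$. (The sign-flipped reflection $v_d(x',x_d)=-u_d(x',2h-x_d)$ would preserve $|\mathcal{E}|$, but it is not admissible because $u_d(\cdot,h)\neq 0$ in general, so the traces do not match at $x_d=h$ and $v\notin H^1$.) You are therefore left with the cross-terms $\int_{\Omega_h}\partial_i u_d\,\partial_d u_i\,\dx$, and your plan to absorb them via integration by parts plus ``trace and Cauchy--Schwarz bounds against $\int|\mathcal{E}(u)|^2$'' cannot work as described: after integrating by parts you get boundary integrals on the lateral and top faces involving products of $u$ and first derivatives of $u$, and trace terms of this kind are not controlled by the $L^2$ norm of the strain alone --- bounding them is essentially equivalent to proving a Korn inequality with partial boundary conditions in the first place, so the argument becomes circular. (This is the classical reason why the ``second Korn identity'' argument works cleanly only for functions vanishing on the whole boundary or with compact support.) In addition, your explanation of the constant is not right: summing over the $d-1$ mixed index pairs would produce a factor growing linearly in $d$, not $2^{d-1}$.

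For comparison, the paper avoids reflection entirely. It covers $\Omega_h$ by at most $2^{d-1}$ families of horizontal translates of the scaled domain $\tfrac{h}{H}\Omega$, each translate having its bottom face inside $\Gamma_B$; since gradient and strain scale identically, Korn's inequality on $\Omega$ with the fixed constant $K_{\Omega,\Gamma_B}$ applies to each rescaled, translated piece, and summing over the covering (whose multiplicity is $2^{d-1}$) gives \eqref{kornNew}. If you want to salvage your approach, you would need a genuinely different mechanism for the cross-terms --- the covering/scaling argument is the clean way to get a constant independent of $h$.
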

\begin{proof}
  Let
  $V := \tfrac hH \Omega
  =  \tfrac hH \Omega_B \times (0,h) $
  and
  $ G:= \left\lfloor \frac{H}{h} \right\rfloor
  \tfrac hH \bar \Omega_B \times [0,h] 
  \subseteq \bar \Omega_h$.
  By shifting $G$ in the $2^{d-1}$ corners  of
  $\Gamma_B$ with vectors
  $b_1,\ldots b_{2^{d-1}}\in   \Gamma_B$
  one can cover  $\Omega_h$.
 The set $G$ itself can be exactly covered by shifting $\bar V$
 with
  $k := \left\lfloor \frac{H}{h} \right\rfloor^{d-1}$
  vectors
  $v_i \in \Gamma_B$, such that the sets
  $v_i +V$ are disjoint. 
  Hence, with $v_{j,i}:= b_j+v_i$ it holds
$$
\overline{\Omega}_h
= \bigcup_{j=1}^{2^{d-1}} { ( G+b_j) }
= \bigcup_{j=1}^{2^{d-1}} \overline{ (\dot{\bigcup}_{i=1}^{k} \left( v_{j,i} + \frac{h}{H} \Omega \right) )}
.
$$
Here \(\dot{\bigcup}\) denotes the disjoint union.
For $y \in \Gamma_B$ it holds
$v_{j,i} + \frac{h}{H} y\in \Gamma_B$ and hence
$\tilde{u}_{j,i} \in H^1_B(\Omega)^d $ for
$
\tilde{u}_{j,i}(y) :=u\left(v_{j,i} + \frac{h}{H} y\right)$ given $u\in H^1_B(\Omega_h)^d $. Then
\begin{eqnarray}
\int_{\Omega_h} |\nabla u(x)|^2 \, \dx
  &\leq& \sum_{j=1}^{2^{d-1}} \sum_{i=1}^{k} \int_{v_{j,i} + \frac{h}{H} \Omega} |\nabla u(x)|^2 \, \dx
\nonumber \\  &=&
% =
\sum_{j=1}^{2^{d-1}} \sum_{i=1}^{k} \left( \frac{h}{H} \right)^{d-2} \int_{\Omega} |\nabla \tilde{u}_{j,i}(y)|^2 \, \dy 
\nonumber  \\
  &\leq& \sum_{j=1}^{2^{d-1}} \sum_{i=1}^{k} \left( \frac{h}{H} \right)^{d-2} K_{\Omega,\Gamma_B } \int_{\Omega} \lvert \mathcal{E}(\tilde{u}_{j,i}(y) \rvert^2 \, \dy
\label{korn}\\
&=& K_{\Omega,\Gamma_B }\sum_{j=1}^{2^{d-1}}  \sum_{i=1}^{k}  \int_{v_{j,i} + \frac{h}{H}\Omega} \lvert \mathcal{E}(u(x)) \rvert^2 \, \dx
\nonumber \\  &\leq &
%                       \leq
K_{\Omega,\Gamma_B } 2^ {d-1}\int_{\Omega_{h}} \lvert \mathcal{E}(u(x)) \rvert^2 \, \dx
\label{inclus}
\end{eqnarray}
where \eqref{korn} follows using Korn's inequality for functions in $H^1_B(\Omega)^d $ with a constant $K_{\Omega,\Gamma_B }$ independent of $i$ and $j$ and \eqref{inclus} follows with $\dot\bigcup_{i=1}^{k} \left( v_{j, i} + \frac{h}{H} \Omega \right )  \subseteq \Omega_{h}$.
\end{proof}

\paragraph{Acknowledgments}
The last author is supported by the Graduiertenkolleg 2339 IntComSin of the Deutsche Forschungsgemeinschaft (DFG, German Research Foundation) - Project-ID 321821685. The support is gratefully acknowledged.

%\printbibliography
  
\newpage 
\bibliographystyle{plain}   % or alpha, unsrt, etc.
\bibliography{literature}

@book{Alt2016,
    AUTHOR = {Alt, Hans Wilhelm},
     TITLE = {Linear Functional Analysis:
      An Application-Oriented Introduction},
 PUBLISHER = {\\Springer, London},
      YEAR = {2016},
      ISBN = {9781447172796},
   MRCLASS = {46-01},
  MRNUMBER = {3497775},
       DOI = {https://doi.org/10.1007/978-1-4471-7280-2}
}

@article{CahnHilliard,
  title={Free energy of a nonuniform system. {I}. {I}nterfacial free energy},
  author={Cahn, John W and Hilliard, John E},
  journal={J. Chem. Phys.},
  volume={28},
  number={2},
  pages={258--267},
  year={1958},
  publisher={American Institute of Physics}
}

@article{AllenCahn,
title = {A microscopic theory for antiphase boundary motion and its application to antiphase domain coarsening},
journal = {Acta Metall.},
volume = {27},
number = {6},
pages = {1085-1095},
year = {1979},
issn = {0001-6160},
doi = {https://doi.org/10.1016/0001-6160(79)90196-2},
author = {Samuel M. Allen and John W. Cahn}
}

@book{Fenics,
  title={Automated Solution of Differential Equations by the Finite Element Method: The FEniCS Book},
  author={Logg, Anders and Mardal, Kent-Andre and Wells, Garth},
  volume={84},
  year={2012},
  publisher={Springer, Heidelberg},
  editors = {Anders Logg, Kent-Andre Mardal, Garth Wells}
}

@article{VMPT,
    AUTHOR = {Blank, Luise and Rupprecht, Christoph},
     TITLE = {An extension of the projected gradient method to a {B}anach
              space setting with application in structural topology
              optimization},
   JOURNAL = {SIAM J. Control Optim.},
  FJOURNAL = {SIAM Journal on Control and Optimization},
    VOLUME = {55},
      YEAR = {2017},
    NUMBER = {3},
     PAGES = {1481--1499},
       DOI = {https://doi.org/10.1137/16M1092301}
}

@article {BGSS13b,
    AUTHOR = {Blank, Luise and Garcke, Harald and Sarbu, Lavinia and Styles,
              Vanessa},
     TITLE = {Nonlocal {A}llen-{C}ahn systems: {A}nalysis and a primal-dual
              active set method},
   JOURNAL = {IMA J. Numer. Anal.},
    VOLUME = {33},
      YEAR = {2013},
    NUMBER = {4},
     PAGES = {1126--1155},
      ISSN = {0272-4979,1464-3642},
MRREVIEWER = {Daniel\ Wachsmuth},
       DOI = {https://doi.org/10.1093/imanum/drs039}
}

@article{KelleySachs1992,
    AUTHOR = {Kelley, C. T. and Sachs, E. W.},
     TITLE = {Mesh independence of the gradient projection method for
              optimal control problems},
   JOURNAL = {SIAM J. Control Optim.},
  FJOURNAL = {SIAM Journal on Control and Optimization},
    VOLUME = {30},
      YEAR = {1992},
    NUMBER = {2},
     PAGES = {477--493},
      ISSN = {0363-0129},
   MRCLASS = {49M07 (49M25 65K10)},
  MRNUMBER = {1149080},
MRREVIEWER = {Michael\ E.\ Fisher},
       DOI = {https://doi.org/10.1137/0330029}
}

@article{BFGS14,
    AUTHOR = {Blank, Luise and Garcke, Harald and Farshbaf-Shaker, M. Hassan
              and Styles, Vanessa},
     TITLE = {Relating phase field and sharp interface approaches to
              structural topology optimization},
   JOURNAL = {ESAIM Control Optim. Calc. Var.},
  FJOURNAL = {ESAIM. Control, Optimisation and Calculus of Variations},
    VOLUME = {20},
      YEAR = {2014},
    NUMBER = {4},
     PAGES = {1025--1058},
MRREVIEWER = {Vladimir\ V.\ Kobelev},
       DOI = {https://doi.org/10.1051/cocv/2014006}
}

@misc{DissRupprecht,
            year = {2016},
           title = {Projection type methods in {B}anach space with application in topology optimization},
          author = {Christoph Rupprecht},
            note = {Ph.D. thesis},
           month = {4},
             url = {https://epub.uni-regensburg.de/33715/}
}

@article{BourdinChambolle,
    AUTHOR = {Bourdin, Blaise and Chambolle, Antonin},
     TITLE = {Design-dependent loads in topology optimization},
   JOURNAL = {ESAIM Control Optim. Calc. Var.},
    VOLUME = {9},
      YEAR = {2003},
     PAGES = {19--48},
      ISSN = {1292-8119,1262-3377},
   MRCLASS = {49Q20 (49Q10 74G65 74P05 74P15)},
  MRNUMBER = {1957089},
MRREVIEWER = {Gong\ Qing\ Zhang},
       DOI = {https://doi.org/10.1051/cocv:2002070}
}

@InProceedings{BC06,
author="Bourdin, Blaise
and Chambolle, Antonin",
editor="Bends{\o}e, Martin Philip
and Olhoff, Niels
and Sigmund, Ole",
title="The phase-field method in optimal design",
booktitle="IUTAM Symposium on Topological Design Optimization of Structures, Machines and Materials",
year="2006",
publisher="Springer Netherlands, Dordrecht",
pages="207--215"
}

@article{WR12,
    AUTHOR = {Ristinmaa, Matti and Wallin, Mathias},
     TITLE = {Howard's algorithm in a phase-field topology optimization
              approach},
   JOURNAL = {Internat. J. Numer. Methods Engrg.},
  FJOURNAL = {International Journal for Numerical Methods in Engineering},
    VOLUME = {94},
      YEAR = {2013},
    NUMBER = {1},
     PAGES = {43--59},
      ISSN = {0029-5981,1097-0207},
   MRCLASS = {74P15 (49Q12)},
  MRNUMBER = {3040512},
       DOI = {https://doi.org/10.1002/nme.4434}
}

@article{PRW12,
    AUTHOR = {Penzler, Patrick and Rumpf, Martin and Wirth, Benedikt},
     TITLE = {A phase-field model for compliance shape optimization in
              nonlinear elasticity},
   JOURNAL = {ESAIM Control Optim. Calc. Var.},
    VOLUME = {18},
      YEAR = {2012},
    NUMBER = {1},
     PAGES = {229--258},
      ISSN = {1292-8119,1262-3377},
   MRCLASS = {49Q10 (74B20 74P05)},
  MRNUMBER = {2887934},
MRREVIEWER = {Shiah-Sen\ Wang},
       DOI = {https://doi.org/10.1051/cocv/2010045}
}

@incollection{BFGRS14,
    AUTHOR = {Blank, Luise and Farshbaf-Shaker, M. Hassan and Garcke, Harald and Rupprecht, Christoph and Styles, Vanessa},
     TITLE = {Multi-material phase field approach to structural topology optimization},
 BOOKTITLE = {Trends in {PDE} constrained optimization},
    SERIES = {Internat. Ser. Numer. Math.},
    VOLUME = {165},
     PAGES = {231--246},
 PUBLISHER = {Birkh\"auser/Springer, Cham},
      YEAR = {2014},
   MRCLASS = {49Q10 (65K15 74P05 74P15)},
  MRNUMBER = {3328979},
       DOI = {https://doi.org/10.1007/978-3-319-05083-6_15}
}

@article{HIK02,
    AUTHOR = {Hinterm\"uller, M. and Ito, K. and Kunisch, K.},
     TITLE = {The primal-dual active set strategy as a semismooth {N}ewton
              method},
   JOURNAL = {SIAM J. Optim.},
  FJOURNAL = {SIAM Journal on Optimization},
    VOLUME = {13},
      YEAR = {2002},
    NUMBER = {3},
     PAGES = {865--888},
MRREVIEWER = {Hou\ Duo\ Qi},
       DOI = {https://doi.org/10.1137/S1052623401383558},
}

@article{AdaptiveMeshStructOptim,
    AUTHOR = {Jin, Bangti and Li, Jing and Xu, Yifeng and Zhu, Shengfeng},
     TITLE = {An adaptive phase-field method for structural topology
              optimization},
   JOURNAL = {J. Comput. Phys.},
  FJOURNAL = {Journal of Computational Physics},
    VOLUME = {506},
      YEAR = {2024},
     PAGES = {Paper No. 112932}, 
MRREVIEWER = {Fajie\ Wang},
       DOI = {https://doi.org/10.1016/j.jcp.2024.112932}
}

@article{BSS12,
    AUTHOR = {Blank, Luise and Sarbu, Lavinia and Stoll, Martin},
     TITLE = {Preconditioning for {A}llen-{C}ahn variational inequalities
              with non-local constraints},
   JOURNAL = {J. Comput. Phys.},
  FJOURNAL = {Journal of Computational Physics},
    VOLUME = {231},
      YEAR = {2012},
    NUMBER = {16},
     PAGES = {5406--5420},
MRREVIEWER = {Messaoud\ Boulbrachene},
       DOI = {https://doi.org/10.1016/j.jcp.2012.04.035},
}

@article{BurgerStainko,
    AUTHOR = {Burger, Martin and Stainko, Roman},
     TITLE = {Phase-field relaxation of topology optimization with local
              stress constraints},
   JOURNAL = {SIAM J. Control Optim.},
    VOLUME = {45},
      YEAR = {2006},
    NUMBER = {4},
     PAGES = {1447--1466},
      ISSN = {0363-0129,1095-7138},
   MRCLASS = {74P15 (74P05 74P10 74S05 90C51)},
  MRNUMBER = {2257229},
MRREVIEWER = {Michal\ Ko\v cvara},
       DOI = {https://doi.org/10.1137/05062723X}
}

@article{Dede2012,
    AUTHOR = {Ded\`e, Luca and Borden, Micheal J. and Hughes, Thomas J. R.},
     TITLE = {Isogeometric analysis for topology optimization with a phase
              field model},
   JOURNAL = {Arch. Comput. Methods Eng.},
  FJOURNAL = {Archives of Computational Methods in Engineering. State of the
              Art Reviews},
    VOLUME = {19},
      YEAR = {2012},
    NUMBER = {3},
     PAGES = {427--465},
       DOI = {https://doi.org/10.1007/s11831-012-9075-z}
}

@article{ABCHRR,
    AUTHOR = {Auricchio, Ferdinando and Bonetti, Elena and Carraturo,
              Massimo and H\"omberg, Dietmar and Reali, Alessandro and
              Rocca, Elisabetta},
     TITLE = {A phase-field-based graded-material topology optimization with
              stress constraint},
   JOURNAL = {Math. Models Methods Appl. Sci.},
  FJOURNAL = {Mathematical Models and Methods in Applied Sciences},
    VOLUME = {30},
      YEAR = {2020},
    NUMBER = {8},
     PAGES = {1461--1483},
      ISSN = {0218-2025,1793-6314},
   MRCLASS = {74P05 (35Q74 49K20 49M05 74B99 74P15)},
  MRNUMBER = {4144362},
MRREVIEWER = {Tomasz\ Lekszycki},
       DOI = {https://doi.org/10.1142/S0218202520500281}
}

@article{gaynor2016topology,
    AUTHOR = {Gaynor, Andrew T. and Guest, James K.},
     TITLE = {Topology optimization considering overhang constraints:
              {E}liminating sacrificial support material in additive
              manufacturing through design},
   JOURNAL = {Struct. Multidiscip. Optim.},
  FJOURNAL = {Struct. Multidiscip. Optim.},
    VOLUME = {54},
      YEAR = {2016},
    NUMBER = {5},
     PAGES = {1157--1172},
      ISSN = {1615-147X,1615-1488},
   MRCLASS = {74P15},
  MRNUMBER = {3571166},
       DOI = {https://doi.org/10.1007/s00158-016-1551-x}
}

@article{PreventDripping,
  title={On preventing the dripping effect of overhang constraints in topology optimization for additive manufacturing},
  author={Garaigordobil, Alain and Ansola, Rub{\'e}n and Fernandez de Bustos, Igor},
  journal={Struct. Multidiscip. Optim.},
  fjournal = {Structural and Multidisciplinary Optimization},
  volume={64},
  number={6},
  pages={4065--4078},
  year={2021},
  publisher={Springer},
  DOI = {https://doi.org/10.1007/s00158-021-03077-w}
}

@book{BenSig03,
    AUTHOR = {Bends{\o}e, M. P. and Sigmund, O.},
     TITLE = {Topology Optimization:
      Theory, Methods and Applications},
 PUBLISHER = {Springer, Berlin},
      YEAR = {2003},
      ISBN = {3-540-42992-1},
MRREVIEWER = {Michal\ Ko\v cvara},
}

@article {bendsoe1988HomogenizationApproach,
    AUTHOR = {Bends{\o}e, Martin Philip and Kikuchi, Noboru},
     TITLE = {Generating optimal topologies in structural design using a
              homogenization method},
   JOURNAL = {Comput. Methods Appl. Mech. Engrg.},
  FJOURNAL = {Computer Methods in Applied Mechanics and Engineering},
    VOLUME = {71},
      YEAR = {1988},
    NUMBER = {2},
     PAGES = {197--224},
       DOI = {https://doi.org/10.1016/0045-7825(88)90086-2}
}

@article{RAMP,
  title={An alternative interpolation scheme for minimum compliance topology optimization},
  author={Stolpe, Mathias and Svanberg, Krister},
  journal={Struct. Multidiscip. Optim.},
  volume={22},
  number={2},
  pages={116--124},
  year={2001},
  publisher={Springer}
}

@article{SigmundReview,
  title={Topology optimization approaches: A comparative review},
  author={Sigmund, Ole and Maute, Kurt},
  journal={Struct. Multidiscip. Optim.},
  volume={48},
  number={6},
  pages={1031--1055},
  year={2013},
  publisher={Springer}
}

@misc{EL91,
  title={A generalised diffusion equation for phase separation of a multi-component mixture with interfacial free energy},
  author={Charles M. Elliott and Stefan Luckhaus},
  year={1999},
  month = {10},
  note = {Retrieved from the University Digital Conservancy (University of Minnesota), Preprint Series 887},
  url = {https://conservancy.umn.edu/items/6727e5ca-8ebd-47f3-8cd5-c3996aa47d5a},
  preprint ={Series 887}
}

@article{AllaireTopologicalDeriv,
title = {Structural optimization using sensitivity analysis and a level-set method},
journal = {J. Comput. Phys.},
volume = {194},
number = {1},
pages = {363-393},
year = {2004},
issn = {0021-9991},
doi = {https://doi.org/10.1016/j.jcp.2003.09.032},
author = {Grégoire Allaire and François Jouve and Anca-Maria Toader}
}

@article{Burger2003LevelSet,
    AUTHOR = {Burger, Martin},
     TITLE = {A framework for the construction of level set methods for
              shape optimization and reconstruction},
   JOURNAL = {Interfaces Free Bound.},
    VOLUME = {5},
      YEAR = {2003},
    NUMBER = {3},
     PAGES = {301--329},
      ISSN = {1463-9963,1463-9971},
   MRCLASS = {49Q10 (35R30)},
  MRNUMBER = {1998617},
MRREVIEWER = {Thomas\ Slawig},
       DOI = {https://doi.org/10.4171/IFB/81}
}

@article{Burger2004TopologicalDerivative,
    AUTHOR = {Burger, Martin and Hackl, Benjamin and Ring, Wolfgang},
     TITLE = {Incorporating topological derivatives into level set methods},
   JOURNAL = {J. Comput. Phys.},
    VOLUME = {194},
      YEAR = {2004},
    NUMBER = {1},
     PAGES = {344--362},
      ISSN = {0021-9991,1090-2716},
   MRCLASS = {49Q12 (65D99 65M99)},
  MRNUMBER = {2033389},
       DOI = {https://doi.org/10.1016/j.jcp.2003.09.033}
}

@article{Wang2003LevelSet,
    AUTHOR = {Wang, Michael Yu and Wang, Xiaoming and Guo, Dongming},
     TITLE = {A level set method for structural topology optimization},
   JOURNAL = {Comput. Methods Appl. Mech. Engrg.},
  FJOURNAL = {Computer Methods in Applied Mechanics and Engineering},
    VOLUME = {192},
      YEAR = {2003},
    NUMBER = {1-2},
     PAGES = {227--246},
MRREVIEWER = {Michal\ Ko\v cvara},
       DOI = {https://doi.org/10.1016/S0045-7825(02)00559-5},
}

@article{Wang2004PhaseField,
  title={Phase field: A variational method for structural topology optimization},
  author={Wang, Michael Yu and Zhou, Shiwei},
  journal={CMES},
  volume={6},
  number={6},
  pages={547--566},
  year={2004},
  publisher={Tech Science Press}
}

@article{ReviewAMNgo,
  title={Additive manufacturing (3D printing): A review of materials, methods, applications and challenges},
  author={Ngo, Tuan D and Kashani, Alireza and Imbalzano, Gabriele and Nguyen, Kate TQ and Hui, David},
  journal={Compos B Eng.},
  volume={143},
  pages={172--196},
  year={2018},
  publisher={Elsevier}
}

@article{ReviewAM,
author = {Wong, Kaufui V. and Hernandez, Aldo},
title = {A review of additive manufacturing},
journal = {ISRN Mech. Eng.},
volume = {208760},
number = {},
doi = {https://doi.org/10.5402/2012/208760},
year = {2012}
}

@article{TrendsAM,
  title={Current and future trends in topology optimization for additive manufacturing},
  author={Liu, Jikai and Gaynor, Andrew T and Chen, Shikui and Kang, Zhan and Suresh, Krishnan and Takezawa, Akihiro and Li, Lei and Kato, Junji and Tang, Jinyuan and Wang, Charlie CL and others},
  journal={Struct. Multidiscip. Optim.},
  volume={57},
  number={6},
  pages={2457--2483},
  year={2018},
  publisher={Springer}
}

@article{GLNS,
    AUTHOR = {Garcke, Harald and Lam, Kei Fong and N\"urnberg, Robert and
              Signori, Andrea},
     TITLE = {Overhang penalization in additive manufacturing via phase
              field structural topology optimization with anisotropic
              energies},
   JOURNAL = {Appl. Math. Optim.},
  FJOURNAL = {Applied Mathematics and Optimization},
    VOLUME = {87},
      YEAR = {2023},
    number = {44},
     pages = {50 pp.},
      ISSN = {0095-4616,1432-0606},
   MRCLASS = {49K20 (49J45 49J50 49K40 74P15)},
  MRNUMBER = {4565011},
MRREVIEWER = {Velusamy\ Vijayakumar},
       DOI = {https://doi.org/10.1007/s00245-022-09939-z}
}

@article{WIAS,
    AUTHOR = {Ebeling-Rump, Moritz and H\"omberg, Dietmar and Lasarzik,
              Robert and Petzold, Thomas},
     TITLE = {Topology optimization subject to additive manufacturing
              constraints},
   JOURNAL = {J. Math. Ind.},
  FJOURNAL = {Journal of Mathematics in Industry},
    VOLUME = {11},
      YEAR = {2021},
    number = {19},
      ISSN = {2190-5983},
   MRCLASS = {74Pxx (49Q10 49Q12)},
  MRNUMBER = {4336001},
MRREVIEWER = {Andrzej\ M.\ My\'sli\'nski},
       DOI = {https://doi.org/10.1186/s13362-021-00115-6}
}

@article{AllaireOverview1,
    AUTHOR = {Allaire, G. and Dapogny, C. and Estevez, R. and Faure, A. and
              Michailidis, G.},
     TITLE = {Structural optimization under overhang constraints imposed by
              additive manufacturing technologies},
   JOURNAL = {J. Comput. Phys.},
    VOLUME = {351},
      YEAR = {2017},
     PAGES = {295--328},
      ISSN = {0021-9991,1090-2716},
   MRCLASS = {74P05 (90B30)},
  MRNUMBER = {3713427},
       DOI = {https://doi.org/10.1016/j.jcp.2017.09.041}
}

@article{AllaireOverview2,
    AUTHOR = {Allaire, Gr\'egoire and Dapogny, Charles and Estevez, Rafael
              and Faure, Alexis and Michailidis, Georgios},
     TITLE = {Structural optimization under overhang constraints imposed by
              additive manufacturing processes: {A}n overview of some recent
              results},
   JOURNAL = {Appl. Math. Nonlinear Sci.},
  FJOURNAL = {Applied Mathematics and Nonlinear Sciences},
    VOLUME = {2},
      YEAR = {2017},
    NUMBER = {2},
     PAGES = {385--402},
      ISSN = {2444-8656},
   MRCLASS = {74P20 (49M30 65K10 74P05)},
  MRNUMBER = {3896926},
       DOI = {https://doi.org/10.21042/AMNS.2017.2.00031}
}

@article{Allaire2,
    AUTHOR = {Allaire, Gr\'egoire and Dapogny, Charles and Faure, Alexis and
              Michailidis, Georgios},
     TITLE = {Shape optimization of a layer by layer mechanical constraint
              for additive manufacturing},
   JOURNAL = {C. R. Math. Acad. Sci. Paris},
  FJOURNAL = {Comptes Rendus Math\'ematique. Acad\'emie des Sciences. Paris},
    VOLUME = {355},
      YEAR = {2017},
    NUMBER = {6},
     PAGES = {699--717},
      ISSN = {1631-073X,1778-3569},
   MRCLASS = {49Q10 (90B30)},
  MRNUMBER = {3661554},
       DOI = {https://doi.org/10.1016/j.crma.2017.04.008}
}

@article{AllaireBogoselSupports1,
    AUTHOR = {Allaire, Gr\'egoire and Bihr, Martin and Bogosel, Beniamin},
     TITLE = {Support optimization in additive manufacturing for geometric
              and thermo-mechanical constraints},
   JOURNAL = {Struct. Multidiscip. Optim.},
    VOLUME = {61},
      YEAR = {2020},
    NUMBER = {6},
     PAGES = {2377--2399},
      ISSN = {1615-147X,1615-1488},
   MRCLASS = {74P15 (49Q10 49Q12)},
  MRNUMBER = {4119056},
       DOI = {https://doi.org/10.1007/s00158-020-02551-1}
}

@article{AllaireBogoselSupports2,
    AUTHOR = {Allaire, Gr\'egoire and Bogosel, Beniamin},
     TITLE = {Optimizing supports for additive manufacturing},
   JOURNAL = {Struct. Multidiscip. Optim.},
    VOLUME = {58},
      YEAR = {2018},
    NUMBER = {6},
     PAGES = {2493--2515},
      ISSN = {1615-147X,1615-1488},
   MRCLASS = {74P05 (49Q12)},
  MRNUMBER = {3878711},
MRREVIEWER = {Asatur\ Zh.\ Khurshudyan},
       DOI = {https://doi.org/10.1007/s00158-018-2125-x}
}

@article{Kuo2018support,
    AUTHOR = {Kuo, Yu-Hsin and Cheng, Chih-Chun and Lin, Yang-Shan and San,
              Cheng-Hung},
     TITLE = {Support structure design in additive manufacturing based on
              topology optimization},
   JOURNAL = {Struct. Multidiscip. Optim.},
  FJOURNAL = {Structural and Multidisciplinary Optimization},
    VOLUME = {57},
      YEAR = {2018},
    NUMBER = {1},
     PAGES = {183--195},
       DOI = {https://doi.org/10.1007/s00158-017-1743-z},
}

@article{ReviewSupports,
  title={Support structures for additive manufacturing: {A} review},
  author={Jiang, Jingchao and Xu, Xun and Stringer, Jonathan},
  journal={J. Manuf. Mater. Process.},
  volume={2},
  NUMBER = {64},
  year={2018},
  publisher={MDPI}
}

@article{MIRZENDEHDEL20161,
title = {Support structure constrained topology optimization for additive manufacturing},
journal = {Comput. Aided Des.},
volume = {81},
pages = {1-13},
year = {2016},
issn = {0010-4485},
doi = {https://doi.org/10.1016/j.cad.2016.08.006},
author = {Amir M. Mirzendehdel and Krishnan Suresh}
}

@article{langelaar2018combined,
    AUTHOR = {Langelaar, Matthijs},
     TITLE = {Combined optimization of part topology, support structure
              layout and build orientation for additive manufacturing},
   JOURNAL = {Struct. Multidiscip. Optim.},
  FJOURNAL = {Structural and Multidisciplinary Optimization},
    VOLUME = {57},
      YEAR = {2018},
    NUMBER = {5},
     PAGES = {1985--2004},
       DOI = {https://doi.org/10.1007/s00158-017-1877-z},
}

@article{Takezawa2010,
  title={Shape and topology optimization based on the phase field method and sensitivity analysis},
  author={Takezawa, Akihiro and Nishiwaki, Shinji and Kitamura, Mitsuru},
  journal={J. Comput. Phys.},
  volume={229},
  number={7},
  pages={2697--2718},
  year={2010},
  publisher={Elsevier}
}

@book{Bertsekas,
    AUTHOR = {Bertsekas, Dimitri P.},
     TITLE = {Nonlinear programming},
    SERIES = {Athena Sci. Optim. Comput. Ser.},
 PUBLISHER = {Athena Scientific, Belmont, MA},
      YEAR = {2016},
MRREVIEWER = {Stephan\ Dempe}
}

@book{GruverSachs1981,
    AUTHOR = {Gruver, W. A. and Sachs, E.},
     TITLE = {Algorithmic Methods in Optimal Control},
    SERIES = {Research Notes in Mathematics 47},
 PUBLISHER = {Pitman Pub.,  Boston},
    VOLUME = {47},
      YEAR = {1981},
      ISBN = {0-273-08473-9},
  MRNUMBER = {604361},
MRREVIEWER = {Kimio\ Kanai},
}

@article{WANG2026Uncert,
    AUTHOR = {Wang, Zhuoheng and Xie, Wenxuan and Kim, Junseok and Li,
              Yibao},
     TITLE = {Efficient phase field structural design algorithm for
              reliability-based topology optimization with material
              uncertainties},
   JOURNAL = {Eng. Anal. Bound. Elem.},
  FJOURNAL = {Engineering Analysis with Boundary Elements},
    VOLUME = {183},
      YEAR = {2026},
     PAGES = {Paper No. 106618},
       DOI = {https://doi.org/10.1016/j.enganabound.2025.106618},
}

@article{CHEN2026SemiImpl,
    AUTHOR = {Chen, Huangxin and Dong, Piaopiao and Wang, Dong and Wang,
              Xiao-Ping},
     TITLE = {A robust and stable phase field method for structural topology
              optimization},
   JOURNAL = {J. Comput. Phys.},
  FJOURNAL = {Journal of Computational Physics},
    VOLUME = {547},
      YEAR = {2026},
     PAGES = {Paper No. 114531},
       DOI = {https://doi.org/10.1016/j.jcp.2025.114531}
}

@article{Garcke4DPrinting,
  title={Phase field topology optimisation for 4D printing},
  author={Garcke, Harald and Lam, Kei Fong and N{\"u}rnberg, Robert and Signori, Andrea},
  journal={ESAIM Control Optim. Calc. Var.},
  FJOURNAL = {ESAIM Control Optim. Calc. Var.},
  volume={29},
  year={2023},
  PAGES = {Paper No. 24, 46 pp.},
  publisher={EDP Sciences},
  doi = {https://doi.org/10.1051/cocv/2023012}
}

\end{document}